\newtheorem{theorem}{Theorem}[section]
\newtheorem{proposition}[theorem]{Proposition}
\newtheorem{lemma}[theorem]{Lemma}
\newtheorem{corollary}[theorem]{Corollary}
\theoremstyle{definition}
\theoremstyle{remark}
\newtheorem{remark}[theorem]{Remark}
\numberwithin{equation}{section}
\begin{document}

\title{Non-isothermal non-Newtonian flow problem   with heat convection
and  Tresca's friction law}


\author{Mahdi Boukrouche}
\address{Lyon University, F-42023 Saint-Etienne, Institut Camille Jordan CNRS UMR 5208, 23 Rue Paul Michelon, 42023 Saint-Etienne Cedex 2, France}
\email{mahdi.boukrouche@univ-st-etienne.fr}

\author{Hanene Debbiche}
\address{Lyon University, F-42023 Saint-Etienne, Institut Camille Jordan CNRS UMR 5208, 23 Rue Paul Michelon, 42023 Saint-Etienne Cedex 2, France}
\curraddr{University of Bordj bou Arr\'eridj, Algeria}
\email{hanane.debbiche@gmail.com}

\author{Laetitia Paoli}
\address{Lyon University, F-42023 Saint-Etienne, Institut Camille Jordan CNRS UMR 5208, 23 Rue Paul Michelon, 42023 Saint-Etienne Cedex 2, France}
\email{laetitia.paoli@univ-st-etienne.fr}

\subjclass[2010]{Primary: 76A05, 35Q35; Secondary: 35Q79, 35J87, 49J40}

\keywords{Incompressible non-Newtonian fluids, heat transfer, Tresca's friction law,
 existence and regularity result}



\begin{abstract}
We consider an incompressible non-isothermal fluid flow with non-linear slip boundary conditions governed by Tresca's friction law. We assume that the stress tensor is given as $\sigma = 2 \mu\bigl( \theta, u, | D(u) |) |D(u) |^{p-2} D(u) -  \pi {\rm Id}$ where $\theta$ is the temperature, $\pi$ is the pressure, $u$ is the velocity and $D(u)$ is the strain rate tensor of the fluid while $p$ is a real parameter. The problem is thus given by the $p$-Laplacian Stokes system with subdifferential type boundary conditions coupled to a $L^1$ elliptic equation describing the heat conduction in the fluid. We establish first an existence result for a family of approximate coupled problems where the $L^1$ coupling term in the heat equation is replaced by a bounded one depending on a parameter $0<\delta <<1$, by using a fixed point technique. Then we pass to the limit as $\delta$ tends to zero and we prove the existence of a solution $(u, \pi, \theta)$ to our original coupled problem in Banach spaces depending on $p$ for any $p > 3/2$.
\end{abstract}

\maketitle



\section{Introduction} \label{section1}

Many industrial applications, like lubrication or extrusion / injection problems, involve complex fluids like molten polymers, oils, or colloidal fluids which do not satisfy the usual linear Newton's relationship between the stress tensor,  the strain rate tensor and the pressure and obey rather a power law of the form
\begin{eqnarray*}
\sigma = 2 \mu \bigl( \theta, u, \bigl| D(u) \bigr|) \bigl|D(u) \bigr|^{p-2} D(u) -  \pi {\rm Id}
\end{eqnarray*}
 where $\theta$ is the temperature, $\pi$ is the pressure, $u$ is the velocity and $D(u)$ is the strain rate tensor of the fluid while $p$ is a real parameter. With $p>2$ we obtain a description of dilatant (or shear thickening) fluids like colloidal fluids and for $p \in (1,2)$ we recover pseudo-plastic (or shear thinning) fluids like  molten polymers (see \cite{barnes, mewis, wagner} for instance).  It follows that both the velocity and the pressure of the fluid will belong to Banach spaces depending on $p$. For a constant function $\mu$ and classical boundary conditions like Dirichlet, Neumann or Navier boundary conditions several existence, uniqueness and regularity results have been already established (see  \cite{Lions69, malek} and the references therein or more recently \cite{berselli, diening} for unsteady flows).
 
When $p=2$ we obtain a generalization of Navier-Stokes Newtonian fluids since the viscosity still depends on the temperature, the velocity and the modulus of the strain rate tensor via  an appropriate choice of the function $\mu$ and it allows to consider fluids like oils (see \cite{ladi} for instance).

\smallskip
 
Nevertheless several experimental studies have shown that  such flows exhibit also non-standard  behavior at the boundary with non-linear slip phenomena of friction type (see for instance   \cite{magnin, Hervet2003}).
The first mathematical results  with such boundary conditions have  been proposed by H.Fujita et al in \cite{fujita1, F1, F2, F3, saito1, F6, saito2} for stationnary Stokes flows and developped later on for  Navier-Stokes flows when $p=2$ (\cite{Roux2, Roux3}). 

\smallskip

Since friction generates heat,  thermal effects can not be neglected and   the energy conservation law leads to  a elliptic equation in $L^1$ for the temperature. Moreover,  the convection term yields to some compatibility condition between the regularity of the temperature and velocity of the fluid which both will belong to Banach spaces depending on the parameter $p$.

\smallskip

The rest of this paper is devoted to the study of this highly non-linear  coupled fluid flow / heat transfer problem and to the proof of an existence result.
In Section \ref{section2} we introduce the functional framework and 
starting from the conservation laws of mass, momentum and energy, we derive the mathematical formulation of the problem as a variational inequality describing the fluid flow coupled to an elliptic equation in $L^1$ describing  heat conduction into the fluid. In Section \ref{section3} we study an auxiliary flow problem where the two first arguments of the viscosity mapping are considered as given parameters. Next in Section \ref{section4} we introduce an approximate linearized heat equation depending on a parameter $\delta$, with $0<\delta <<1$, reminiscent of the technique proposed in \cite{Bocca2} for elliptic equations with $L^1$ right-hand side. We establish uniform estimates of the solutions to the auxiliary decoupled  fluid flow / heat transfer problems in appropriate Banach spaces. Then in Section \ref{section5} we prove the existence of a solution to an approximate coupled problem depending on the parameter $\delta$ by using a fixed point technique and finally we pass to the limit as $\delta$ tends to zero to obtain a solution $(u, \pi, \theta)$ to our original coupled problem.


\bigskip


\section{Description of the problem} \label{section2}

Motivated by lubrication and extrusion / injection phenomena we consider an incompressible  fluid flow in the domain
 \begin{eqnarray*}
 \Omega=\Bigl\{(x' , x_{3}) \in \mathbb{R}^{2}\times \mathbb{R} :  \, x' \in \omega, \,  0< x_{3} < h(x')\Bigr\},
 \end{eqnarray*} 
 where $\omega$ is a non empty  bounded  domain of $\mathbb{R}^{2}$ with a Lipschitz continuous boundary 
 and $h$ is a Lipschitz continuous function  which is bounded from above and from below by some positive real numbers.

 In the stationnary case the conservation of mass and momentum leads to the system
 \begin{eqnarray} \label{NS}
\left\{ \begin{array}{ll}
 - {\rm div} (\sigma) = f \quad \hbox{\rm in $\Omega$} \\
 {\rm div} (u) = 0 \quad \hbox{\rm in $\Omega$}
 \end{array} \right.
 \end{eqnarray}
 where $u$ is the velocity of the fluid, $\sigma$ is the stress tensor and $f$ represents the vector of external forces. In order to be able to consider a broad class of fluids like oils, polymers or colloidal fluids we introduce the following   very general constitutive law depending on a real parameter $p \in (1, + \infty)$
 \begin{eqnarray} \label{sigma}
 \sigma = 2 \mu \bigl( \theta, u, \bigl| D(u) \bigr| \bigr) \bigl|D(u) \bigr|^{p-2} D(u) - \pi {\rm Id}_{{\mathbb{R}^3}}
 \end{eqnarray}
 where $\mu$ is a given function, $\theta$ is the temperature, $\pi$ is the pressure and $D(u) = \bigl(d_{ij}(u) \bigr)_{1\le i,j \le 3}$ is the strain rate tensor given by
 \begin{eqnarray*}
 d_{ij}(u) =\frac{1}{2} \left( \frac{\partial u_{i}}{\partial x_{j}} +\frac{\partial u_{j}}{\partial   x_{i}}\right) 
\quad  1 \le i,j \le 3.
\end{eqnarray*}
With $p>2$ we recover dilatant (or shear thickening) fluids like colloidal fluids and $p \in (1,2)$ corresponds to pseudo-plastic (or shear thinning) fluids like molten polymers.

We assume that the fluid is subjected to non-homogeneous Dirichlet boundary conditions on a part of $\partial \Omega$ and to non-linear slip boundary conditions of friction type on the other part. More precisely we decompose $\partial \Omega$ as  $\partial\Omega = \Gamma_{0}\cup\Gamma_{L}\cup\Gamma_{1}$ 
 with 
 \begin{eqnarray*}
 \Gamma_{0} =\Bigl\{(x' , x_{3}) \in \overline{\Omega} :  \, x_{3} =0 \Bigr\}, \quad 
  \Gamma_{1}=\Bigl\{(x' , x_{3}) \in \overline{\Omega} :  \, x_{3} = h(x') \Bigr\}
 \end{eqnarray*} 
and   $\Gamma_{L}$ is the lateral part of $\partial \Omega$. 

We introduce  a function  $g : \partial\Omega \to \mathbb{R}^{3}$  such that
 \begin{eqnarray}\label{sdf}
\int_{\Gamma_{L}} g\cdot n\,dY = 0,\quad g=0 \mbox{ on }\Gamma_{1}, \quad g\cdot n=0 \mbox{  on  } \Gamma_{0}, 
 \quad g\neq0  \mbox{ on } \Gamma_{L},         
\end{eqnarray}
where  $n= (n_{1} , n_{2} , n_{3} )$  is the unit outward normal vector to   $\partial\Omega$.
We denote here $v\cdot w$ the Euclidian inner product in  ${\mathbb R}^3$ of two vectors $v$ 
and $w$ and by $|\cdot|$ the Euclidian norm. 
We define by $u_{n}= u\cdot n$ and  $u_{\tau} = u- u_{n} n $ 
the normal and the tangential velocities on $\partial\Omega$. The  normal and tangential components  of the stress vector on $\partial \Omega$ are given by $\sigma_n$ and $\sigma_{\tau}$ with 
\begin{eqnarray*}
\displaystyle \sigma_n = \sum_{i,j=1}^3 \sigma_{ij} n_{j}n_{i}, \quad
\displaystyle \sigma_{\tau} =\left( \sum_{j=1}^3 \sigma_{ij} n_{j} -\sigma_{n}n_{i} \right)_{1 \le i \le 3}.
\end{eqnarray*}

As usual in lubrication or extrusion / injection problems the upper part of the boundary is fixed while the lower part is moving with a velocity $s: \Gamma_0 \to {\mathbb R}^2 \times \{0 \}$. So we have 
\begin{eqnarray}\label{7}
u=0  \quad \mbox{on}\    \Gamma_{1}, \quad u=  g \quad\mbox{on}\   \Gamma_{L}
\end{eqnarray}
and the slip condition 
\begin{equation}\label{8}
u_{n}=0 \quad \mbox{on} \ \Gamma_{0}
\end{equation}
combined with a subdifferential condition for the relative velocity $u_{\tau} -s$ on $\Gamma_0$ given by 
\begin{eqnarray*} \label{8bis}
u_{\tau} - s \in - \partial \psi_{{\overline B}_{{\mathbb R}^3} (0, k)} (\sigma_{\tau} )
\end{eqnarray*}
where $k: \Gamma_0 \to {\mathbb R}_*^+$ is the friction threshold, $\psi_{{\overline B}_{{\mathbb R}^3} (0, k)}$ is the indicatrix function of the closed ball ${\overline B}_{{\mathbb R}^3} (0, k)$ and $\partial \psi_{{\overline B}_{{\mathbb R}^3} (0, k)}$ its subdifferential (see \cite{rock}). This inclusion  means that the shear stress $\sigma_{\tau}$ remains bounded by $k$ and  slip occurs only when the threshold $k$ is reached, with a relative velocity in the opposite direction to $\sigma_{\tau}$ i.e. $u_{\tau}$ is unknown and satisfies Tresca's friction law \cite{Duvaut-Lions72}:
\begin{equation}\label{tr}
\left .\begin{array}{l}
|\sigma_{\tau}|<k\Rightarrow  u_{\tau}=s  \\
 |\sigma_{\tau}|=k\Rightarrow\exists\lambda\geq0\quad u_{\tau}= s-\lambda 
\sigma_{\tau}\\ 
\end{array}\right\}\quad \mbox{on} \ \Gamma_{0}.
\end{equation} 

Since friction generates heat we have to take also into account thermal effects in the fluid. By using Fourier's law for the heat flux we get the following heat equation 
\begin{eqnarray}\label{TEM1}
u\cdot \nabla\theta - div(K \nabla \theta) 
= 2\mu \bigl(\theta,u , \bigl|D(u)  \bigr| \bigr) \bigl|D(u) \bigr|^p +r(\theta) \quad \mbox{in} \ \Omega 
\end{eqnarray}
where $K$ is the thermal conductivity tensor and $r$ is a real function. We assume mixed Dirichlet-Neumann boundary conditions on $\Gamma_1 \cup \Gamma_L$ and $\Gamma_0$ i.e.
\begin{eqnarray}\label{TEM3}
\theta=0\quad \mbox{on}\  \Gamma_{1}\cup \Gamma_{L},  \quad 
( K\nabla \theta) \cdot n  =\theta^{b} \quad \mbox{on}\  \Gamma_{0}
\end{eqnarray}
where   $\theta^{b} $  is a given heat flux on  $\Gamma_{0}$.

\bigskip

Let us introduce now the functional framework. We   denote here and throughout this paper  by $ \textbf{X} $  the functional space $ X^3 $ and we define 
\begin{eqnarray*}
W^{1, q}_{ \Gamma_{1}\cup \Gamma_{L}}(\Omega)
=\Bigl\{\varphi \in W^{1, q}(\Omega)  : \  \varphi=0  \mbox{ on}\,\Gamma_{1}\cup \Gamma_{L} \Bigr\} \quad \forall q \ge 1
\end{eqnarray*}
endowed with the norm
\begin{eqnarray*}
\Vert \varphi \Vert_{1.q} =\left( \int_{\Omega} \bigl| \nabla \varphi \bigr|^q \, dx \right)^{1/q} \quad \forall \varphi \in W^{1, q}_{ \Gamma_{1}\cup \Gamma_{L}}(\Omega)
\end{eqnarray*}
and for all $p>1$
\begin{eqnarray*}
V_{0}^{p}=\Big\{\varphi\in \textbf {W}^{1,p}_{\Gamma_{1}\cup 
\Gamma_{L}}(\Omega)  : \ 
    \varphi\cdot n=0 \, \mbox{ on }\,\Gamma_{0}\Big\},
 \quad 
    V_{0.div}^{p}=\Bigl\{\varphi\in V_{0}^{p} : \  
{\rm div} (\varphi)=0  \,\mbox{ in  }\,\Omega \Bigr\} .
\end{eqnarray*}
Let $p>1$. We assume that $g$ admits an extension $G$ to $\Omega$ such that
 \begin{eqnarray}\label{eqG} 
 G\in \textbf{W}^{1,p}(\Omega) ,  \quad    
{\rm div } (G)=0  \mbox{ in } \Omega,\quad G=0 \mbox{ on } \Gamma_{1},\quad G\cdot n=0 \mbox{ on } \Gamma_{0} 
  \end{eqnarray}  
 and that the data $f$, $s$ and $k$ satisfy
 \begin{eqnarray} \label{eqfk}
 f \in {\bf L}^{p'} (\Omega), \quad s \in {\bf L}^p(\Gamma_0), \quad k \in L^{p'}_+ (\Gamma_0) 
 \end{eqnarray}
 where $\displaystyle p' = \frac{p}{p-1}$ is the conjugate number of $p$. Moreover we assume that the mapping $\mu$ satisfies:
 \begin{eqnarray}\label{rop}
  (o,e,d)\mapsto\mu (o,e,d)\quad \mbox{is continuous on } 
  \mathbb{R}\times\mathbb{R}^{3}\times \mathbb{R}_{+},
\end{eqnarray}
 \begin{eqnarray}\label{m5}
 d\mapsto \mu(.,.,d) \quad \mbox{is monotone  increasing   on } 
 \mathbb{R_{+}},
\end{eqnarray} 
\begin{eqnarray}\label{mlo}
\hbox{\rm there exists  $(\mu_0, \mu_1) \in {\mathbb R}^2$ s.t. } \  0<\mu_{0}\leq\mu(o,e,d)\leq\mu_{1}
\quad \hbox{\rm for all  $(o,e,d)\in \mathbb{R}\times\mathbb{R}^{3}\times\mathbb{R}_{+}$}.
\end{eqnarray}
 For all $\theta \in L^q(\Omega)$ with $q \ge 1$ we let
 \begin{eqnarray*}
 a(\theta;  \upsilon , \varphi)=\int _{\Omega} \mathcal{F} \bigl(\theta, \upsilon +G , D(\upsilon +G) \bigr)
: D(\varphi)dx \quad \forall (\upsilon, \varphi) \in \bigl( V_{0}^{p} \bigr)^2
 \end{eqnarray*}
 where  $\mathcal{F}:  \mathbb{R}\times\mathbb{R}^{3}\times\mathbb{R}^{3\times3}\rightarrow \mathbb{R}^{3\times3}$  
is defined by 
\begin{eqnarray}\label{F}
\left\{ \begin{array}{ll}
\displaystyle \mathcal{F}(\lambda_{0}, \lambda_{1},\lambda_{2})= 2\mu(\lambda_{0} ,\lambda_{1},|\lambda_{2}
|)|\lambda_{2}|^{p-2}\lambda_{2} \quad \hbox{\rm if }  \lambda_2 \not = 0_{\mathbb{R}^{3\times3}},  \\
\displaystyle 
\mathcal{F}(\lambda_{0}, \lambda_{1},\lambda_{2})= 0_{\mathbb{R}^{3\times3}} \quad {\rm otherwise.}
\end{array} \right.
\end{eqnarray}
With  (\ref{mlo}) we have immediately 
\begin{eqnarray*}\label{F1}
  |\mathcal{F}(\lambda_{0}, \lambda_{1},\lambda_{2})|\leq 2\mu_{1}|\lambda_{2}|^{p-1}  
  \quad \forall (\lambda_{0}, \lambda_{1},\lambda_{2})\in 
  \mathbb{R}\times\mathbb{R}^{3}\times\mathbb{R}^{3\times3}.
\end{eqnarray*}
So $\mathcal{F}$ is continuous on $\mathbb{R}\times\mathbb{R}^{3}\times\mathbb{R}^{3\times3}$ and the mapping $a (\theta; \cdot, \cdot)$ is well defined and continuous on $V_{0}^{p} \times V_{0}^{p}$ for all $p>1$.

Then we introduce the new unknown function $\upsilon = u - G$ in order to deal with homogeneous boundary conditions on $\Gamma_{1}\cup\Gamma_{L}$ and  the flow problem (\ref{NS})-(\ref{sigma}) with  the boundary conditions (\ref{7})-(\ref{8})-(\ref{tr}) admits the following variational formulation
(\cite{Duvaut-Lions72}):
 
 \smallskip
 
\noindent  Find  $ \upsilon\in V_{0.div}^{p}$ and  $\pi \in 
L_{0}^{p'}(\Omega)$ such that
\begin{eqnarray*}\label{jh}
a(\theta;\upsilon , \varphi-\upsilon)- \int_{\Omega} \pi {\rm div} (\varphi) \, dx 
+\Psi(\varphi)-\Psi(\upsilon)\geq \int_{\Omega} f\cdot
(\varphi-\upsilon)\,dx 
\quad \forall\varphi\in V_{0}^{p}
\end{eqnarray*} 
with 
\begin{eqnarray*}
 \Psi(\varphi)=\int _{\Gamma_{0}}k|\varphi+G-s|\,dx' \quad  \forall\varphi\in V_{0}^{p}.
\end{eqnarray*}

\bigskip

Next we observe that the first term in the right hand side of the heat equation (\ref{TEM1}) belongs to $L^1(\Omega)$ whenever $u = \upsilon +G \in {\bf W}^{1,p}_{\Gamma_1} (\Omega)$. It follows that we may expect only $\theta \in W^{1,q}_{\Gamma_1 \cup \Gamma_L}(\Omega)$ with $1 \le q < 3/2$ and the convection term $u \cdot \nabla \theta$ in the left hand side of (\ref{TEM1}) belongs to $L^1(\Omega)$ only if $u \in {\bf L}^{p_*} (\Omega)$ with $p_* \ge q'$ where $q'$ is the conjugate number of $q$. Since ${\bf W}^{1,p}_{\Gamma_1} (\Omega) \subset {\bf L}^{p_*} (\Omega)$ with $\displaystyle p_* =  \frac{3p}{3-p}$ if $3>p>1$, any real $p_*$ if $p=3$ and $p_* = + \infty$ if $p >3$, we get $\displaystyle \frac{3}{2} > q \ge \frac{3p}{4p - 3}$ if $p<3$ which implies that $p$ should be greater that $\displaystyle \frac{3}{2}$. Thus we obtain a compatibility condition on $p$ and $q$ and we look for $\theta \in W^{1,q}_{\Gamma_1 \cup \Gamma_L}(\Omega)$ with 
\begin{eqnarray} \label{compa1}
\begin{array}{ll}
\displaystyle q \in \left[ 1, \frac{3}{2} \right) \quad \hbox{\rm if $p>3$,} \quad 
\displaystyle q \in \left( 1, \frac{3}{2} \right) \quad \hbox{\rm if $p=3$} \quad \\
\displaystyle \hbox{\rm and } \quad q \in \left[ \frac{3p}{4p -3} , \frac{3}{2} \right) \quad \hbox{\rm if $\displaystyle \  \frac{3}{2}<p<3$}
\end{array}
\end{eqnarray}
or equivalently
\begin{eqnarray} \label{compa2}
p >3 \quad  \hbox{\rm if $q=1$ and} \quad  p \ge \frac{3q}{4q -3}  \quad  \hbox{\rm if $\displaystyle q \in \left( 1, \frac{3}{2} \right)$.}
\end{eqnarray}

\begin{remark} We may observe that $\displaystyle t \mapsto \frac{3t}{4t-3}$ is a decreasing function of $t$ on $[1, + \infty)$.
So, as  expected intuitively,  the more regular the temperature, the less regular the fluid velocity.
\end{remark}

As usual we assume that the thermal conductivity tensor satisfies
   \begin{eqnarray} \label{TEM2}
   K \in \bigl( L^{\infty} (\Omega) \bigr)^{3 \times 3}
   \end{eqnarray}
and
\begin{eqnarray} \label{TEM2bis}
\begin{array}{ll}
\displaystyle    \hbox{\rm there exists $ k_{0} >0$ s.t. } \\
\displaystyle   \sum_{i,j=1}^3 K_{i j} (x) \xi_i \xi_j \ge  k_{0} \sum_{i=1}^{3} |\xi_i|^2 \quad \hbox{\rm for all} \  \xi \in {\mathbb R}^3, \quad  \hbox{\rm for a.e.} \ x \in \Omega.
\end{array}
  \end{eqnarray}
  We assume also that
  \begin{eqnarray} \label{Cr}
  \hbox{\rm the mapping $r : {\mathbb R} \to {\mathbb R}$ is  continuous and uniformly bounded}
  \end{eqnarray}
  and 
  \begin{eqnarray} \label{thetab}
  \theta^b \in L^1 (\Gamma_0).
  \end{eqnarray}
Then we can derive a weak formulation of the heat equation (\ref{TEM1}) with the boundary conditions (\ref{TEM3}) by choosing a test-function  $w \in C^1 ({\overline  \Omega})$ such that $w=0$ on $\Gamma_1 \cup \Gamma_L$ (see for instance \cite{Boccardo-Neumann-boundary}).

\bigskip

Finally for any $p>3/2$ and $\displaystyle q \in \bigl[1, 3/2 \bigr)$ satisfying (\ref{compa1}) the weak formulation of the coupled heat-flow problem (\ref{NS})-(\ref{sigma})-(\ref{7})-(\ref{8})-(\ref{tr}) and (\ref{TEM1})-(\ref{TEM3}) is given by

 \bigskip
 
\noindent{\bf Problem (P)}   $ \quad $ Find  $ \upsilon\in V_{0.div}^{p}$,   $\pi \in 
L_{0}^{p'}(\Omega)$ and $\theta \in W^{1, q}_{ \Gamma_{1}\cup \Gamma_{L}}(\Omega)$ such that
\begin{eqnarray*}\label{jh}
a(\theta;\upsilon , \varphi-\upsilon)- \int_{\Omega} \pi {\rm div}(\varphi) \, dx
+\Psi(\varphi)-\Psi(\upsilon)\geq \int_{\Omega} f \cdot
(\varphi-\upsilon)\,dx,
\quad \forall\varphi\in V_{0}^{p}
\end{eqnarray*} 
and 
\begin{eqnarray*}
\begin{array}{ll}
\displaystyle  \int _{\Omega} \bigl((\upsilon+G) \cdot\nabla\theta \bigr)w\,dx
+ \int _{\Omega} (K \nabla \theta) \cdot \nabla w\,dx  \\
\displaystyle 
= \int _{\Omega}2 \mu \bigl(\theta , \upsilon +G, \bigl|D(\upsilon +G) \bigr| \bigr)
\bigl|D(\upsilon +G) \bigr|^p w \,dx 
 \\
\displaystyle 
+\int _{\Omega} r(\theta ) w \,dx + \int_{\Gamma_{0}} \theta^{b} w \,dx' \quad \forall w \in C^1 ({\overline  \Omega}) \ {\rm s.t.}  \ w = 0 \ {\rm on} \ \Gamma_1 \cup \Gamma_L.
\end{array}
\end{eqnarray*}


\section{Fluid flow auxiliary problem} \label{section3}

In this section we consider the following auxiliary flow problem
\begin{eqnarray} \label{flow_aux}
\left\{
\begin{array}{ll}
\displaystyle \hbox{\rm Find $\tilde \upsilon \in V_{0.div}^{p}$ such that} \\
\displaystyle 
\bigl\langle \mathbf{A}_{\theta, \upsilon} (\tilde \upsilon) , {\varphi} - \tilde \upsilon \bigr\rangle
+\Psi(\varphi)-\Psi(\tilde \upsilon) 
\geq \int_{\Omega} f\cdot (\varphi- \tilde \upsilon)\,dx 
\quad \forall \varphi\in V_{0.div}^{p}
\end{array}
\right.
\end{eqnarray}
where  $\langle \cdot, \cdot \rangle$ denotes the duality bracket between $ V_{0.div}^{p}$ and $\bigl(  V_{0.div}^{p} \bigr)'$ and  $\mathbf{A}_{\theta, \upsilon} :V_{0.div}^{p}\rightarrow(V_{0.div}^{p})'$ is the operator defined by  
 \begin{eqnarray*}
 \bigl\langle\mathbf{A}_{\theta, \upsilon} ( \tilde \upsilon) , {\varphi} \bigr\rangle 
 = \int_{\Omega} {\mathcal F} \bigl( \theta, \upsilon + G, D( \tilde \upsilon  +G) \bigr) : D(\varphi) \, dx \quad \forall (\tilde \upsilon , \varphi) \in   V_{0.div}^{p} \times V_{0.div}^{p}
 \end{eqnarray*}
for a given temperature $\theta \in L^q(\Omega)$ and a given velocity $\upsilon \in {\bf L}^p(\Omega)$, with $q \ge 1$ and $p>1$.

\begin{lemma} \label{lemma1}
Let $p>1$. Assume that   (\ref{m5})  and (\ref{mlo})     hold. Then the mapping  
$\lambda_{2}\mapsto\mathcal{F}(\cdot, \cdot ,\lambda_{2})$   
 is   monotone on $\mathbb{R}^{3\times3}$.
\end{lemma}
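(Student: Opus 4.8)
The plan is to verify the operator-monotonicity inequality
\[
\bigl(\mathcal{F}(\lambda_0,\lambda_1,\xi) - \mathcal{F}(\lambda_0,\lambda_1,\eta)\bigr) : (\xi - \eta) \ge 0
\qquad \forall\, \xi, \eta \in \mathbb{R}^{3\times3},
\]
with the first two arguments $\lambda_0 \in \mathbb{R}$, $\lambda_1 \in \mathbb{R}^3$ frozen (these are the slots abbreviated by $\cdot,\cdot$ in the statement). Throughout I write $a = |\xi|$, $b = |\eta|$ and $\mu_\xi = \mu(\lambda_0,\lambda_1,a)$, $\mu_\eta = \mu(\lambda_0,\lambda_1,b)$, so that $\mu_\xi, \mu_\eta \ge \mu_0 > 0$ by (\ref{mlo}). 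Since $\mathcal{F}(\lambda_0,\lambda_1,\cdot)$ vanishes at the origin and $|\xi|^{p-2}\xi$ has modulus $|\xi|^{p-1} \to 0$ as $\xi \to 0$ (because $p>1$), the degenerate cases $\xi = 0$ or $\eta = 0$ reduce to the same algebra, and I may assume $\xi, \eta \neq 0$.

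First I would expand the Frobenius inner product using $\xi : \xi = a^2$, $\eta : \eta = b^2$ and $\xi : \eta = \eta : \xi$, which by (\ref{F}) gives
\[
\bigl(\mathcal{F}(\lambda_0,\lambda_1,\xi) - \mathcal{F}(\lambda_0,\lambda_1,\eta)\bigr) : (\xi - \eta)
= 2\mu_\xi a^p + 2\mu_\eta b^p - 2\bigl(\mu_\xi a^{p-2} + \mu_\eta b^{p-2}\bigr)\,(\xi : \eta).
\]
The key step is to bound the cross term from above: since the coefficient $\mu_\xi a^{p-2} + \mu_\eta b^{p-2}$ is nonnegative and the Cauchy--Schwarz inequality gives $\xi : \eta \le ab$, I can replace $\xi : \eta$ by $ab$ at the cost of an inequality, thereby collapsing the matrix problem to a scalar one in $a$ and $b$. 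After regrouping the resulting terms this yields the lower bound
\[
\bigl(\mathcal{F}(\lambda_0,\lambda_1,\xi) - \mathcal{F}(\lambda_0,\lambda_1,\eta)\bigr) : (\xi - \eta)
\ge 2\,(a - b)\bigl(\mu_\xi\, a^{p-1} - \mu_\eta\, b^{p-1}\bigr).
\]

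It then remains to show that the right-hand side is nonnegative, i.e. that the scalar function $g : t \mapsto \mu(\lambda_0,\lambda_1,t)\,t^{p-1}$ is nondecreasing on $\mathbb{R}_+$, for then $a - b$ and $g(a) - g(b) = \mu_\xi a^{p-1} - \mu_\eta b^{p-1}$ always carry the same sign and their product is $\ge 0$. This monotonicity of $g$ is exactly where hypothesis (\ref{m5}) enters: $g$ is the product of the two nonnegative nondecreasing maps $t \mapsto \mu(\lambda_0,\lambda_1,t)$ (monotone increasing by (\ref{m5})) and $t \mapsto t^{p-1}$ (increasing on $\mathbb{R}_+$ since $p>1$), hence is itself nondecreasing. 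The one point deserving care is precisely this last reduction: without the monotonicity (\ref{m5}) of $\mu$ in its third argument the scalar inequality could fail, so the mere bounds (\ref{mlo}) would not be enough. All the rest is the familiar computation establishing monotonicity of the $p$-Laplacian map $\xi \mapsto |\xi|^{p-2}\xi$, here adapted to carry the variable coefficient $\mu$ through the Cauchy--Schwarz step.
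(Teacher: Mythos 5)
Your proof is correct, and it takes a genuinely different route from the paper's. The paper splits into two cases: for $p\ge 2$ it uses the weaker bound $2\lambda_2:\lambda_2' \le |\lambda_2|^2+|\lambda_2'|^2$, which collapses the expression to $(F_1-F_2)(|\lambda_2|^2-|\lambda_2'|^2)$ and therefore requires the monotonicity of $t\mapsto \mu(\cdot,\cdot,t)\,t^{p-2}$ — this is exactly why that case needs $p\ge 2$; for $1<p<2$ it instead invokes the polarization identity $2\lambda_2:\lambda_2'=|\lambda_2|^2+|\lambda_2'|^2-|\lambda_2-\lambda_2'|^2$, the triangle inequality, and a reduction to the ratio $t=|\lambda_2|/|\lambda_2'|$. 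You use the sharper Cauchy--Schwarz bound $\xi:\eta\le |\xi|\,|\eta|$ once, which reduces everything to the nonnegativity of $(a-b)\bigl(g(a)-g(b)\bigr)$ for the scalar map $g(t)=\mu(\lambda_0,\lambda_1,t)\,t^{p-1}$; since $g$ is a product of nonnegative nondecreasing functions for every $p>1$ (here (\ref{m5}) and (\ref{mlo}) enter), the case distinction disappears entirely. In fact your final scalar expression, written with $t=a/b$, is precisely the quantity $(t-1)(\alpha t^{p-1}-\beta)$ that the paper reaches at the end of its second case, so you arrive at the same endpoint by a shorter and uniform path. What the paper's longer route buys is reusable machinery: the polarization-identity decomposition is recycled almost verbatim in Theorem \ref{th1} to prove the \emph{quantitative} strong-monotonicity inequalities (\ref{ineg_case1}) and (\ref{inegp}) for the pure $p$-Laplacian term (needed there for uniqueness), and your Cauchy--Schwarz step discards exactly the $|\xi-\eta|^2$ information those estimates require; so your argument, while cleaner for the qualitative Lemma \ref{lemma1}, would not substitute for the computations later in the paper.
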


\begin{proof}
 Let us  prove now  that
\begin{eqnarray*}
\begin{array}{ll}
\displaystyle  \bigl(\mathcal{F}(\lambda_{0},\lambda_{1},\lambda_{2})-\mathcal{F}(\lambda_{0},\lambda_{1},\lambda_{2}
') \bigr):(\lambda_{2}-\lambda_{2}')\geq 0  \\
\displaystyle  
\forall (\lambda_{2},\lambda_{2}') \in (\mathbb{R}^{3\times 3})^2, \quad \forall (\lambda_0, \lambda_1) \in \mathbb{R} \times \mathbb{R}^3 .
\end{array}
\end{eqnarray*}

With (\ref{mlo}) the result is obvious if 
$\lambda_2 = 0_{\mathbb{R}^{3\times3}}$ and/or $\lambda'_2 = 0_{\mathbb{R}^{3\times3}}$.
So let us assume from now on that $\lambda_2 \not= 0_{\mathbb{R}^{3\times3}}$
and $\lambda'_{2} \not= 0_{\mathbb{R}^{3\times3}}$. 
We distinguish two cases.

\smallskip

\noindent {\bf Case 1:} $p \ge 2$.

\smallskip

Let 
\begin{eqnarray*}\label{expo}
F_{1}=\mu(\lambda_{0},\lambda_{1},|\lambda_{2}|)|\lambda_{2}|^{p-2},
\quad
 F_{2}=\mu(\lambda_{0},\lambda_{1},|\lambda_{2}'|)|\lambda_{2}'|^{p-2}.
 \end{eqnarray*}
 We have
\begin{eqnarray*}
\bigl( \mathcal{F}(\lambda_{0},\lambda_{1},\lambda_{2})-\mathcal{F}(\lambda_{0},\lambda_{1},\lambda_{2}
') \bigr):(\lambda_{2} -\lambda_{2}')   
= 2F_{1}|\lambda_{2}|^2-2 (F_{1} + F_2) \lambda_{2}:\lambda_{2}' +2F_{2}|\lambda_{2}'|^2.
\end{eqnarray*}
Observing that $F_1$ and $F_2$ are non-negative and  
$ 2 \lambda_{2}:\lambda_{2}'\leq |\lambda_{2}|^2 + |\lambda_{2}'|^2$, 
we get
\begin{eqnarray*}
&& \bigl( \mathcal{F}(\lambda_{0},\lambda_{1},\lambda_{2})-\mathcal{F}(\lambda_{0},\lambda_{1},\lambda_{2}
') \bigr):(\lambda_{2}-\lambda_{2}') \\
 && \geq 
2F_{1}|\lambda_{2}|^2-(F_{1}+F_{2}) \bigl( |\lambda_{2}|^2+|\lambda_{2}'|^2 \bigr)+2F_{2}
|\lambda_{2}'|^2
= 
(F_{1}-F_{2}) \bigl(|\lambda_{2}|^2-|\lambda_{2}'|^2 \bigr).
\end{eqnarray*}
and by replacing $F_1$ and $F_2$
\begin{eqnarray*}
&& \bigl( \mathcal{F}(\lambda_{0},\lambda_{1},\lambda_{2})-\mathcal{F}(\lambda_{0},\lambda_{1},\lambda_{2}
') \bigr):(\lambda_{2}-\lambda_{2}') \\
&&
\geq
 \bigl( \mu(\lambda_{0},\lambda_{1},
|\lambda_{2}|)|\lambda_{2}|^{p-2} -\mu(\lambda_{0},\lambda_{1},|\lambda_{2}'|)
|\lambda_{2}'|^{p-2} \bigr) \bigl(|\lambda_{2}|^2-|\lambda_{2}'|^2 \bigr)\\
&&=
 |\lambda_{2}|^{p-2} \big(\mu(\lambda_{0},\lambda_{1},|\lambda_{2}|)
 -\mu(\lambda_{0},\lambda_{1},|\lambda_{2}'|)\big)
 \big(|\lambda_{2}|^2-|\lambda_{2}'|^2\big) \\
 && 
 +\mu(\lambda_{0},\lambda_{1},|\lambda_{2}'|) \big(|\lambda_{2}|^{p-2}-|\lambda_{2}'|^{p-2}\big)
 \big(|\lambda_{2}|^2-|\lambda_{2}'|^2\big).
 \end{eqnarray*}
Finally, using   (\ref{m5}), (\ref{mlo}) and the monotonicity of the function $w\mapsto 
w^{p-2}$ on  $\mathbb{R}^{+}_*$ when  $p \geq 2$,  we infer that
\begin{eqnarray*} 
\bigl(\mathcal{F}(\lambda_{0},\lambda_{1},\lambda_{2})-\mathcal{F}(\lambda_{0},\lambda_{1},\lambda_{2}
') \bigr):(\lambda_{2}-\lambda_{2}')
\geq 0.
\end{eqnarray*}

\noindent {\bf Case 2:} $1<p<2$.

\smallskip

Let us denote now
\begin{eqnarray*}
\alpha=2\mu(\lambda_{0},\lambda_{1},|\lambda_{2}|),\quad 
\beta=2\mu(\lambda_{0},\lambda_{1},|\lambda_{2}'|).
\end{eqnarray*}
We have
\begin{eqnarray*}
\bigl(\mathcal{F}(\lambda_{0},\lambda_{1},\lambda_{2})-\mathcal{F}(\lambda_{0},\lambda_{1},\lambda_{2}
') \bigr):(\lambda_{2}-\lambda_{2}')
= \bigl(\alpha|\lambda_{2}|^{p-2}\lambda_{2} -\beta |\lambda_{2}'|^{p-2}\lambda_{2}' \bigr):(\lambda_{2}
 -\lambda_{2}')
\end{eqnarray*}
and using   
$
2\lambda_{2}:\lambda_{2}'=|\lambda_{2}|^2+|\lambda_{2}'|^2-|\lambda_{2}
-\lambda_{2}'|^{2}
$ we obtain
\begin{eqnarray*}
\begin{array}{ll}
\displaystyle \bigl(\mathcal{F}(\lambda_{0},\lambda_{1},\lambda_{2})-\mathcal{F}(\lambda_{0},\lambda_{1},\lambda_{2}
') \bigr):(\lambda_{2}-\lambda_{2}')
\\
\displaystyle =
\frac{1}{2}\Big( \bigl(\alpha|\lambda_{2}|^{p-2}
 +\beta |\lambda_{2}'|^{p-2} \bigr)|\lambda_{2}-\lambda_{2}'|^2+ \bigl(|\lambda_{2}|^2 
 -|\lambda_{2}'|^2 \bigr) \bigl(\alpha|\lambda_{2}|^{p-2}-\beta|\lambda_{2}'|^{p-2} \bigr)\Big).
\end{array}
\end{eqnarray*}
Without loss of generality we may assume that $|\lambda_2| \ge |\lambda'_2|>0$. 
With  the triangle inequality $|\lambda_{2}-\lambda_{2}'|\geq  |\lambda_{2}|-|\lambda_{2}'| \ge 0$, we get 
\begin{eqnarray*}
&& \bigl(\mathcal{F}(\lambda_{0},\lambda_{1},\lambda_{2})-\mathcal{F}(\lambda_{0},\lambda_{1},\lambda_{2}
') \bigr):(\lambda_{2}-\lambda_{2}') \\
&& \geq 
\frac{1}{2}\Big( \bigl(\alpha|\lambda_{2}|^{p-2} +\beta |\lambda_{2}'|^{p-2} \bigr)
\bigl(|\lambda_{2}| - |\lambda_{2}'| \bigr)^2+ \bigl(|\lambda_{2}|^2 
 -|\lambda_{2}'|^2 \bigr) \bigl(\alpha|\lambda_{2}|^{p-2}-\beta|\lambda_{2}'|^{p-2} \bigr)\Big) \\
 && = \frac{1}{2} \bigl(|\lambda_{2}| - |\lambda_{2}'| \bigr) \\
 && \qquad \times \Big( \bigl(\alpha|\lambda_{2}|^{p-2} +\beta |\lambda_{2}'|^{p-2}\bigr)
\bigl(|\lambda_{2}| - |\lambda_{2}'| \bigr)+ \bigl(|\lambda_{2}| 
 + |\lambda_{2}'| \bigr) \bigl(\alpha|\lambda_{2}|^{p-2}-\beta|\lambda_{2}'|^{p-2} \bigr)\Big).
 \end{eqnarray*}
Let  
$\displaystyle{t=\frac{|\lambda_{2}|}{|\lambda_{2}'|}\geq1}$.  Then
\begin{eqnarray*}
&&  (\mathcal{F}(\lambda_{0},\lambda_{1},\lambda_{2})-\mathcal{F}(\lambda_{0},\lambda_{1},\lambda_{2}
')):(\lambda_{2}-\lambda_{2}') \\
&& \geq  \frac{1}{2} |\lambda_{2}'|^{p} (t-1 ) \big( (\alpha t^{p-2} +\beta ) (t-1)
 +( t + 1 )(\alpha t^{p-2}-\beta )\big)\\
&& =  |\lambda_{2}'|^{p} (t-1 ) (\alpha t^{p-1} - \beta) 
 \end{eqnarray*}
 and from  (\ref{m5}) we obtain
\[0<\beta=2\mu(\lambda_{0},\lambda_{1},|\lambda_{2}'|)\leq2\mu(\lambda_{0},\lambda_{1},
|\lambda_{2}|)=\alpha.\]
It follows that $\alpha t^{p-1} \ge \alpha \ge \beta$ and finally
\begin{eqnarray*}
\bigl(\mathcal{F}(\lambda_{0},\lambda_{1},\lambda_{2})-\mathcal{F}(\lambda_{0},\lambda_{1},\lambda_{2}') \bigr):(\lambda_{2}-\lambda_{2}') \geq 0.
\end{eqnarray*}
\end{proof}

By using the monotonicity of ${\mathcal F}$  we obtain 

\begin{corollary}\label{lemma2}
Let  $\theta\in L^{q}(\Omega)$ with $q \ge 1$ and $\upsilon \in {\bf L}^p(\Omega)$ with $p>1$.
Assume  that  (\ref{eqG}), (\ref{m5})-(\ref{mlo}) hold. 
Then the operator  $\mathbf{A}_{\theta, \upsilon}$
is bounded  and  pseudo-monotone.   
\end{corollary}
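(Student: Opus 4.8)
The plan is to show that $\mathbf{A}_{\theta, \upsilon}$ is in fact \emph{monotone}, bounded and hemicontinuous, and then to invoke the classical fact that any bounded, monotone and hemicontinuous operator from a reflexive Banach space into its dual is pseudo-monotone (see \cite{Lions69}). Since $V_{0.div}^{p}$ is a closed subspace of $\mathbf{W}^{1,p}(\Omega)$ with $p>1$, it is a reflexive Banach space, and because its elements vanish on $\Gamma_{1}\cup\Gamma_{L}$, Korn's and Poincaré's inequalities show that $\varphi \mapsto \|D(\varphi)\|_{\mathbf{L}^{p}(\Omega)}$ is a norm on $V_{0.div}^{p}$ equivalent to $\|\cdot\|_{1.p}$; I will use this equivalence freely.

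\emph{Boundedness.} First I would bound, for $\tilde\upsilon,\varphi\in V_{0.div}^{p}$, using the growth estimate $|\mathcal{F}(\lambda_0,\lambda_1,\lambda_2)|\le 2\mu_1|\lambda_2|^{p-1}$ (a direct consequence of (\ref{mlo})) and Hölder's inequality with exponents $p'$ and $p$,
\[
\bigl|\langle\mathbf{A}_{\theta,\upsilon}(\tilde\upsilon),\varphi\rangle\bigr|
\le 2\mu_1\int_\Omega\bigl|D(\tilde\upsilon+G)\bigr|^{p-1}\bigl|D(\varphi)\bigr|\,dx
\le 2\mu_1\bigl\|D(\tilde\upsilon+G)\bigr\|_{\mathbf{L}^p}^{p-1}\bigl\|D(\varphi)\bigr\|_{\mathbf{L}^p}.
\]
Hence $\|\mathbf{A}_{\theta,\upsilon}(\tilde\upsilon)\|_{(V_{0.div}^{p})'}\le C\bigl(\|\tilde\upsilon\|_{1.p}+\|G\|_{1.p}\bigr)^{p-1}$, so $\mathbf{A}_{\theta,\upsilon}$ maps bounded sets to bounded sets.

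\emph{Monotonicity.} The key observation is that $\theta$ and $\upsilon+G$ enter $\mathbf{A}_{\theta,\upsilon}$ as fixed coefficients, independent of the argument $\tilde\upsilon$. Thus for $\tilde\upsilon_1,\tilde\upsilon_2\in V_{0.div}^{p}$, Lemma \ref{lemma1} applies pointwise with $\lambda_0=\theta(x)$, $\lambda_1=(\upsilon+G)(x)$, $\lambda_2=D(\tilde\upsilon_1+G)(x)$ and $\lambda_2'=D(\tilde\upsilon_2+G)(x)$, giving a.e. in $\Omega$
\[
\bigl(\mathcal{F}(\theta,\upsilon+G,D(\tilde\upsilon_1+G))-\mathcal{F}(\theta,\upsilon+G,D(\tilde\upsilon_2+G))\bigr):\bigl(D(\tilde\upsilon_1+G)-D(\tilde\upsilon_2+G)\bigr)\ge 0.
\]
Since $D(\tilde\upsilon_1+G)-D(\tilde\upsilon_2+G)=D(\tilde\upsilon_1-\tilde\upsilon_2)$, integrating over $\Omega$ yields $\langle\mathbf{A}_{\theta,\upsilon}(\tilde\upsilon_1)-\mathbf{A}_{\theta,\upsilon}(\tilde\upsilon_2),\tilde\upsilon_1-\tilde\upsilon_2\rangle\ge 0$, i.e. $\mathbf{A}_{\theta,\upsilon}$ is monotone.

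\emph{Hemicontinuity and conclusion.} Finally I would check hemicontinuity: for fixed $\tilde\upsilon,\psi,\varphi\in V_{0.div}^{p}$ the scalar map $t\mapsto\langle\mathbf{A}_{\theta,\upsilon}(\tilde\upsilon+t\psi),\varphi\rangle$ is continuous. For a.e. $x$ the continuity of $\mu$ (\ref{rop}) and the explicit form (\ref{F}) make $t\mapsto\mathcal{F}(\theta,\upsilon+G,D(\tilde\upsilon+t\psi+G)):D(\varphi)$ continuous, and for $t$ in a bounded interval the integrand is dominated by $2\mu_1\bigl(|D(\tilde\upsilon+G)|+(|t|+1)|D(\psi)|\bigr)^{p-1}|D(\varphi)|\in L^1(\Omega)$; dominated convergence then gives continuity in $t$. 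With $V_{0.div}^{p}$ reflexive, the combination bounded + monotone + hemicontinuous implies pseudo-monotonicity by the classical result. I expect the only mildly delicate point to be the uniform domination in the hemicontinuity step; everything else follows directly from Lemma \ref{lemma1}, the growth bound, and Korn's inequality. If instead a self-contained argument is preferred, the crux becomes the Minty-type step: from $\tilde\upsilon_n\rightharpoonup\tilde\upsilon$ and $\limsup_n\langle\mathbf{A}_{\theta,\upsilon}(\tilde\upsilon_n),\tilde\upsilon_n-\tilde\upsilon\rangle\le 0$, monotonicity forces $\langle\mathbf{A}_{\theta,\upsilon}(\tilde\upsilon_n),\tilde\upsilon_n-\tilde\upsilon\rangle\to 0$, after which testing along the segments $\tilde\upsilon+t(\varphi-\tilde\upsilon)$ and using hemicontinuity yields the required lower semicontinuity inequality.
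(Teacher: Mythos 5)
Your proposal is correct and follows essentially the route the paper itself intends: the paper's ``proof'' of Corollary \ref{lemma2} is a one-line citation to standard monotone-operator theory (Chapter 8, Theorem 8.9 in \cite{LeDret2013}), and your verification of boundedness (growth bound plus H\"older), monotonicity (Lemma \ref{lemma1} applied pointwise, exploiting that $\theta$ and $\upsilon+G$ are frozen coefficients), and hemicontinuity (dominated convergence), followed by the classical ``bounded + monotone + hemicontinuous implies pseudo-monotone'' theorem of \cite{Lions69}, is exactly the straightforward adaptation of standard results that the paper leaves to the reader. The only point worth noting is that your hemicontinuity step invokes the continuity assumption (\ref{rop}), which is omitted from the corollary's stated hypotheses but is a standing assumption on $\mu$ from Section \ref{section2} and is genuinely indispensable (a merely monotone, discontinuous $\mu$ would destroy pseudo-monotonicity), so your use of it is consistent with the paper's framework.
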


\begin{proof}
The proof is a straighforward adaptation of standard results (see for instance Chapter 8, Theorem 8.9 in  \cite{LeDret2013}) and is left to the reader.
\end{proof}

Next we prove an existence and uniqueness result for (\ref{flow_aux}).

 \begin{theorem} \label{th1} 
 Let  $\theta\in L^{q}(\Omega)$ with $q \ge 1$ and $\upsilon \in {\bf L}^p(\Omega)$ with $p>1$. 
Assume  that  (\ref{eqG})-(\ref{eqfk}), (\ref{m5})-(\ref{mlo}) hold. 
 Then problem (\ref{flow_aux}) admits an unique solution.
 \end{theorem}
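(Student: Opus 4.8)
The plan is to read (\ref{flow_aux}) as a variational inequality of the second kind on the reflexive Banach space $V_{0.div}^{p}$ (a closed subspace of $\mathbf{W}^{1,p}(\Omega)$, hence reflexive since $p>1$), governed by the bounded pseudo-monotone operator $\mathbf{A}_{\theta,\upsilon}$ of Corollary \ref{lemma2} together with the nonsmooth convex term $\Psi$. First I would record the structural properties of $\Psi$: since $t\mapsto|t|$ is convex and $k\ge 0$, and since the affine trace map $\varphi\mapsto(\varphi+G-s)|_{\Gamma_0}$ is continuous from $V_{0.div}^{p}$ into $\mathbf{L}^{p}(\Gamma_0)$, the functional $\Psi$ is convex, Lipschitz continuous (hence lower semicontinuous) and nonnegative; moreover $\Psi(0)=\int_{\Gamma_0}k|G-s|\,dx'<+\infty$ by H\"older's inequality and (\ref{eqfk}), so $\Psi$ is proper. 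Thus all the soft hypotheses of the abstract theory are available, and the genuine work is to establish coercivity and, separately, uniqueness.

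The key analytic step is coercivity. Writing $D(\tilde\upsilon)=D(\tilde\upsilon+G)-D(G)$ and using that $\mathcal{F}(\theta,\upsilon+G,M):M=2\mu(\cdot,\cdot,|M|)|M|^{p}$, the lower bound in (\ref{mlo}) together with the upper bound $|\mathcal{F}|\le 2\mu_1|\cdot|^{p-1}$ and H\"older's inequality gives
\begin{eqnarray*}
\bigl\langle \mathbf{A}_{\theta,\upsilon}(\tilde\upsilon),\tilde\upsilon\bigr\rangle
\ge 2\mu_0\,\|D(\tilde\upsilon+G)\|_{\mathbf{L}^{p}}^{p}
-2\mu_1\,\|D(\tilde\upsilon+G)\|_{\mathbf{L}^{p}}^{p-1}\,\|D(G)\|_{\mathbf{L}^{p}}.
\end{eqnarray*}
By Korn's inequality on $V_{0}^{p}$ (valid since its elements vanish on $\Gamma_1\cup\Gamma_L$, a subset of $\partial\Omega$ of positive measure), $\|D(\cdot)\|_{\mathbf{L}^{p}}$ is a norm equivalent to $\|\cdot\|_{V_{0}^{p}}$, so $\|D(\tilde\upsilon+G)\|_{\mathbf{L}^{p}}\to+\infty$ as $\|\tilde\upsilon\|_{V_{0}^{p}}\to+\infty$. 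Since $p>1$ the $p$-th power dominates the $(p-1)$-power correction coming from $G$ and the linear contribution of $f\in\mathbf{L}^{p'}(\Omega)\subset(V_{0.div}^{p})'$, whence
\begin{eqnarray*}
\frac{1}{\|\tilde\upsilon\|_{V_{0}^{p}}}\Bigl(\bigl\langle \mathbf{A}_{\theta,\upsilon}(\tilde\upsilon),\tilde\upsilon\bigr\rangle
-\int_{\Omega}f\cdot\tilde\upsilon\,dx+\Psi(\tilde\upsilon)-\Psi(0)\Bigr)\longrightarrow +\infty
\quad\hbox{\rm as }\|\tilde\upsilon\|_{V_{0}^{p}}\to+\infty,
\end{eqnarray*}
which is the coercivity required (relative to $v_0=0\in\mathrm{dom}\,\Psi$). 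With $V_{0.div}^{p}$ reflexive, $\mathbf{A}_{\theta,\upsilon}$ bounded, pseudo-monotone and coercive, and $\Psi$ proper convex lower semicontinuous, existence of a solution then follows from the standard surjectivity theory for variational inequalities governed by pseudo-monotone coercive operators (see e.g. \cite{Lions69, LeDret2013}).

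For uniqueness I would take two solutions $\tilde\upsilon_1,\tilde\upsilon_2$, insert $\varphi=\tilde\upsilon_2$ in the inequality for $\tilde\upsilon_1$ and $\varphi=\tilde\upsilon_1$ in that for $\tilde\upsilon_2$, and add: the $\Psi$ and $f$ contributions cancel, leaving $\bigl\langle \mathbf{A}_{\theta,\upsilon}(\tilde\upsilon_1)-\mathbf{A}_{\theta,\upsilon}(\tilde\upsilon_2),\tilde\upsilon_1-\tilde\upsilon_2\bigr\rangle\le 0$. Since $D(\tilde\upsilon_1)-D(\tilde\upsilon_2)=D(\tilde\upsilon_1+G)-D(\tilde\upsilon_2+G)$, Lemma \ref{lemma1} makes the integrand pointwise nonnegative, so the integral being $\le 0$ forces it to vanish almost everywhere. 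Here I would upgrade Lemma \ref{lemma1} to strict monotonicity by revisiting its proof: in Case $p\ge 2$ the expression equals $(F_1-F_2)(|\lambda_2|^2-|\lambda_2'|^2)+(F_1+F_2)|\lambda_2-\lambda_2'|^2$, whose second summand is strictly positive whenever $\lambda_2\ne\lambda_2'$; in Case $1<p<2$ the displayed bound $|\lambda_2'|^{p}(t-1)(\alpha t^{p-1}-\beta)$ is strict for $t>1$, while for $t=1$ one has $\alpha=\beta$ and the expression reduces to $\alpha|\lambda_2|^{p-2}|\lambda_2-\lambda_2'|^2>0$. Consequently $D(\tilde\upsilon_1+G)=D(\tilde\upsilon_2+G)$ a.e., i.e. $D(\tilde\upsilon_1-\tilde\upsilon_2)=0$, and Korn's inequality yields $\tilde\upsilon_1=\tilde\upsilon_2$.

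The two genuinely technical points, and hence the main obstacles, are the coercivity estimate — where one must absorb the inhomogeneous shift $G$ correctly and lean on Korn's inequality to convert $L^{p}$ control of the symmetric gradient into control of the full $V_{0}^{p}$ norm — and the upgrade of Lemma \ref{lemma1} from monotonicity to the strict monotonicity needed to pass from ``integrand vanishes a.e.'' to pointwise equality of the strain rates. Once these are in place, the appeal to the abstract existence theorem is routine.
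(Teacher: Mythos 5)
Your proposal is correct, and its existence half follows the paper essentially verbatim: the same properties of $\Psi$, the boundedness and pseudo-monotonicity of $\mathbf{A}_{\theta,\upsilon}$ from Corollary \ref{lemma2}, the same coercivity estimate absorbing the shift $G$ via Korn's inequality, and the appeal to the abstract theory of \cite{Lions69}. Where you genuinely diverge is uniqueness. The paper does \emph{not} upgrade Lemma \ref{lemma1} to strict monotonicity; instead it splits the viscosity as $\mu = \frac{\mu_0}{2} + \overline{\mu}$, writes $\mathcal{F} = \mathcal{F}_0 + \overline{\mathcal{F}}$ with $\mathcal{F}_0$ a constant-coefficient $p$-Laplacian nonlinearity, observes that both pieces are monotone so that the cross-testing inequality forces the $\mathcal{F}_0$-term to vanish (equation (\ref{ineg3})), and then invokes the quantitative strong-monotonicity inequalities (\ref{ineg_case1}) (for $p\ge 2$) and (\ref{inegp}) (for $1<p<2$) to convert that vanishing into $\Vert D(\tilde\upsilon_1-\tilde\upsilon_2)\Vert_{(L^p(\Omega))^{3\times 3}}=0$, followed by Korn. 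Your route — the exact identity giving the expression as $(F_1-F_2)\bigl(|\lambda_2|^2-|\lambda_2'|^2\bigr)+(F_1+F_2)|\lambda_2-\lambda_2'|^2$ for $p\ge2$, and for $1<p<2$ strictness of the bound $|\lambda_2'|^p(t-1)(\alpha t^{p-1}-\beta)$ when $t>1$ together with the separate case $t=1$, where the expression reduces to $\alpha|\lambda_2|^{p-2}|\lambda_2-\lambda_2'|^2$ — is a valid pointwise strict-monotonicity argument (the identity checks out, as do the degenerate cases where one matrix vanishes), and combined with a.e.-vanishing of the nonnegative integrand and Korn's inequality it gives uniqueness more directly and with noticeably less computation. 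What the paper's heavier route buys is the pair of quantitative inequalities and the $\mu_0$-splitting themselves: they are reused verbatim in Propositions \ref{prop4} and \ref{prop4bis} to get \emph{norm} estimates of $D(\tilde\upsilon_\delta^n - \tilde\upsilon_\delta)$ and $D(\upsilon_m - \tilde\upsilon)$ in terms of $\bigl\Vert \overline{\mathcal{F}}(\cdot)-\overline{\mathcal{F}}(\cdot)\bigr\Vert_{(L^{p'}(\Omega))^{3\times3}}$, i.e. the continuity of the fixed-point map and the strong convergence of the approximating velocities; strict monotonicity alone carries no modulus and would not suffice there. So your argument is the leaner proof of this particular theorem, while the paper's is an investment in machinery needed downstream.
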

 
  \begin{proof}
The functional  $\Psi$   is convex, proper and lower semi-continuous on $V_{0.div}^p$ and with Corollary \ref{lemma2} we already know that the operator  $ \mathbf{A}_{\theta, \upsilon}$   is bounded  and 
pseudo-monotone. Moreover $\mathbf{A}_{\theta, \upsilon}$ is coercive i.e there exists 
$\tilde \upsilon^{*} \in V_{0.div}^p $   such that 
$\Psi(\tilde \upsilon^{*})< + \infty$ 
and 
\[ \lim_{\Vert \tilde \upsilon \Vert_{1.p} \rightarrow 
+\infty}\frac{\bigl\langle\mathbf{A}_{\theta, \upsilon}(\tilde \upsilon), \tilde \upsilon-\tilde \upsilon^{*}
\bigr\rangle + \Psi(\tilde \upsilon)}
{\Vert \tilde \upsilon\Vert_{1.p}}= +\infty.\]
Indeed let us choose  $\tilde \upsilon^{*}=0$. We have 
\begin{eqnarray*}
\begin{array}{ll}
\displaystyle \bigl\langle\mathbf{A}_{\theta, \upsilon} \tilde \upsilon, \tilde \upsilon\rangle 
= \int_{\Omega} \mathcal{F} \bigl( \theta,\upsilon+G, D(\tilde \upsilon+G) \bigr):D(\tilde \upsilon+G)\,dx
\\
\displaystyle 
-\int_{\Omega} \mathcal{F} \bigl( \theta, \upsilon+G, D(\tilde \upsilon+G) \bigr):D(G)\,dx
\end{array}
\end{eqnarray*}
and with   (\ref{mlo})   
\begin{eqnarray} \label{estim1}
\begin{array}{ll}
\displaystyle \langle\mathbf{A}_{\theta, \upsilon} \tilde \upsilon , \tilde \upsilon \rangle 
 \geq 2\mu_{0} \int_{\Omega} \bigl|D(\tilde \upsilon +G) \bigr|^{p}\,dx 
 -2\mu_{1} \int_{\Omega} \bigl|D(\tilde \upsilon +G) \bigr|^{p-1}|D(G)|\,dx \\
  \displaystyle \qquad \qquad \geq 2\mu_{0} \Vert D( \tilde \upsilon+G) \Vert_{(L^p(\Omega))^{3 \times 3}}^{p}-2\mu_{1}\Vert 
\tilde \upsilon+G\Vert_{1.p}^{p-1}\Vert G\Vert_{1.p}\\
\displaystyle \geq  2\mu_{0} \big|\Vert D(\tilde \upsilon) \Vert_{(L^p(\Omega))^{3 \times 3}}-\Vert 
D(G)\Vert_{(L^p(\Omega))^{3 \times 3}}\big|^p 
- 2\mu_{1} \bigl(\Vert \tilde \upsilon\Vert_{1.p}+\Vert 
G\Vert_{1.p} \bigr)^{p-1}\Vert G\Vert_{1.p}.
\end{array}
\end{eqnarray}
So, whenever $\Vert  \tilde \upsilon \Vert _{1.p} \neq 0$
\begin{eqnarray*}\label{eq321} 
\begin{array}{ll}
\displaystyle \frac{\langle\mathbf{A}_{\theta, \upsilon} \tilde \upsilon, \tilde \upsilon\rangle}{\Vert \tilde \upsilon\Vert_{1.p}}
 \geq
 \Vert \tilde \upsilon\Vert_{1.p}^{p-1}
 \left( 2\mu_{0} \left| \frac{\Vert D(\tilde \upsilon) \Vert_{(L^p(\Omega))^{3 \times 3}} }{\Vert \tilde \upsilon\Vert_{1.p}} - \frac{\Vert D(G)\Vert_{(L^p(\Omega))^{3 \times 3}}}{\Vert \tilde \upsilon\Vert_{1.p}} \right|^{p} 
\right. \\
\displaystyle
\qquad \qquad \qquad \qquad \qquad 
\left. - \frac{2\mu_{1}}{\Vert \tilde \upsilon\Vert_{1.p}}
 \left( 1+ \frac{\Vert G\Vert_{1.p}}{\Vert \tilde \upsilon\Vert_{1.p}}\right)^{p-1}
 \Vert G\Vert_{1.p} \right).
\end{array}
\end{eqnarray*}

By Korn's  inequality (\cite{mosolov}), there exists  $C_{Korn}>0$  such that 
 \begin{eqnarray}\label{eq320}
\Vert D(u) \Vert_{(L^p(\Omega))^{3 \times 3}} = \left( \int_{\Omega} \bigl|D(u) \bigr|^p\,dx \right)^{\frac{1}{p}} \geq C_{Korn}\Vert u \Vert_{1.p}\quad \forall u\in V^p_0.
 \end{eqnarray}
Recalling that $\Psi(\tilde \upsilon) \ge 0$ for all $\tilde  \upsilon \in V_{0. div}^{p}$ and $p>1$ we get
\[
\lim_{\Vert \tilde \upsilon\Vert_{1.p} \rightarrow 
+\infty}\frac{\langle\mathbf{A}_{\theta, \upsilon} \tilde  \upsilon, \tilde  \upsilon\rangle + \Psi(\tilde  \upsilon)}
{\Vert \tilde  \upsilon\Vert_{1.p}}= +\infty.\]
Then by applying monotonicity arguments (see Chapter 2, Theorem 8.5 in \cite{Lions69}) we infer that problem (\ref{flow_aux}) admits at least a  solution.

\smallskip

Let us prove its uniqueness by a contradiction argument. So let $\tilde \upsilon_1 \in  V_{0.div}^{p}$ and $\tilde \upsilon_2 \in  V_{0.div}^{p}$ be two solutions of (\ref{flow_aux}). By choosing $\varphi = \tilde \upsilon_2$ then $\varphi =\tilde \upsilon_1$ we obtain
\begin{eqnarray} \label{ineg1}
\bigl\langle \mathbf{A}_{\theta, \upsilon} (\tilde \upsilon_1) - \mathbf{A}_{\theta, \upsilon} (\tilde \upsilon_2), \tilde \upsilon_1 - \tilde \upsilon_2 \bigr\rangle \le 0.
\end{eqnarray}
But 
\begin{eqnarray} \label{ineg2}
\begin{array}{ll}
\displaystyle \bigl\langle \mathbf{A}_{\theta, \upsilon} (\tilde \upsilon_1) - \mathbf{A}_{\theta, \upsilon} (\tilde \upsilon_2), \tilde \upsilon_1 - \tilde \upsilon_2 \bigr\rangle \\
\displaystyle
= \mu_0 \int_{\Omega} \bigl( \bigl|D( \tilde \upsilon_1 +G) \bigr|^{p-2} D(\tilde \upsilon_1+ G) 
- \bigl|D( \tilde \upsilon_2 +G) \bigr|^{p-2} D(\tilde \upsilon_2+ G) \Bigr) : D( \tilde \upsilon_1 - \tilde \upsilon_2) \, dx \\
\displaystyle 
+ \int_{\Omega} \Bigl( {\overline {\mathcal F}} \bigl( \theta, \upsilon +G, D(\tilde \upsilon_1  +G) \bigr)
- {\overline {\mathcal F}} \bigl( \theta, \upsilon + G , D(\tilde \upsilon_2  +G) \bigr) \Bigr)  : D( \tilde \upsilon_1 - \tilde \upsilon_2) \, dx
\end{array}
\end{eqnarray}
where 
\begin{eqnarray*}
{\overline{\mathcal{F}} } (\lambda_{0}, \lambda_{1},\lambda_{2})
= 2 {\overline \mu} \bigl(\lambda_{0} ,\lambda_{1},|\lambda_{2}| \bigr)|\lambda_{2}|^{p-2}\lambda_{2}
 \quad \hbox{\rm if }  \lambda_2 \not = 0_{\mathbb{R}^{3\times3}}, 
\quad {\overline{\mathcal{F}} } (\lambda_{0}, \lambda_{1},\lambda_{2})= 0_{\mathbb{R}^{3\times3}}
 \quad \hbox{\rm otherwise}
\end{eqnarray*}
and $\displaystyle {\overline \mu} = \mu - \frac{\mu_0}{2}$. Since ${\overline \mu}$ satisfies 
\begin{eqnarray*} 
\begin{array}{ll}
\displaystyle   d\mapsto {\overline \mu} (\cdot, \cdot,d) \quad \mbox{is monotone  increasing   on} 
\  \mathbb{R_{+}}, \\
\displaystyle  0<\frac{\mu_{0}}{2} \leq {\overline \mu} (o,e,d) \leq \mu_{1} - \frac{\mu_0}{2} 
\ \hbox{\rm for all} \  (o,e,d)\in \mathbb{R}\times\mathbb{R}^{3}\times\mathbb{R}_{+},
 \end{array}
\end{eqnarray*}
we infer with the same arguments as in Lemma \ref{lemma1} that $\lambda_2 \mapsto {\overline{\mathcal F}} (\cdot, \cdot, \lambda_2)$ is  monotone on $\mathbb{R}^{3\times3}$. Similarly we define 
\begin{eqnarray*}
{\mathcal{F}}_0 (\lambda_{0}, \lambda_{1},\lambda_{2})
=  \mu_0 |\lambda_{2}|^{p-2}\lambda_{2}
 \quad \hbox{\rm if }  \lambda_2 \not = 0_{\mathbb{R}^{3\times3}}, 
\quad {\mathcal{F}}_0 (\lambda_{0}, \lambda_{1},\lambda_{2})= 0_{\mathbb{R}^{3\times3}}
 \quad \hbox{\rm otherwise}
\end{eqnarray*}
and we obtain that $\lambda_2 \mapsto {\mathcal F}_0 (\cdot, \cdot, \lambda_2)$ is  monotone on $\mathbb{R}^{3\times3}$.
Hence 
\begin{eqnarray*}
\int_{\Omega} \Bigl( {\overline {\mathcal F}} \bigl( \theta, \upsilon +G, D(\tilde \upsilon_1  +G) \bigr)
- {\overline {\mathcal F}} \bigl( \theta, \upsilon + G, D(\tilde \upsilon_2  +G) \bigr) \Bigr)  : D( \tilde \upsilon_1 - \tilde \upsilon_2) \, dx \ge 0
\end{eqnarray*}
and 
\begin{eqnarray*}
\mu_0 \int_{\Omega} \bigl( \bigl|D( \tilde \upsilon_1 +G) \bigr|^{p-2} D(\tilde \upsilon_1+ G) 
- \bigl|D( \tilde \upsilon_2 +G) \bigr|^{p-2} D(\tilde \upsilon_2+ G) \Bigr) : D( \tilde \upsilon_1 - \tilde \upsilon_2) \, dx \ge 0.
\end{eqnarray*}
With (\ref{ineg1})-(\ref{ineg2}) we infer that
\begin{eqnarray} \label{ineg3}
 \int_{\Omega} \bigl( \bigl|D( \tilde \upsilon_1 +G) \bigr|^{p-2} D(\tilde \upsilon_1+ G) 
- \bigl|D( \tilde \upsilon_2 +G) \bigr|^{p-2} D(\tilde \upsilon_2+ G) \Bigr) : D( \tilde \upsilon_1 - \tilde \upsilon_2) \, dx =0
\end{eqnarray}
Let us distinguish now two cases.

\smallskip

{\bf Case 1:} $p \ge 2$.

\smallskip

For all $(\lambda, \lambda') \in {\mathbb R}^{3 \times 3} \times {\mathbb R}^{3 \times 3}$ we have
\begin{eqnarray} \label{ineg_case1}
\bigl( |\lambda|^{p-2} \lambda - | \lambda' |^{p-2} \lambda' ) : ( \lambda - \lambda' ) \ge \frac{1}{2^{p-1}}  |\lambda - \lambda' |^p .
\end{eqnarray}
Indeed if $\lambda = \lambda' =0_{{\mathbb R}^{3 \times 3}}$ the result is obvious. Otherwise with the same kind of computations as in Lemma \ref{lemma1} we get 
\begin{eqnarray*}
\begin{array}{ll}
\displaystyle \bigl( |\lambda|^{p-2} \lambda - | \lambda' |^{p-2} \lambda' ) : ( \lambda - \lambda' )  
\\
\displaystyle 
= \frac{1}{2} \Bigl( \bigl(   |\lambda|^{p-2} + |\lambda '|^{p-2} \bigr) |\lambda - \lambda' |^2 
 +  \bigl(   |\lambda|^2 - | \lambda' |^2 \bigr)  \bigl(   |\lambda|^{p-2} - |\lambda '|^{p-2} \bigr) \Bigr) \\
 \displaystyle 
 \ge \frac{1}{2}  \bigl(   |\lambda|^{p-2} + |\lambda '|^{p-2} \bigr) |\lambda - \lambda' |^2 .
 \end{array}
 \end{eqnarray*}
 Since $|\lambda| + |\lambda'| \not=0$ we obtain
 \begin{eqnarray*}
\begin{array}{ll}
\displaystyle \bigl( |\lambda|^{p-2} \lambda - | \lambda' |^{p-2} \lambda' ) : ( \lambda - \lambda' )  
\ge \frac{1}{2} \frac{ |\lambda|^{p-2} + |\lambda '|^{p-2} }{ \bigl( |\lambda| + |\lambda'| \bigr)^{p-2}} 
\bigl( |\lambda| + |\lambda'| \bigr)^{p-2} |\lambda - \lambda' |^2 \\
\displaystyle \ge  \frac{1}{2} \frac{ |\lambda|^{p-2} + |\lambda '|^{p-2} }{ \bigl( |\lambda| + |\lambda'| \bigr)^{p-2}} |\lambda - \lambda' |^p.
\end{array}
\end{eqnarray*}
But
\begin{eqnarray*}
\left( \frac{ |\lambda| + |\lambda'| }{2} \right)^{p-2} \le \Bigl( \max \bigl( |\lambda| , |\lambda'| \bigr) \Bigr)^{p-2} \le |\lambda|^{p-2} + |\lambda'|^{p-2}
\end{eqnarray*}
and thus
\begin{eqnarray*}
\displaystyle \bigl( |\lambda|^{p-2} \lambda - | \lambda' |^{p-2} \lambda' ) : ( \lambda - \lambda' ) 
\ge \frac{1}{2^{p-1}}  |\lambda - \lambda' |^p.
\end{eqnarray*}
By replacing $\lambda = D(\tilde \upsilon_1 +G)$, $\lambda' = D( \tilde \upsilon_2 +G)$ we infer from (\ref{ineg3}) that 
\begin{eqnarray*}
0 & = & \int_{\Omega} \bigl( \bigl|D( \tilde \upsilon_1 +G) \bigr|^{p-2} D(\tilde \upsilon_1+ G) 
- \bigl|D( \tilde \upsilon_2 +G) \bigr|^{p-2} D(\tilde \upsilon_2+ G) \Bigr) : D( \tilde \upsilon_1 - \tilde \upsilon_2) \, dx \\
& \ge & \frac{1}{2^{p-1}}  \Vert  D( \tilde \upsilon_1 - \tilde \upsilon_2) \Vert^p_{(L^p( \Omega))^{3 \times3}}.
\end{eqnarray*}
and with Korn's inequality we may conclude that $\tilde \upsilon_1 = \tilde \upsilon_2$.

\smallskip

\noindent {\bf Case 2:} $1< p< 2$.

\smallskip

In this case we have 
\begin{eqnarray} \label{inegp}
\displaystyle \bigl( |\lambda| + |\lambda'| \bigr)^{2-p}  \bigl( |\lambda|^{p-2} \lambda - | \lambda' |^{p-2} \lambda' ) : ( \lambda - \lambda' ) 
\ge (p-1)  |\lambda - \lambda' |^2
\end{eqnarray}
for all $(\lambda, \lambda') \in {\mathbb R}^{3 \times 3} \times {\mathbb R}^{3 \times 3}$. Indeed if $|\lambda| = |\lambda'|$ we have
\begin{eqnarray*}
\displaystyle \bigl( |\lambda| + |\lambda'| \bigr)^{2-p}  \bigl( |\lambda|^{p-2} \lambda - | \lambda' |^{p-2} \lambda' ) : ( \lambda - \lambda' ) 
= 2^{2-p}  |\lambda - \lambda' |^2
\end{eqnarray*}
and the conclusion follows from the inequality $2^{2-p} > p-1$ for all $p \in (1,2)$. Otherwise, if $|\lambda| \not= |\lambda'|$ we let
\begin{eqnarray*}
G(\lambda, \lambda') = \frac{\bigl( |\lambda| + |\lambda'| \bigr)^{2-p}  \bigl( |\lambda|^{p-2} \lambda - | \lambda' |^{p-2} \lambda' ) : ( \lambda - \lambda' ) }{ |\lambda - \lambda' |^2}.
\end{eqnarray*}
With the same computations as in Lemma \ref{lemma1} we have
\begin{eqnarray*}
\begin{array}{ll}
\displaystyle G(\lambda, \lambda') = \frac{1}{2} \frac{ \bigl( |\lambda| + |\lambda'| \bigr)^{2-p}}{ |\lambda - \lambda' |^2}
\Bigl( \bigl(   |\lambda|^{p-2} + |\lambda '|^{p-2} \bigr) |\lambda - \lambda' |^2 
 +  \bigl(   |\lambda|^2 - | \lambda' |^2 \bigr)  \bigl(   |\lambda|^{p-2} - |\lambda '|^{p-2} \bigr) \Bigr)
\\
\displaystyle
\ge \frac{ \bigl( |\lambda| + |\lambda'| \bigr)^{2-p}}{2} 
\left( \bigl(   |\lambda|^{p-2} + |\lambda '|^{p-2} \bigr) 
 +  \frac{ \bigl(   |\lambda| + | \lambda' | \bigr)  \bigl(   |\lambda|^{p-2} - |\lambda '|^{p-2} \bigr)}{  \bigl|   |\lambda| - | \lambda' | \bigr|} \right).
 \end{array}
 \end{eqnarray*}
 Without loss of generality we may assume that $|\lambda| > |\lambda'|$ and we let $\displaystyle t = \frac{|\lambda| }{ |\lambda'|} >1$. Thus
 \begin{eqnarray*}
\begin{array}{ll}
\displaystyle G(\lambda, \lambda') \ge  \frac{ \bigl( 1+t \bigr)^{2-p}}{2} 
\left( (  1+t^{p-2}) 
 +  \frac{ (1+t)  (t^{p-2} -1 )}{  |t-1 |} \right) \\
 \displaystyle = \frac{  (1+t)^{2-p} (t^{p-1} -1)}{t-1} \ge \frac{t (1-t^{1-p})}{t-1} = 1 - \frac{t^{2-p} -1}{t-1}.
 \end{array}
 \end{eqnarray*}
But, for all $t >1$ we have $\displaystyle \frac{t^{2-p} -1}{t-1} < 2-p$ and (\ref{inegp}) is satisfied.

Hence 
\begin{eqnarray*}
\displaystyle \Bigl( \bigl( |\lambda| + |\lambda'| \bigr)^p \Bigr)^{\frac{2-p}{2}} \Bigl( \bigl( |\lambda|^{p-2} \lambda - | \lambda' |^{p-2} \lambda' ) : ( \lambda - \lambda' ) \Bigr)^{\frac{p}{2}}
\ge (p-1)^{\frac{p}{2}}  |\lambda - \lambda' |^p.
\end{eqnarray*}
Since $p>1$ we have also 
\begin{eqnarray*}
 \bigl( |\lambda| + |\lambda'| \bigr)^p \le 2^{p-1}  \bigl( |\lambda|^p + |\lambda'|^p \bigr)
 \end{eqnarray*}
 which yields
 \begin{eqnarray*}
\displaystyle  \bigl( |\lambda|^p + |\lambda'|^p \bigr)^{\frac{2-p}{2}} \Bigl( \bigl( |\lambda|^{p-2} \lambda - | \lambda' |^{p-2} \lambda' ) : ( \lambda - \lambda' ) \Bigr)^{\frac{p}{2}}
\ge C_p |\lambda - \lambda' |^p \ge 0
\end{eqnarray*}
for all $(\lambda, \lambda') \in {\mathbb R}^{3 \times 3} \times {\mathbb R}^{3 \times 3}$, with $\displaystyle C_p = \frac{(p-1)^{\frac{p}{2}}}{ 2^{ \frac{(p-1)(2-p)}{2} } }$. By replacing $\lambda = D(\tilde \upsilon_1 +G)$, $\lambda' = D( \tilde \upsilon_2 +G)$ and using H\"older's inequality we obtain:
\begin{eqnarray*}
\begin{array}{ll}
\displaystyle C_p \int_{\Omega} \bigl| D( \tilde \upsilon_1 - \tilde \upsilon_2) \bigr|^p \, dx \\
\displaystyle 
\le \left( \int_{\Omega} \bigl( \bigl|D( \tilde \upsilon_1 +G) \bigr|^{p-2} D(\tilde \upsilon_1+ G) 
- \bigl|D( \tilde \upsilon_2 +G) \bigr|^{p-2} D(\tilde \upsilon_2+ G) \Bigr) : D( \tilde \upsilon_1 - \tilde \upsilon_2) \, dx \right)^{\frac{p}{2}} \\
\displaystyle \qquad \times \left( \int_{\Omega} \Bigl(  \bigl| D( \tilde \upsilon_1 + G ) \bigr|^p +  \bigl| D( \tilde \upsilon_2 + G ) \bigr|^p \Bigr) \, dx \right)^{\frac{2-p}{2}}.
\end{array}
\end{eqnarray*}
With (\ref{ineg3}) we get
\begin{eqnarray*}
C_p \bigl\|  D( \tilde \upsilon_1 - \tilde \upsilon_2) \bigr\|^p_{ (L^p(\Omega))^{3 \times 3}}
 = C_p \int_{\Omega} \bigl| D( \tilde \upsilon_1 - \tilde \upsilon_2) \bigr|^p \, dx  \le 0
 \end{eqnarray*}
 and we may conclude once again that $\tilde \upsilon_1 = \tilde \upsilon_2$.

 \end{proof}

 Having in mind the study of the coupled problem (P) we establish now a priori estimates, independent of the given temperature and velocity $(\theta, \upsilon)$ for the solutions of the flow problem (\ref{flow_aux}).

 
 \begin{proposition}\label{prop1}
 Under the assumptions of Theorem \ref{th1},
 there exists a positive real number  $\mathcal{C}_{flow}$, independent of $\theta$ and $\upsilon$ and depending only on $f$, $G$, $k$, $s$, $\mu_0$ and $\mu_1$, 
such that the   solution  $\tilde \upsilon$ of problem (\ref{flow_aux}) satisfies
\begin{eqnarray}\label{eq5-13}
\Vert  \tilde \upsilon \Vert _{1.p} \leq \mathcal{C}_{flow}.
\end{eqnarray}
 \end{proposition}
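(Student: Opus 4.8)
The plan is to test the variational inequality in (\ref{flow_aux}) with the admissible function $\varphi = 0 \in V_{0.div}^{p}$ and then to recycle the coercivity estimate already obtained in the proof of Theorem \ref{th1}. Taking $\varphi = 0$ the inequality reads
\[
-\bigl\langle \mathbf{A}_{\theta, \upsilon}(\tilde \upsilon), \tilde \upsilon \bigr\rangle + \Psi(0) - \Psi(\tilde \upsilon) \ge -\int_{\Omega} f \cdot \tilde \upsilon \, dx ,
\]
which I would rearrange as
\[
\bigl\langle \mathbf{A}_{\theta, \upsilon}(\tilde \upsilon), \tilde \upsilon \bigr\rangle + \Psi(\tilde \upsilon) \le \Psi(0) + \int_{\Omega} f \cdot \tilde \upsilon \, dx .
\]
Since $\Psi(\tilde \upsilon) \ge 0$, I would discard it and bound the remaining left member from below by (\ref{estim1}) combined with Korn's inequality (\ref{eq320}): for $\Vert \tilde \upsilon \Vert_{1.p}$ large enough that $\Vert D(\tilde \upsilon) \Vert_{(L^p(\Omega))^{3 \times 3}} \ge \Vert D(G) \Vert_{(L^p(\Omega))^{3 \times 3}}$, this yields a lower bound whose leading term is $2\mu_0 C_{Korn}^p \Vert \tilde \upsilon \Vert_{1.p}^p$ minus a quantity growing at most like $\Vert \tilde \upsilon \Vert_{1.p}^{p-1}$.

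For the right-hand side, $\Psi(0) = \int_{\Gamma_0} k |G - s| \, dx'$ is a finite constant depending only on $k$, $G$ and $s$: its finiteness follows from $k \in L^{p'}_+(\Gamma_0)$, $s \in \mathbf{L}^p(\Gamma_0)$, the trace theorem applied to $G \in \mathbf{W}^{1,p}(\Omega)$ (see (\ref{eqG})--(\ref{eqfk})), and H\"older's inequality. The forcing term is controlled by H\"older's inequality and the Poincar\'e inequality (valid since the elements of $V_{0.div}^p$ vanish on $\Gamma_1 \cup \Gamma_L$, which has positive surface measure), giving $\int_{\Omega} f \cdot \tilde \upsilon \, dx \le C \Vert f \Vert_{\mathbf{L}^{p'}(\Omega)} \Vert \tilde \upsilon \Vert_{1.p}$. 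Combining these bounds, I would arrive at a scalar inequality of the form
\[
2\mu_0 C_{Korn}^p \, \Vert \tilde \upsilon \Vert_{1.p}^p \le c_1 \Vert \tilde \upsilon \Vert_{1.p}^{p-1} + c_2 \Vert \tilde \upsilon \Vert_{1.p} + c_3 ,
\]
with $c_1, c_2, c_3 \ge 0$.

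Since $p > 1$, the left-hand side grows strictly faster than the right-hand side as $\Vert \tilde \upsilon \Vert_{1.p} \to +\infty$, so this inequality forces $\Vert \tilde \upsilon \Vert_{1.p}$ to stay below a threshold $\mathcal{C}_{flow}$ computable from $c_1, c_2, c_3, \mu_0$ and $C_{Korn}$; I would make this precise by a routine Young inequality, absorbing the lower-order powers into a fraction of the leading term. The decisive point, and the only genuine subtlety, is the claimed \emph{independence} of $\mathcal{C}_{flow}$ from $\theta$ and $\upsilon$. This holds because the operator $\mathbf{A}_{\theta, \upsilon}$ enters the whole estimate only through the uniform bounds $0 < \mu_0 \le \mu \le \mu_1$ of (\ref{mlo}): the pointwise values of $\mu\bigl(\theta, \upsilon + G, \cdot\bigr)$ never appear in $c_1, c_2, c_3$, which depend solely on $f$, $G$, $k$, $s$, $\mu_0$ and $\mu_1$. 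Hence $\mathcal{C}_{flow}$ is indeed independent of $(\theta, \upsilon)$, which is exactly the content of the proposition.
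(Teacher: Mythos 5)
Your proposal is correct and follows essentially the same route as the paper's proof: test (\ref{flow_aux}) with $\varphi = 0$, discard $\Psi(\tilde\upsilon)\ge 0$, invoke the lower bound (\ref{estim1}) together with Poincar\'e's and Korn's inequalities, and conclude from the fact that the leading term $t^p$ dominates the lower-order terms $t^{p-1}$, $t$ and constants as $t = \Vert \tilde\upsilon \Vert_{1.p} \to +\infty$, with all constants controlled only through $\mu_0$, $\mu_1$, $f$, $G$, $k$, $s$. The only difference is cosmetic and lies in the final scalar step: the paper divides through by $\Vert \tilde\upsilon \Vert_{1.p}$ and observes that the resulting function $\Lambda(t)$ tends to $-C_{Korn}<0$, whereas you expand $\bigl| \Vert D(\tilde\upsilon)\Vert_{(L^p(\Omega))^{3\times 3}} - \Vert D(G)\Vert_{(L^p(\Omega))^{3\times 3}} \bigr|^p$ by convexity and absorb the lower-order powers via Young's inequality; both are the same domination argument.
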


\begin{proof}
Let  $\tilde \upsilon$ be a solution  of problem (\ref{flow_aux}). With $\varphi =0$  
 we obtain
\begin{eqnarray*}\label{sosa}
\langle\mathbf{A}_{\theta, \upsilon} \tilde \upsilon, \tilde \upsilon \rangle \le 
 \langle\mathbf{A}_{\theta, \upsilon} \tilde \upsilon, \tilde \upsilon \rangle 
 + \Psi(\tilde \upsilon) \leq \int_{\Omega} f \cdot  \tilde \upsilon \,dx+\Psi(0).
\end{eqnarray*}
By using (\ref{estim1}) and Poincar\'e's inequality we get
\begin{eqnarray*}
\begin{array}{ll}
\displaystyle 2 \mu_{0} \Big|\Vert D( \tilde \upsilon)  \Vert _{(L^p(\Omega))^{3 \times 3} }-\Vert D( G ) \Vert _{(L^p(\Omega))^{3 \times 3}}
\Big|^{p} 
\\
\displaystyle \leq  C_{Poincare} \Vert f\Vert_{\textbf{L}^{p'}(\Omega)}\Vert \tilde  \upsilon \Vert_{1.p}+\Psi(0)
+2\mu_{1} \Big(\Vert  \tilde \upsilon \Vert_{1.p}+\Vert G\Vert_{1.p}\Big)^{p-1}\Vert G \Vert_{1.p}.
\end{array}
\end{eqnarray*}
where $C_{Poincare}$ denotes the Poincar\'e's constant in ${\bf W}^{1,p}_{\Gamma_1 \cup \Gamma_L}(\Omega)$.  
If $\Vert \tilde  \upsilon \Vert _{1.p} \neq 0$ 
we obtain
\begin{eqnarray}\label{eq5-11}
\begin{array}{ll}
\displaystyle   C_{Korn} 
 \leq  \left( \frac{1}{2 \mu_0} C_{Poincare} \Vert f\Vert_{\textbf{L}^{p'}(\Omega)}
\Vert \tilde \upsilon \Vert_{1.p}^{1-p} + \frac{\Psi(0)}{\Vert 
\tilde \upsilon \Vert _{1.p}^p}  
+
\frac{2\mu_{1}}{\Vert  \tilde \upsilon \Vert_{1.p}}
\left(1+\displaystyle{\frac{\Vert G\Vert _{1.p}}{\Vert \tilde  \upsilon \Vert _{1.p}}}\right)^{p-1}\Vert G \Vert_{1.p} \right)^{1/p} 
\\
\displaystyle   \qquad \qquad  +  \frac{\Vert D(G) \Vert _{(L^p(\Omega))^{3 \times 3}}}{\Vert 
\tilde \upsilon \Vert_{1.p}} .
\end{array}
\end{eqnarray}
By observing that the mapping 
\begin{eqnarray*}
\begin{array}{ll}
\displaystyle \Lambda: t \mapsto 
\left( \frac{1}{2 \mu_0} C_{Poincare} \Vert f\Vert_{\textbf{L}^{p'}(\Omega)}
t^{1-p} +
\frac{\Psi(0)}{t^p}  
+
\frac{2\mu_{1}}{t}
\left(1+\displaystyle{\frac{\Vert G\Vert _{1.p}}{t}}\right)^{p-1}\Vert G \Vert_{1.p} \right) 
\\
\displaystyle \qquad \qquad 
- 
\frac{\Vert D(G) \Vert _{(L^p(\Omega))^{3 \times 3} } }{t}
- C_{Korn}
\end{array} 
\end{eqnarray*}
tends to $-  C_{Korn}  <0$ as $t$ tends to $+ \infty$ we infer that there exists $\mathcal{C}_{flow} >0$ such that $\Lambda(t) <0$ for all $t > \mathcal{C}_{flow}$. With (\ref{eq5-11}) it follows that 
\begin{eqnarray*}\label{eq5-13}
\Vert \tilde  \upsilon \Vert _{1.p} \leq \mathcal{C}_{flow}.
\end{eqnarray*}
\end{proof}

\begin{remark} The reader may wonder why we have not considered the fluid flow problem for a given temperature i.e.  the following variational inequality 
\begin{eqnarray}\label{flow_problem}
\left\{
\begin{array}{ll}
\hbox{\rm Find $\upsilon_{\theta} \in V_{0.div}^{p}$ such that} \\
\displaystyle
a(\theta;\upsilon_{\theta} , \varphi-\upsilon_{\theta})
+\Psi(\varphi)-\Psi(\upsilon_{\theta})
\geq \int_{\Omega} f\cdot (\varphi-\upsilon_{\theta})\,dx,
\quad \forall \varphi \in V_{0.div}^{p}
\end{array}
 \right.
\end{eqnarray} 
for a given temperature $\theta \in L^q(\Omega)$ with $q \ge 1$ and $p>1$. Indeed with the same arguments as in Corollary \ref{lemma2} and Theorem \ref{th1} we obtain that the operator $\mathbf{A}_{\theta} :V_{0.div}^{p}\rightarrow \bigl(V_{0.div}^{p} \bigr)'$ given by
\begin{eqnarray*}
\displaystyle \langle\mathbf{A}_{\theta} \upsilon , {\varphi}\rangle & = & a(\theta; \upsilon ,\varphi) \\
& = & 
\displaystyle \int _{\Omega} {\mathcal F} \bigl( \theta, \upsilon  +G, D( \upsilon  +G) \bigr): D(\varphi)dx \quad \forall ( \upsilon , \varphi) \in  V_{0.div}^{p} \times V_{0.div}^{p}
\end{eqnarray*}
is bounded, pseudo-monotone and coercive. Hence problem (\ref{flow_problem}) admits a solution but we can not prove its uniqueness  and the fixed point technique introduced in Section \ref{section5} fails.
\end{remark}


\section{Heat transfer auxiliary  problem} \label{section4}

As already outlined in the Introduction and in Section \ref{section2}, the right hand side of the heat equation (\ref{TEM1}) contains a term belonging to $L^1(\Omega)$, namely $2\mu(\theta,u ,|D(u)|)|D(u)|^p$. It is well known that for elliptic equations of the form 
\begin{eqnarray*}
\left\{
\begin{array}{ll}
\displaystyle - {\rm div} \bigl( a(x,\nabla \theta) \bigr)=g \quad \hbox{\rm in $\Omega$} \\
\displaystyle \theta = 0 \quad \hbox{\rm in $\partial \Omega$}
\end{array}
\right.
\end{eqnarray*}
with $g \in L^1(\Omega)$ and coercivity properties $a(x, \xi) \cdot \xi \ge \alpha_a |\xi|^2$ ($\alpha_a>0$) for all $\xi \in \mathbb{R}^3$ and for almost every $x \in \Omega$, we may expect solutions $\theta \in W^{1,q}_0(\Omega)$ with $1 \le q < 3/2$ and uniform estimates of  $\Vert  \theta \Vert _{W^{1,q} (\Omega)}$ whenever $g$ remains in a given ball of $L^1(\Omega)$ (see \cite{Bocca}  and the references therein). Since we consider also the convection term in (\ref{TEM1}) and mixed Dirichlet-Neumann boundary conditions (\ref{TEM3}) we can not apply directly these results. In order to cope with the difficulty due to the $L^1$-term in the right-hand side of (\ref{TEM1}) we 
follow the same strategy as in \cite{Bocca2} and we 
replace it by some approximate function $g_{\delta} (\theta, \upsilon)$ given by 
\begin{eqnarray*}
 g_{\delta}(\theta,\upsilon)
=\frac{2\mu \bigl(\theta,\upsilon +G, \bigl|D(\upsilon+G) \bigr| \bigr) \bigl|D(\upsilon+G) \bigr|^p }{ 1 + 2\delta\mu \bigl(\theta, \upsilon +G, \bigl|D( \upsilon+G) \bigr| \bigr) \bigl|D( \upsilon+G) \bigr|^p}
\end{eqnarray*}
with $\delta >0$. Obviously for any $\delta >0$, $\upsilon \in V_{0. div}^{p}$ with $p>1$ and $\theta \in L^q(\Omega)$ with $q \ge 1$ we have $g_{\delta} (\theta, \upsilon) \in L^{\infty} (\Omega) $ and 
\begin{eqnarray} \label{m_delta_1}
\Vert   g_{\delta}(\theta, \upsilon) \Vert _{L^{\infty}(\Omega)} \le \frac{1}{\delta}.
\end{eqnarray}
Moreover we may observe that (\ref{mlo}) implies that $ g_{\delta}(\theta,\upsilon)$ is also  bounded in $L^1(\Omega)$ 
uniformly with respect to $\theta$ and $\delta$ and we have 
\begin{eqnarray} \label{m_delta_2}
\Vert g_{\delta}(\theta, \upsilon) \Vert_{L^1(\Omega)} 
 \leq 2 \mu_{1}\Vert D( \upsilon +G) \Vert_{(L^p(\Omega))^{3\times3}}^p 
\end{eqnarray}
for all $\delta >0$, for all $\theta \in L^q(\Omega)$ and for all $\upsilon \in V_{0 .div}^{p}$ with $p>1$ and  $q \ge 1$. Similarly we define $\theta^b_{\delta}$ as
\begin{eqnarray}  \label{theta_b_delta}
\theta^b_{\delta} = \frac{\theta^b}{1 + \delta | \theta^b|}
\end{eqnarray}
for all $\delta >0$.

\smallskip

So we consider the following approximate heat equation:
\begin{eqnarray} \label{heat_approx1}
 (\upsilon +G) \cdot \nabla\theta_{\delta} - {\rm  div} (K \nabla \theta_{\delta}) 
 = g_{\delta}(\theta_{\delta}, \upsilon) 
+r (\theta_{\delta}) \quad \mbox{in} \  \Omega
\end{eqnarray}
with the boundary conditions 
\begin{eqnarray} \label{heat_approx1_bc}
\theta_{\delta}=0\quad \mbox{on}\  \Gamma_{1}\cup \Gamma_{L},  \quad 
( K \nabla \theta_{\delta}) \cdot n
=\theta^{b}_{\delta}
\quad \mbox{on}\  \Gamma_{0}.
\end{eqnarray}
Moreover let us recall that, since we expect $\theta \in W^{1,q}_{\Gamma_1 \cup \Gamma_L} (\Omega)$ with $1 \le q <3/2$,  we have also some condition on $p$ in order to ensure that the convection term $(\upsilon + G) \cdot \nabla \theta$ belongs to $L^1(\Omega)$ i.e. $\upsilon +G$ should belong to $L^{p_*}(\Omega)$ with $p_* \ge q'$ which yields the compatibility conditions between $p$ and $q$ given by (\ref{compa1}) or equivalently (\ref{compa2}).

\smallskip

Since problem (\ref{heat_approx1})-(\ref{heat_approx1_bc}) is non-linear we consider first a linearized problem and we prove an existence and uniqueness result.

\begin{proposition} \label{prop2}
Let $p \ge 3/2$, $\upsilon \in V_{0.div}^p$ and $G$ satisfy (\ref{eqG}). Let us assume that   (\ref{rop}) and  (\ref{TEM2})-(\ref{TEM2bis})-(\ref{Cr})-(\ref{thetab}) hold. Then, for all $\delta >0$ and $\tilde \theta \in L^2(\Omega)$, there exists an unique $\tilde \theta_{\delta} \in   W^{1,2}_{ \Gamma_{1}\cup \Gamma_{L}}(\Omega) $  such that
\begin{eqnarray}\label{Lpb} 
B(\upsilon, \tilde \theta_{\delta},w)=L_{\delta} (\upsilon, \tilde \theta ,w) \quad \forall w\in W^{1,2}_{ \Gamma_{1}\cup \Gamma_{L}}(\Omega)
\end{eqnarray}
where
\[B(\upsilon, \tilde \theta_{\delta},w)=  \int_{\Omega} \bigl( (\upsilon +G) \cdot \nabla \tilde \theta_{\delta} \bigr) w\,  dx
+ \int_{\Omega} \bigl( K \nabla \tilde \theta_{\delta} \bigr) \cdot \nabla w \, dx  , \]
\[L_{\delta} (\upsilon, \tilde \theta  , w)= \int _{\Omega} g_{\delta}( \tilde \theta  ,
{\upsilon}) w \,dx + \int _{\Omega} r( \tilde \theta  ) w \, dx + \int _{ \Gamma_{0}} \theta^{b}_{\delta}
w \, dx'.\]
\end{proposition}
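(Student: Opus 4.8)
The plan is to recognize (\ref{Lpb}) as a coercive linear variational problem on the Hilbert space $H := W^{1,2}_{\Gamma_1 \cup \Gamma_L}(\Omega)$, equipped with the norm $\|\cdot\|_{1.2}$, and to apply the Lax--Milgram theorem. For the fixed data $\upsilon$ and $\tilde\theta$, the map $(u,w) \mapsto B(\upsilon, u, w)$ is a bilinear form on $H \times H$ (non-symmetric, because of the convection term, so the Riesz representation theorem does not suffice) and $w \mapsto L_\delta(\upsilon, \tilde\theta, w)$ is a linear form on $H$. It thus suffices to check that $B(\upsilon, \cdot, \cdot)$ is continuous and coercive and that $L_\delta(\upsilon, \tilde\theta, \cdot)$ is continuous.

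First I would establish continuity of $B$. The diffusion term $\int_\Omega (K\nabla u)\cdot\nabla w\,dx$ is bounded by $\|K\|_{(L^\infty(\Omega))^{3\times 3}}\|u\|_{1.2}\|w\|_{1.2}$ thanks to (\ref{TEM2}). For the convection term $\int_\Omega ((\upsilon+G)\cdot\nabla u)\,w\,dx$ I would use H\"older's inequality with exponents $3$, $2$ and $6$, together with the Sobolev embedding $W^{1,2}(\Omega)\hookrightarrow L^6(\Omega)$, to obtain the bound $\|\upsilon+G\|_{\mathbf{L}^3(\Omega)}\|\nabla u\|_{(L^2(\Omega))^3}\|w\|_{L^6(\Omega)}$. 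Here $\upsilon + G \in \mathbf{W}^{1,p}(\Omega) \hookrightarrow \mathbf{L}^3(\Omega)$ precisely because $p \ge 3/2$ (so that $p_* = 3p/(3-p) \ge 3$ when $p<3$, and the embedding is immediate when $p\ge 3$), which is exactly why the hypothesis $p\ge 3/2$ is imposed. Continuity of $L_\delta$ follows similarly: by (\ref{m_delta_1}) one has $g_\delta(\tilde\theta,\upsilon)\in L^\infty(\Omega)$ with $\|g_\delta(\tilde\theta,\upsilon)\|_{L^\infty(\Omega)}\le 1/\delta$, by (\ref{Cr}) the function $r(\tilde\theta)$ is uniformly bounded, and by (\ref{theta_b_delta}) one has $\theta^b_\delta\in L^\infty(\Gamma_0)$ with $|\theta^b_\delta|\le 1/\delta$; the first two terms are then controlled by $\|w\|_{L^2(\Omega)}$ and the boundary term by the continuity of the trace operator $W^{1,2}(\Omega)\to L^2(\partial\Omega)$. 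It is essential here that the truncations $g_\delta$ and $\theta^b_\delta$ are bounded, since the untruncated data lie only in $L^1$.

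The main point — and the step I expect to be the real obstacle — is the coercivity of $B(\upsilon,\cdot,\cdot)$, which hinges on showing that the convection term vanishes on the diagonal. Taking $u = w$ and writing $((\upsilon+G)\cdot\nabla w)\,w = \frac{1}{2}(\upsilon+G)\cdot\nabla(w^2)$, I would integrate by parts: since $\mathrm{div}(\upsilon+G)=0$ in $\Omega$ (because $\upsilon\in V_{0.div}^p$ and $G$ satisfies (\ref{eqG})), Green's formula gives
\begin{equation*}
\int_\Omega \bigl((\upsilon+G)\cdot\nabla w\bigr)\,w\,dx = \frac{1}{2}\int_{\partial\Omega}\bigl((\upsilon+G)\cdot n\bigr)\, w^2\,d\sigma .
\end{equation*}
The boundary integral splits over $\Gamma_1\cup\Gamma_L$, where $w=0$ since $w\in H$, and over $\Gamma_0$, where $(\upsilon+G)\cdot n = \upsilon\cdot n + G\cdot n = 0$ by the definition of $V_0^p$ and by (\ref{eqG}); hence the entire boundary term, and therefore the convection term, is zero. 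This integration by parts should be justified by a density argument (approximating $w$ by smooth functions vanishing on $\Gamma_1\cup\Gamma_L$), the integrand being integrable by the same H\"older estimate as above. It then follows from (\ref{TEM2bis}) that
\begin{equation*}
B(\upsilon, w, w) = \int_\Omega (K\nabla w)\cdot\nabla w\,dx \ge k_0 \|\nabla w\|_{(L^2(\Omega))^3}^2 = k_0\|w\|_{1.2}^2 ,
\end{equation*}
which is the desired coercivity, recalling that $\|w\|_{1.2}=\|\nabla w\|_{(L^2(\Omega))^3}$ is indeed a norm on $H$ by Poincar\'e's inequality, $w$ vanishing on the part $\Gamma_1\cup\Gamma_L$ of positive surface measure.

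With continuity and coercivity of $B(\upsilon,\cdot,\cdot)$ and continuity of $L_\delta(\upsilon,\tilde\theta,\cdot)$ in hand, the Lax--Milgram theorem yields a unique $\tilde\theta_\delta\in H$ satisfying (\ref{Lpb}), which completes the proof. The only genuinely delicate point is the vanishing of the convection term, where all the structural hypotheses — incompressibility of $\upsilon+G$, the slip conditions $\upsilon\cdot n=0$ and $G\cdot n = 0$ on $\Gamma_0$, and the homogeneous Dirichlet condition on $\Gamma_1\cup\Gamma_L$ — must be used simultaneously.
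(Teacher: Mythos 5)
Your proof is correct and follows essentially the same route as the paper: Lax--Milgram on $W^{1,2}_{\Gamma_1\cup\Gamma_L}(\Omega)$, continuity via the embeddings $W^{1,2}(\Omega)\hookrightarrow L^6(\Omega)$ and $\mathbf{W}^{1,p}(\Omega)\hookrightarrow \mathbf{L}^3(\Omega)$ (this is where $p\ge 3/2$ enters), boundedness of the truncated data $g_\delta$, $r$, $\theta^b_\delta$, and coercivity by showing the convection term vanishes on the diagonal through Green's formula, incompressibility, the slip conditions on $\Gamma_0$ and the Dirichlet condition on $\Gamma_1\cup\Gamma_L$. The only cosmetic difference is that the paper justifies the integration by parts by proving the identity $\int_\Omega ((\upsilon+G)\cdot\nabla w)\,w\,dx=\frac{1}{2}\int_{\partial\Omega}((\upsilon+G)\cdot n)\,w^2\,dY$ for \emph{all} $w\in W^{1,2}(\Omega)$, using density of $\mathcal{D}(\overline\Omega)$ in $W^{1,2}(\Omega)$ and continuity of the trace into $L^4(\partial\Omega)$, and only then restricts to $w$ vanishing on $\Gamma_1\cup\Gamma_L$ --- which avoids the slightly more delicate density of smooth functions vanishing on part of the boundary that your sketch invokes.
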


\begin{remark} The first integral term in $B(\upsilon, \tilde \theta_{\delta},w)$ is well defined: indeed if $\tilde \theta_{\delta}$ and $w$ belong to $ W^{1,2}_{ \Gamma_{1}\cup \Gamma_{L}}(\Omega) $, then $\nabla \tilde \theta_{\delta} \in L^2(\Omega)$ and $w \in L^6(\Omega)$ hence $ w \nabla \tilde \theta_{\delta}  \in {\bf L}^{3/2}(\Omega)$ and $\upsilon + G \in L^3 (\Omega)$ for all $\upsilon \in V_{0.div}^p$ with $p \ge 3/2$.
\end{remark}

\begin{proof}
By using the continuous injection of $W^{1,2}_{ \Gamma_{1}\cup \Gamma_{L}}(\Omega) $ into $L^6 (\Omega)$ we obtain immediately that $B(\upsilon, \cdot, \cdot)$ is bilinear and continuous on $W^{1,2}_{ \Gamma_{1}\cup \Gamma_{L}}(\Omega) $. Moreover, for all $ w  \in W^{1,2}_{ \Gamma_{1}\cup \Gamma_{L}}(\Omega)$ we have 
\begin{eqnarray*}
 B(\upsilon, w, w)
 =  \int_{\Omega} \bigl( ( \upsilon+G)\cdot \nabla w \bigr) w \,dx
 + \int_{\Omega} ( K \nabla w) \cdot \nabla w \,dx .
\end{eqnarray*}
For any $w \in {\mathcal D} ({\overline \Omega})$ Green's formula yields
\begin{eqnarray*}
\int_{\Omega} \bigl( ( \upsilon+G)\cdot \nabla w \bigr) w \,dx
& =  & - \frac{1}{2} \int_{\Omega} {\rm div}( \upsilon+G)  w^2 \, dx  + \frac{1}{2} \int_{\partial \Omega} \bigl( ( \upsilon + G) \cdot n \bigr)  w^2 \, dY \\
& = & \frac{1}{2} \int_{\partial \Omega} \bigl( ( \upsilon + G) \cdot n \bigr)  w^2 \, dY.
\end{eqnarray*}
By using the density of ${\mathcal D} ({\overline \Omega})$ into $W^{1,2}( \Omega)$ and the continuity of the trace operator from $W^{1,2}( \Omega)$ into $L^4(\partial \Omega)$ we obtain that the same equality holds for any $w \in W^{1,2}(\Omega)$.

Hence with (\ref{TEM2bis})
\begin{eqnarray*}
 B(\upsilon, w, w )   = \int_{\Omega} ( K \nabla w) \cdot \nabla w \,dx \geq  k_{0} \Vert w \Vert_{1.2}^2 
 \quad \forall w \in W^{1,2}_{\Gamma_1 \cup \Gamma_L} (\Omega).
 \end{eqnarray*}
Finally with (\ref{Cr})-(\ref{thetab}) and (\ref{m_delta_1})-(\ref{theta_b_delta}) we obtain that  $L_{\delta}  (\upsilon, \tilde \theta  , \cdot)$ is linear and continuous on $W^{1,2}_{ \Gamma_{1}\cup \Gamma_{L}}(\Omega) $ and the existence of a unique solution $\tilde \theta_{\delta}$ to (\ref{Lpb}) follows from Lax-Milgram theorem.
\end{proof}

Of course we have a trivial estimate of the solution of problem (\ref{Lpb}) in $ W^{1,2}_{ \Gamma_{1}\cup \Gamma_{L}}(\Omega) $. Indeed let $\tilde \theta \in L^2(\Omega)$ and $\upsilon \in V^p_{0.div}$. By choosing $w = \tilde \theta_{\delta}$ in (\ref{Lpb}) we obtain
\begin{eqnarray}  \label{estim2}
\begin{array}{ll}
\displaystyle k_{0} \Vert \tilde \theta_{\delta} \Vert_{1.2}^2 \le B(\upsilon, \tilde \theta_{\delta}, \tilde \theta_{\delta}) = L_{\delta} (\upsilon, \tilde \theta, \tilde \theta_{\delta}) 
\\
\displaystyle \le \left( C'_{Poincare} \left( \frac{1}{\delta} + \| r\|_{L^{\infty} (\mathbb{R})} \right) |\Omega|^{1/2} 
+ C_{\gamma} \frac{1}{\delta} |\Gamma_0|^{1/2}
\right)
\Vert \tilde \theta_{\delta} \Vert_{1.2}
\end{array}
\end{eqnarray}
where $C_{\gamma}$ is the norm of the trace operator $\gamma : W^{1,2}_{ \Gamma_{1}\cup \Gamma_{L}} (\Omega) \to L^2 (\partial \Omega)$ and $C'_{Poincare}$ denotes the Poincar\'e's constant on $W^{1,2}_{ \Gamma_{1}\cup \Gamma_{L}}(\Omega)$. Hence
\begin{eqnarray*}
\Vert \tilde \theta_{\delta} \Vert_{1.2} \le R_{\delta} := \frac{1}{k_0} \left( C'_{Poincare} \left( \frac{1}{\delta} + \| r\|_{L^{\infty} (\mathbb{R})} \right) |\Omega|^{1/2} + C_{\gamma}  \frac{1}{\delta} |\Gamma_0|^{1/2}
 \right).
\end{eqnarray*} 
Clearly $R_{\delta}$ is independent of $\tilde \theta$ and $\upsilon$ but depends on $\delta$. 
On the other hand, by using the uniform boundedness of $g_{\delta} (\tilde \theta, \upsilon)$ in $L^1(\Omega)$ with respect to $\tilde \theta$ and $\delta$ we may obtain an estimate, independent of $\tilde \theta$ and $\delta$ of $\tilde \theta_{\delta}$ in $W^{1,q}_{ \Gamma_{1}\cup \Gamma_{L}}(\Omega)$ for all $\displaystyle q \in \bigl[1, 3/2 \bigr)$. More precisely we have

\begin{proposition} \label{prop3}
Let $\displaystyle q \in \bigl[1, 3/2 \bigr)$ and $p \ge 3/2$.
Let $\tilde \theta \in L^2(\Omega)$,  $\upsilon \in V_{0.div}^p$,  $G$ satisfying (\ref{eqG}) and assume that  (\ref{rop})-(\ref{mlo}) and  (\ref{TEM2})-(\ref{TEM2bis})-(\ref{Cr})-(\ref{thetab}) hold. Then there exists a positive real number ${\mathcal C}_{heat}$, independent of $\tilde \theta$ and $\delta$ and depending only on $\mu_1 \Vert D(\upsilon+G) \Vert^p_{(L^p(\Omega))^{3 \times 3}}$, $r$, $\theta^b$, $k_0$ and $q$, such that the unique solution   $ \tilde \theta_{\delta}  $  of (\ref{Lpb}) satisfies
\begin{eqnarray} \label{estim3} 
\Vert \tilde  \theta_{\delta} \Vert_{1.q} \le {\mathcal C}_{heat}.
\end{eqnarray} 
\end{proposition}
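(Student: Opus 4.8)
The plan is to adapt the classical Boccardo--Gallou\"et technique for elliptic problems with $L^1$ right-hand side, the key point being that every piece of data entering $L_\delta(\upsilon, \tilde\theta, \cdot)$ is uniformly bounded in $L^1$, independently of $\delta$ and $\tilde\theta$. Indeed, by (\ref{m_delta_2}) we have $\Vert g_\delta(\tilde\theta,\upsilon)\Vert_{L^1(\Omega)}\le 2\mu_1\Vert D(\upsilon+G)\Vert^p_{(L^p(\Omega))^{3\times3}}$, by (\ref{Cr}) we have $\Vert r(\tilde\theta)\Vert_{L^\infty(\Omega)}\le\Vert r\Vert_{L^\infty(\mathbb{R})}$, and by (\ref{theta_b_delta})--(\ref{thetab}) we have $|\theta^b_\delta|\le|\theta^b|$ so that $\Vert\theta^b_\delta\Vert_{L^1(\Gamma_0)}\le\Vert\theta^b\Vert_{L^1(\Gamma_0)}$. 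The $\delta$-dependent $W^{1,2}$ bound $R_\delta$ is useless as $\delta\to0$, so the whole difficulty is to trade integrability of the gradient for a weaker exponent $q<3/2$ while keeping the constant $\delta$-free.

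First I would introduce the truncation $T_k(s)=\max(-k,\min(k,s))$ for $k>0$ and test (\ref{Lpb}) with $w=T_k(\tilde\theta_\delta)\in W^{1,2}_{\Gamma_1\cup\Gamma_L}(\Omega)$. Setting $\Phi_k(s)=\int_0^s T_k(t)\,dt$, the convection term becomes $\int_\Omega(\upsilon+G)\cdot\nabla\Phi_k(\tilde\theta_\delta)\,dx$, which vanishes exactly as in the coercivity computation of Proposition \ref{prop2}: since ${\rm div}(\upsilon+G)=0$, $\Phi_k(\tilde\theta_\delta)=0$ on $\Gamma_1\cup\Gamma_L$, and $(\upsilon+G)\cdot n=0$ on $\Gamma_0$, Green's formula annihilates it. Using the coercivity (\ref{TEM2bis}) of $K$, the fact that $\nabla T_k(\tilde\theta_\delta)$ coincides with $\nabla\tilde\theta_\delta$ on $\{|\tilde\theta_\delta|<k\}$ and vanishes elsewhere, and $|T_k|\le k$ on $\Omega$ and on $\Gamma_0$, the uniform $L^1$ bounds above give
\[
k_0\int_\Omega|\nabla T_k(\tilde\theta_\delta)|^2\,dx\le kM,\qquad M:=2\mu_1\Vert D(\upsilon+G)\Vert^p_{(L^p(\Omega))^{3\times3}}+\Vert r\Vert_{L^\infty(\mathbb{R})}|\Omega|+\Vert\theta^b\Vert_{L^1(\Gamma_0)},
\]
hence $\int_\Omega|\nabla T_k(\tilde\theta_\delta)|^2\,dx\le Ck$ with $C:=M/k_0$ independent of $\delta$ and $\tilde\theta$.

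Next I would convert this into a decay estimate for the super-level sets $A_k:=\{|\tilde\theta_\delta|>k\}$. Since $\dim\Omega=3$, Poincar\'e's inequality together with the Sobolev embedding $W^{1,2}_{\Gamma_1\cup\Gamma_L}(\Omega)\hookrightarrow L^6(\Omega)$ and $|T_k(\tilde\theta_\delta)|=k$ on $A_k$ yield $k^2({\rm meas}\,A_k)^{1/3}\le\Vert T_k(\tilde\theta_\delta)\Vert^2_{L^6(\Omega)}\le C'k$, whence ${\rm meas}\,A_k\le C''k^{-3}$. Combining this with the Chebyshev bound ${\rm meas}\{|\nabla T_k(\tilde\theta_\delta)|>\lambda\}\le\lambda^{-2}\int_\Omega|\nabla T_k(\tilde\theta_\delta)|^2\,dx\le Ck\lambda^{-2}$ through the splitting
\[
{\rm meas}\{|\nabla\tilde\theta_\delta|>\lambda\}\le{\rm meas}\{|\nabla T_k(\tilde\theta_\delta)|>\lambda\}+{\rm meas}\,A_k\le Ck\lambda^{-2}+C''k^{-3},
\]
and optimizing in $k$ (the balance $k\sim\lambda^{1/2}$ equilibrates the two terms), I obtain ${\rm meas}\{|\nabla\tilde\theta_\delta|>\lambda\}\le C_*\lambda^{-3/2}$; that is, $\nabla\tilde\theta_\delta$ is bounded in the Marcinkiewicz space $L^{3/2,\infty}(\Omega)$, uniformly in $\delta$ and $\tilde\theta$.

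Finally, for any fixed $q\in[1,3/2)$ I would integrate the distribution function and use that $\Vert\tilde\theta_\delta\Vert_{1.q}^q=\int_\Omega|\nabla\tilde\theta_\delta|^q\,dx$ by the very definition of the norm:
\[
\int_\Omega|\nabla\tilde\theta_\delta|^q\,dx=q\int_0^\infty\lambda^{q-1}\,{\rm meas}\{|\nabla\tilde\theta_\delta|>\lambda\}\,d\lambda\le|\Omega|+qC_*\int_1^\infty\lambda^{q-5/2}\,d\lambda,
\]
the last integral being finite precisely because $q<3/2$. This produces a constant $\mathcal{C}_{heat}$ depending only on $M$ (hence on $\mu_1\Vert D(\upsilon+G)\Vert^p_{(L^p(\Omega))^{3\times3}}$, $r$, $\theta^b$, $k_0$) and on $q$, as asserted. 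The main obstacle is this middle level-set analysis: one must be sure that the convection term disappears for the truncated test function (so that nothing on the right-hand side scales worse than linearly in $k$) and then perform the optimization in $k$ carefully, since it is exactly this balance that pins the threshold $3/2$ and, crucially, keeps the bound independent of the degenerating parameter $\delta$.
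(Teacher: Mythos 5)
Your proof is correct, and it takes a genuinely different route from the paper's. You follow the classical Boccardo--Gallou\"et scheme: test with the truncations $T_k(\tilde\theta_\delta)$, derive $\int_\Omega|\nabla T_k(\tilde\theta_\delta)|^2\,dx\le Ck$, convert this into level-set decay via $W^{1,2}_{\Gamma_1\cup\Gamma_L}(\Omega)\hookrightarrow L^6(\Omega)$ and Chebyshev, optimize in $k$ to get a uniform bound on $\nabla\tilde\theta_\delta$ in weak-$L^{3/2}$, and finish by the layer-cake formula. The paper instead tests once with the bounded nonlinear function $\phi(\tilde\theta_\delta)=sg(\tilde\theta_\delta)\bigl(1-(1+|\tilde\theta_\delta|)^{-\zeta}\bigr)$, with $\zeta=\frac{3-2q}{3-q}$ tuned to the target exponent, obtaining the weighted estimate $\int_\Omega|\nabla\tilde\theta_\delta|^2(1+|\tilde\theta_\delta|)^{-\zeta-1}\,dx\le C$; it then applies H\"older with this weight, the embedding $W^{1,q}_{\Gamma_1\cup\Gamma_L}(\Omega)\hookrightarrow L^{q_*}(\Omega)$, and closes by an absorption argument, since $\Vert\tilde\theta_\delta\Vert_{1.q}$ reappears on the right-hand side only to a sublinear power $\alpha<1$. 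Both treatments handle the one genuinely problem-specific point --- the convection term --- in the same way (it vanishes by incompressibility, $(\upsilon+G)\cdot n=0$ on $\Gamma_0$, and the vanishing trace of the antiderivative on $\Gamma_1\cup\Gamma_L$), and both pin the threshold $q<3/2$. What your route buys is the sharper structural statement that $\nabla\tilde\theta_\delta$ is bounded in the Marcinkiewicz space $L^{3/2,\infty}(\Omega)$ uniformly in $\delta$ and $\tilde\theta$, from which every $q<3/2$ follows at once and the criticality of $3/2$ is transparent. What the paper's route buys is that it avoids distribution functions entirely and, more importantly, the identical $\phi$-test-function computation is reused later (proof of Theorem \ref{main_theorem}) on \emph{differences} $\theta_{m_1}-\theta_{m_2}$ to show the sequence is Cauchy in $W^{1,q}_{\Gamma_1\cup\Gamma_L}(\Omega)$; a uniform weak-norm bound on differences would not by itself yield that convergence, so your method would need an extra adaptation at that later stage.
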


\begin{proof}
Let $\phi: {\mathbb R} \to {\mathbb R}$ given by 
$\displaystyle \phi(t) = sg(t) \left( 1 - \frac{1}{ \bigl( 1 + |t| \bigr)^{\zeta} } \right)$ with $\zeta >0$ to be chosen later and $\displaystyle sg(t) = \frac{t}{|t|}$ if $t \not=0$ , $sg(0) = 0$. Then $\phi(0) =0$, $\phi \in C^1(\mathbb R; \mathbb R)$ and 
\begin{eqnarray} \label{phi1}
\bigl| \phi(t) \bigr| \le 1 , \quad \phi'(t) = \frac{\zeta}{\bigl( 1 + |t| \bigr)^{\zeta + 1} } \quad \forall t \in \mathbb R
\end{eqnarray}
and $\phi'$ is $\zeta(\zeta+1)$-Lipschitz continuous on ${\mathbb R}$.
It follows that for any $w \in W^{1,2}(\Omega)$ we have $\phi (w) \in W^{1,2} (\Omega)$ and 
\begin{eqnarray} \label{phi2}
\nabla \bigl( \phi(w) \bigr) = \phi'(w) \nabla w = \frac{\zeta}{\bigl( 1 + |w| \bigr)^{\zeta + 1} } \nabla w.
\end{eqnarray}
Moreover if $w \in W^{1,2}_{ \Gamma_{1}\cup \Gamma_{L}}(\Omega)$ then $\phi(w) \in W^{1,2}_{ \Gamma_{1}\cup \Gamma_{L}}(\Omega)$. Indeed let $(w_n)_{n \ge 1}$ be a sequence of ${\mathcal D} ({\overline \Omega})$ which converges strongly to $w$ in $W^{1,2}(\Omega)$. Then, with (\ref{phi1})-(\ref{phi2}), we obtain that $\bigl( \phi(w_n) \bigr)_{n \ge 1}$ converges strongly to $\phi(w)$ in $W^{1, 3/2} (\Omega)$. By continuity of the trace operator $\gamma$, the sequences $\bigl( \gamma (w_n) \bigr)_{n \ge 1}$ and $\bigl( \gamma \bigl( \phi (w_n) \bigr) \bigr)_{n \ge 1}$ converge also strongly to $\gamma(w)$ and $\gamma \bigl( \phi(w) \bigr)$ respectively in $L^2 (\partial \Omega)$.  Hence, possibly extracting a subsequence still denoted $(w_n)_{n \ge 1}$, we have 
\begin{eqnarray*}
\begin{array}{ll}
\displaystyle \gamma (w_n) = {w_n}_{| \partial \Omega} \to \gamma (w) \quad \hbox{\rm a.e. on} \ \partial \Omega, \\
\displaystyle \gamma \bigl( \phi (w_n) \bigr) = \phi(w_n)_{| \partial \Omega} \to \gamma \bigl( \phi (w) \bigr) \quad \hbox{\rm a.e. on} \ \partial \Omega.
\end{array}
\end{eqnarray*}
Owing that $w \in W^{1,2}_{ \Gamma_{1}\cup \Gamma_{L}}(\Omega)$ we get
\begin{eqnarray*}
{w_n}_{| \partial \Omega} \to \gamma (w)= 0  \quad \hbox{\rm a.e. on} \ \Gamma_1 \cup \Gamma_L.
\end{eqnarray*}
Thus
\begin{eqnarray*}
\displaystyle \phi(w_n)_{| \partial \Omega} \to 0 = \gamma \bigl( \phi (w) \bigr) \quad \hbox{\rm a.e. on} \  \Gamma_1 \cup \Gamma_L
\end{eqnarray*}
and with (\ref{phi1})
\begin{eqnarray*}
\bigl| \gamma \bigl( \phi (w) \bigr) \bigr| \le 1 \quad \hbox{\rm a.e. on} \  \Gamma_0.
\end{eqnarray*}

Let $\Phi: {\mathbb R} \to {\mathbb R}$ given by $\displaystyle \Phi(t) = \int_0^t \phi(s) \, ds$ for all $t \in {\mathbb R}$. For any $w \in W^{1,2}_{ \Gamma_{1}\cup \Gamma_{L}}(\Omega)$ we have also $\Phi (w) \in W^{1,2}_{ \Gamma_{1}\cup \Gamma_{L}}(\Omega)$ and
\begin{eqnarray} \label{phi3}
\nabla \bigl( \Phi(w) \bigr) = \Phi'(w) \nabla w = \phi(w) \nabla w.
\end{eqnarray}

Now let $\delta >0$, $\tilde \theta \in L^2(\Omega)$ and $\upsilon \in V^p_{0.div}$. By choosing $w = \phi (\tilde \theta_{\delta}) $ as a test-function in (\ref{Lpb}) we get
\begin{eqnarray}\label{F0}
\begin{array}{ll}
& \displaystyle{\int _{\Omega} \bigl( ( \upsilon+G) \cdot\nabla \tilde \theta_{\delta} \bigr) \phi(\tilde \theta_{\delta}) \, dx}
\displaystyle{ + \zeta\int _{\Omega}  \frac{ (K \nabla \tilde \theta_{\delta}) \cdot \nabla \tilde \theta_{\delta} }{(1+|\tilde \theta_{\delta}|)^{\zeta+1}} \, dx}   
 \medskip \\
&\displaystyle{ =\int _{\Omega} g_{\delta}( \tilde \theta , {\upsilon}) \phi(\tilde \theta_{\delta}) \, dx} 
  \displaystyle{ + \int _{\Omega} r(\tilde \theta ) \phi( \tilde \theta_{\delta}) \, dx}  
  \displaystyle{ + \int _{ \Gamma_{0}}\theta^{b}_{\delta} \gamma \bigl( \phi(\tilde \theta_{\delta}) \bigr) \,dx'.}
  \end{array}
\end{eqnarray}
With Green's formula we have 
\begin{eqnarray*}
\displaystyle \int_{\Omega} \bigl( u \cdot \nabla \tilde \theta_{\delta} \bigr) \phi ( \tilde \theta_{\delta} ) \, dx  
= \int_{\Omega} u  \cdot \nabla \bigl( \Phi( \tilde \theta_{\delta}) \bigr) \, dx 
\displaystyle  =  - \int_{\Omega} {\rm div }(u) \Phi(\tilde \theta_{\delta} ) \, dx + \int_{\Gamma_0} u \cdot n \gamma  \bigl( \Phi ( \tilde \theta_{\delta}) \bigr) \, dx'
\end{eqnarray*}
for all $u \in \bigl( {\mathcal D} ({\overline{\Omega}}) \bigr)^3 = {\bf { D}} ({\overline{\Omega}})$. Since $p \ge 3/2$ we have  ${\bf W}^{1,p}(\Omega) \subset {\bf L}^3(\Omega)$ with continuous injection and the trace operator maps continuously ${\bf W}^{1,p}(\Omega)$ into ${\bf L}^2 (\partial \Omega)$.
 By using the density of ${\bf { D}} ({\overline{\Omega}})$ into ${\bf W}^{1,p}(\Omega)$ we obtain that the same equality holds for all $u \in {\bf W}^{1,p}(\Omega)$. With $u = \upsilon +G$ we get finally 
\begin{eqnarray*}
\int _{\Omega} \bigl( ( \upsilon+G) \cdot\nabla \tilde \theta_{\delta} \bigr) \phi(\tilde \theta_{\delta}) \, dx =0.
\end{eqnarray*}
Going back to (\ref{F0}) and using (\ref{TEM2}) we get
\begin{eqnarray*} 
\displaystyle k_{0} \zeta \int _{\Omega}  \frac{|\nabla \tilde \theta_{\delta} |^2}{(1+|\tilde \theta_{\delta}|)^{\zeta+1}}\,dx
\le \int _{\Omega} \bigl| g_{\delta}( \tilde \theta , {\upsilon}) \bigr|  \, dx 
  +\int _{\Omega} \bigl| r( \tilde \theta ) \bigr|\, dx 
  +\int _{ \Gamma_{0}} \bigl| \theta^{b}_{\delta} \bigr|  \,dx' 
 \end{eqnarray*} 
and thus
\begin{eqnarray} \label{mir0}
\displaystyle  \int _{\Omega}  \frac{|\nabla \tilde \theta_{\delta} |^2}{(1+|\tilde \theta_{\delta}|)^{\zeta+1}}\,dx
  \le \frac{1}{k_0 \zeta} \bigl( 2 \mu_1 \Vert D(\upsilon + G) \Vert_{(L^p(\Omega))^{3 \times 3}}^p + |\Omega| \Vert r \Vert_{L^{\infty}(\mathbb R)} +  \Vert \theta^b \Vert_{L^1(\Gamma_0)} \bigr).
 \end{eqnarray} 
Then we estimate $\Vert \tilde \theta_{\delta} \Vert_{1.q}^q$ as follows
 \begin{eqnarray*} 
 \Vert \tilde \theta_{\delta} \Vert_{1.q}^q =  \int _{\Omega} |\nabla \tilde \theta_{\delta} |^q\,dx 
  =   \int _{\Omega}  \frac{|\nabla \tilde \theta_{\delta}
  |^q}{(1+|\tilde \theta_{\delta}|)^{\frac{(\zeta+1)q}{2}}} 
  \bigl(  1+|\tilde \theta_{\delta}| \bigr)^{\frac{(\zeta+1)q}{2}} \,dx .
 \end{eqnarray*}
By using H\"older's inequality we obtain
 \begin{eqnarray*}\label{mir}
  \int _{\Omega} |\nabla \tilde \theta_{\delta} |^q \,dx
  \leq \left( \int _{\Omega} \frac{|\nabla \tilde \theta_{\delta} |^2}{(1+|\tilde \theta_{\delta}|)^{\zeta+1}} \, dx
  \right)^{\frac{q}{2}} \left( \int _{\Omega}  \bigl( 1+|\tilde \theta_{\delta}| \bigr)^{\frac{(\zeta+1)q}{2-q}} \, dx
  \right)^{\frac{2-q}{2}} .
 \end{eqnarray*}
 Now we observe that since $\displaystyle q \in \bigl[1, 3/2 \bigr)$ we have $W^{1,q}(\Omega) \subset L^{q_*}(\Omega)$ with $\displaystyle q_* = \frac{3q}{3-q}$ and we may choose $\zeta >0$ such that $\displaystyle \frac{(\zeta +1) q}{2-q} = q_*$, namely $\displaystyle \zeta = \frac{3 -2q}{3-q}$. With (\ref{mir0}) we get
 \begin{eqnarray*}
 \Vert \tilde \theta_{\delta} \Vert_{1.q} = \left( \int _{\Omega} |\nabla \tilde \theta_{\delta} |^q \,dx \right)^{\frac{1}{q}}
  \leq C_1 \left( \int _{\Omega} \bigl( 1+| \tilde \theta_{\delta}| \bigr)^{q_*} \, dx  \right)^{\frac{2-q}{2q}} 
 \end{eqnarray*} 
 with
 \begin{eqnarray} \label{mir1}
 C_1 =  \frac{1}{ \sqrt{k_0 \zeta}}  \Bigl( 2 \mu_1 \Vert D(\upsilon + G) \Vert_{(L^p(\Omega))^{3 \times 3} }^p + |\Omega| \Vert r \Vert_{L^{\infty}(\mathbb R)} +  \Vert \theta^b \Vert_{L^1(\Gamma_0)} \Bigr)^{1/2} .
 \end{eqnarray}
 Since $q_*>1$ we have 
 \begin{eqnarray*}
 \bigl(1+|\tilde \theta_{\delta}| \bigr)^{q_*} = 2^{q_*} \left( \frac{1}{2} + \frac{|\tilde \theta_{\delta}|}{2} \right)^{q_*} \le 2^{q_* -1}  \bigl(1+|\tilde \theta_{\delta}|^{q_*} \bigr)
 \end{eqnarray*}
 which yields
  \begin{eqnarray*}
 \Vert \tilde \theta_{\delta} \Vert_{1.q} \le C_1  2^{(q_*-1) \frac{2-q}{2q} }  \left( |\Omega| + \int_{\Omega} |\tilde \theta_{\delta}|^{q_*} \, dx  \right)^{\frac{2-q}{2q}}.
 \end{eqnarray*}
Recalling that $\displaystyle \frac{2-q}{2q} \in (0,1)$ we have
\begin{eqnarray*}
1 = \frac{a}{a+b} + \frac{b}{a+b} \le \left( \frac{a}{a+b} \right)^{\frac{2-q}{2q}} + \left( \frac{b}{a+b} \right)^{\frac{2-q}{2q}} \quad \forall (a,b) \in {\mathbb R}_*^+ \times {\mathbb R}^+.
\end{eqnarray*}
So
  \begin{eqnarray*}
 \Vert \tilde \theta_{\delta} \Vert_{1.q} \le C_1 2^{(q_*-1) \frac{2-q}{2q}} \left( |\Omega|^{\frac{2-q}{2q}} + \Vert \tilde \theta_{\delta} \Vert_{L^{q_*}(\Omega)}^{q_*\frac{2-q}{2q}}  \right).
 \end{eqnarray*}
 But $\displaystyle  \frac{2-q}{2q} < \frac{1}{q_*} = \frac{3-q}{3q}$ and $\displaystyle \alpha := q_* \frac{2-q}{2q}  \in (0,1)$. It follows that 
  \begin{eqnarray*}
 \Vert \tilde \theta_{\delta} \Vert_{1.q} = \Vert \tilde \theta_{\delta} \Vert_{1.q}^{\alpha} \Vert \tilde \theta_{\delta} \Vert_{1.q}^{1-\alpha} 
 \le C_1 2^{(q_*-1) \frac{2-q}{2q} } \left( |\Omega|^{\frac{2-q}{2q}} + C_q^{\alpha} \Vert \tilde \theta_{\delta} \Vert_{1.q}^{\alpha}  \right)
 \end{eqnarray*} 
 where $C_q$ denotes the norm of the canonical injection of $ W^{1,q}_{ \Gamma_{1}\cup \Gamma_{L}}(\Omega)$ into $L^{q_*}(\Omega)$. Thus, if $\Vert \tilde \theta_{\delta} \Vert_{1.q} \ge 1$ we have 
  \begin{eqnarray*}
 \Vert \tilde \theta_{\delta} \Vert_{1.q}^{\frac{q}{6-2q}} =  \Vert \tilde \theta_{\delta} \Vert_{1.q}^{1-\alpha} 
 \le C_1  2^{(q_*-1) \frac{2-q}{2q} } \left( \frac{|\Omega|^{\frac{2-q}{2q}} }{\Vert \tilde \theta_{\delta} \Vert_{1.q}^{\alpha}} + C_q^{\alpha}  \right) 
 \le C_1 2^{(q_*-1) \frac{2-q}{2q} } \left( |\Omega|^{\frac{2-q}{2q}} + C_q^{\alpha} \right)
 \end{eqnarray*}  
 and finally
   \begin{eqnarray*}
 \Vert \tilde \theta_{\delta} \Vert_{1.q} \le \max 
 \left( 1,  
  C_1^{\frac{2(3-q)}{q} } 2^{ \frac{(4q-3)(2-q)}{q^2} } \left( |\Omega|^{\frac{2-q}{2q}} + C_q^{\alpha} \right)^{\frac{2(3-q)}{q} } \right).
 \end{eqnarray*}
 \end{proof}

\begin{remark} Let us observe that we may prove an existence result for the approximate heat problem (\ref{heat_approx1})-(\ref{heat_approx1_bc}) by using a fixed point technique. Indeed, let $\tilde T: L^2(\Omega) \to L^2(\Omega)$ be defined by $\tilde T (\tilde \theta) = \tilde \theta_{\delta}$ where $\tilde \theta_{\delta} \in W^{1,2}_{ \Gamma_{1}\cup \Gamma_{L}}(\Omega)$ is the unique solution of (\ref{Lpb}).
Starting from (\ref{estim2}) we obtain immediately that $\tilde T \bigl( {\overline B}_{L^2(\Omega ) } (0, C'_{Poincare} R_{\delta} ) \bigr) \subset {\overline B}_{L^2(\Omega) } (0, C'_{Poincare} R_{\delta} )$ and $\tilde T \bigl( {\overline B}_{L^2(\Omega) } (0, C'_{Poincare} R_{\delta} ) \bigr)$ is relatively compact in $L^2(\Omega)$, where we recall that $C'_{Poincare}$ is the Poincar\'e's constant on $W^{1,2}_{ \Gamma_{1}\cup \Gamma_{L}}(\Omega)$. Then, by using the continuity properties and the uniform boundedness of the mappings $\tilde \theta \mapsto g_{\delta} (\tilde \theta, \upsilon)$ and $\tilde \theta \mapsto r(\tilde \theta)$ we can check the operator $\tilde T$ is continuous on $L^2(\Omega)$. Finally Schauder's fixed point theorem gives the existence of a solution to problem (\ref{heat_approx1})-(\ref{heat_approx1_bc}) as a fixed point of the operator $\tilde T$. Unfortunately this technique does not provide uniqueness and the non-linearity of the problem does not allow us to expect uniqueness unless we introduce further assumptions, like for instance some Lipschitz continuity properties for the mappings $\mu$ and $r$ with respect  to the temperature. 
\end{remark}


\section{Existence result for the  coupled problem (P)} \label{section5}

We study in this section the  coupled problem (P). We proceed in two steps. First we will prove the existence of an approximate coupled problem depending on the parameter $\delta$ where the heat problem (\ref{TEM1})-(\ref{TEM3}) is replaced by (\ref{heat_approx1})-(\ref{heat_approx1_bc}) and then we will pass to the limit as $\delta$ tends to zero in order to obtain a solution to problem (P).

More precisely we prove first

\begin{proposition} \label{prop4}
Let $p \ge 3/2$ 
and assume that (\ref{eqG})-(\ref{eqfk}), (\ref{rop})-(\ref{m5})-(\ref{mlo}) and (\ref{TEM2})-(\ref{TEM2bis})-(\ref{Cr})-(\ref{thetab}) hold. Then for any $\delta>0$ there exists $(\tilde \theta_{\delta}, \tilde \upsilon_{\delta}) \in W^{1,2}_{ \Gamma_{1}\cup \Gamma_{L}}(\Omega) \times V_{0.div}^{p}$ such that 
\begin{eqnarray} \label{flow_m}
\displaystyle
a(\tilde \theta_{\delta}; \tilde \upsilon_{\delta} , \varphi- \tilde  \upsilon_{\delta})
+\Psi(\varphi)-\Psi(\tilde  \upsilon_{\delta}) 
\geq \int_{\Omega} f\cdot (\varphi- \tilde  \upsilon_{\delta})\,dx 
\quad \forall \varphi\in V_{0.div}^{p}
\end{eqnarray}
and
\begin{eqnarray}  \label{heat_m}
B(\tilde  \upsilon_{\delta}, \tilde  \theta_{\delta}, w) = L_{\delta} (\tilde  \upsilon_{\delta}, \tilde  \theta_{\delta}, w) \quad \forall w \in W^{1,2}_{ \Gamma_{1}\cup \Gamma_{L}}(\Omega). 
\end{eqnarray}
\end{proposition}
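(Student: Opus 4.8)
The plan is to obtain a solution of (\ref{flow_m})--(\ref{heat_m}) as a fixed point. Fix $\delta>0$ and define a map $\Lambda$ acting on pairs $(\theta,\upsilon)$ as follows: given $\theta\in L^2(\Omega)$ and $\upsilon\in V_{0.div}^p$, first let $\tilde\upsilon\in V_{0.div}^p$ be the unique solution of the auxiliary flow problem (\ref{flow_aux}) associated with the parameters $(\theta,\upsilon)$, whose existence and uniqueness are granted by Theorem \ref{th1}; then let $\tilde\theta_\delta\in W^{1,2}_{\Gamma_1\cup\Gamma_L}(\Omega)$ be the unique solution of the linearized heat problem $B(\tilde\upsilon,\tilde\theta_\delta,w)=L_\delta(\tilde\upsilon,\theta,w)$ furnished by Proposition \ref{prop2}, and set $\Lambda(\theta,\upsilon)=(\tilde\theta_\delta,\tilde\upsilon)$. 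The crucial point is that the velocity entering the heat data is the \emph{output} velocity $\tilde\upsilon$, not the input $\upsilon$; this will be decisive for the continuity argument. At a fixed point $(\theta,\upsilon)=\Lambda(\theta,\upsilon)$ one has $\upsilon=\tilde\upsilon$ and $\theta=\tilde\theta_\delta$, so the frozen velocity in the flow operator coincides with the solution itself and (\ref{flow_aux}) becomes exactly (\ref{flow_m}) with $\tilde\upsilon_\delta:=\upsilon$, while $B(\tilde\upsilon_\delta,\tilde\theta_\delta,w)=L_\delta(\tilde\upsilon_\delta,\tilde\theta_\delta,w)$ is precisely (\ref{heat_m}).

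Next I would check that $\Lambda$ maps a suitable compact convex set into itself. Proposition \ref{prop1} gives $\Vert\tilde\upsilon\Vert_{1.p}\le\mathcal{C}_{flow}$ and the estimate (\ref{estim2}) gives $\Vert\tilde\theta_\delta\Vert_{1.2}\le R_\delta$, both independent of the inputs. Hence, setting
\[
K=\bigl\{(\theta,\upsilon)\in W^{1,2}_{\Gamma_1\cup\Gamma_L}(\Omega)\times V_{0.div}^p:\ \Vert\theta\Vert_{1.2}\le R_\delta,\ \Vert\upsilon\Vert_{1.p}\le\mathcal{C}_{flow}\bigr\},
\]
viewed as a subset of $L^2(\Omega)\times{\bf L}^p(\Omega)$, we have $\Lambda(K)\subset K$. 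The set $K$ is convex and, being bounded in $W^{1,2}\times{\bf W}^{1,p}$, it is relatively compact in $L^2\times{\bf L}^p$ by the Rellich--Kondrachov theorem; weak lower semicontinuity of the norms shows it is also closed, hence compact. Thus Schauder's theorem will apply once continuity of $\Lambda$ on $K$ for the $L^2\times{\bf L}^p$ topology is established.

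The continuity is the heart of the proof, and the main obstacle lies in the flow component, since the nonlinearity $|D(\cdot)|^p$ in $g_\delta$ cannot be passed to the limit under mere weak convergence. Take $(\theta_n,\upsilon_n)\to(\theta,\upsilon)$ in $K$ and write $\tilde\upsilon_n$ for the corresponding flow solutions, which are bounded in $V_{0.div}^p$; extract a subsequence with $\tilde\upsilon_n\rightharpoonup\bar\upsilon$ in $V_{0.div}^p$ and strongly in ${\bf L}^p(\Omega)$. Testing the $n$-th inequality with $\varphi=\bar\upsilon$, using the compact trace embedding (so that $\Psi(\tilde\upsilon_n)\to\Psi(\bar\upsilon)$) and $f\in{\bf L}^{p'}(\Omega)$, I would obtain $\limsup_n\langle\mathbf{A}_{\theta_n,\upsilon_n}(\tilde\upsilon_n),\tilde\upsilon_n-\bar\upsilon\rangle\le0$. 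Subtracting $\int_\Omega\mathcal{F}(\theta_n,\upsilon_n+G,D(\bar\upsilon+G)):D(\tilde\upsilon_n-\bar\upsilon)\,dx$, which tends to zero because $\theta_n\to\theta$ and $\upsilon_n\to\upsilon$ a.e.\ while $\mathcal{F}$ is continuous and dominated by (\ref{mlo}) and $D(\tilde\upsilon_n-\bar\upsilon)\rightharpoonup0$ weakly, the monotonicity of Lemma \ref{lemma1} shows that
\[
\int_\Omega\bigl(\mathcal{F}(\theta_n,\upsilon_n+G,D(\tilde\upsilon_n+G))-\mathcal{F}(\theta_n,\upsilon_n+G,D(\bar\upsilon+G))\bigr):D(\tilde\upsilon_n-\bar\upsilon)\,dx\ge0
\]
and tends to zero. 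Splitting off the factor $\mu_0$ as in the uniqueness proof of Theorem \ref{th1} and invoking (\ref{ineg_case1}) when $p\ge2$, respectively the H\"older-type bound built from (\ref{inegp}) when $1<p<2$, I would conclude $\Vert D(\tilde\upsilon_n-\bar\upsilon)\Vert_{(L^p(\Omega))^{3\times3}}\to0$, hence $\tilde\upsilon_n\to\bar\upsilon$ strongly in $V_{0.div}^p$ by Korn's inequality (\ref{eq320}). Passing to the limit in the $n$-th inequality then shows $\bar\upsilon$ solves (\ref{flow_aux}) for the limit parameters, so by uniqueness $\bar\upsilon=\tilde\upsilon$ and the whole sequence converges strongly.

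For the heat component the $\delta$-regularization makes matters easier. With $\tilde\upsilon_n\to\tilde\upsilon$ strongly in $V_{0.div}^p$ and $\theta_n\to\theta$ in $L^2(\Omega)$, hence a.e.\ along a subsequence, the bound $\Vert g_\delta(\theta_n,\tilde\upsilon_n)\Vert_{L^\infty}\le1/\delta$ from (\ref{m_delta_1}) together with the a.e.\ convergence of $D(\tilde\upsilon_n+G)$ and the continuity of $\mu$ and $r$ give, by dominated convergence, $g_\delta(\theta_n,\tilde\upsilon_n)\to g_\delta(\theta,\tilde\upsilon)$ and $r(\theta_n)\to r(\theta)$ in $L^2(\Omega)$, so the data $L_\delta(\tilde\upsilon_n,\theta_n,\cdot)$ converge. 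Since the solutions $\tilde\theta_{\delta,n}$ are bounded by $R_\delta$ in $W^{1,2}_{\Gamma_1\cup\Gamma_L}(\Omega)$, a weak limit exists; using $\tilde\upsilon_n+G\to\tilde\upsilon+G$ strongly in ${\bf L}^3(\Omega)$, which holds because $p\ge3/2$, to pass to the limit in the convection term, and the uniqueness in Proposition \ref{prop2}, the weak limit is identified with the heat solution for the limit data, and the compact embedding $W^{1,2}\hookrightarrow L^2$ upgrades this to strong convergence in $L^2(\Omega)$. Thus $\Lambda$ is continuous on $K$, and Schauder's fixed point theorem provides a fixed point $(\tilde\theta_\delta,\tilde\upsilon_\delta)\in K$, which by the first paragraph solves (\ref{flow_m})--(\ref{heat_m}).
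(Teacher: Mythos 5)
Your proposal is correct, and it follows the paper's global strategy almost exactly: the same fixed-point operator (flow solution for the frozen input pair, then heat solution driven by the \emph{output} velocity and the \emph{input} temperature), the same invariant-set and compactness setup, the same dominated-convergence treatment of the heat component, and Schauder's theorem at the end. The one step where you take a genuinely different route is the continuity of the flow component of the map. The paper never extracts a weakly convergent subsequence of the outputs: it tests the $n$-th inequality with the limit-parameter solution $\tilde\upsilon_{\delta}$ and the limit-parameter inequality with $\tilde\upsilon_{\delta}^n$, adds, splits $\mathcal{F}=\mathcal{F}_0+\overline{\mathcal F}$ exactly as in the uniqueness part of Theorem \ref{th1}, drops the monotone $\overline{\mathcal F}$-difference taken at equal parameters, and is left (for $p\ge 2$, with the analogue via (\ref{inegp}) when $1<p<2$) with
\begin{eqnarray*}
\frac{\mu_0}{2^{p-1}}\,\bigl\Vert D(\tilde\upsilon_{\delta}^n-\tilde\upsilon_{\delta})\bigr\Vert^{p-1}_{(L^p(\Omega))^{3\times 3}}
\le \Bigl\Vert \overline{\mathcal F}\bigl(\tilde\theta,\tilde\upsilon+G,D(\tilde\upsilon_{\delta}+G)\bigr)
-\overline{\mathcal F}\bigl(\tilde\theta_n,\tilde\upsilon_n+G,D(\tilde\upsilon_{\delta}+G)\bigr)\Bigr\Vert_{(L^{p'}(\Omega))^{3\times 3}},
\end{eqnarray*}
whose right-hand side tends to zero by Lebesgue's theorem because the third argument is the \emph{fixed} field $D(\tilde\upsilon_{\delta}+G)$. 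You instead extract $\tilde\upsilon_n\rightharpoonup\bar\upsilon$, prove $\limsup_n\langle\mathbf{A}_{\theta_n,\upsilon_n}(\tilde\upsilon_n),\tilde\upsilon_n-\bar\upsilon\rangle\le 0$ (this requires compactness of the trace operator $W^{1,p}(\Omega)\to L^p(\Gamma_0)$ so that $\Psi(\tilde\upsilon_n)\to\Psi(\bar\upsilon)$ — a point you should state explicitly, since weak convergence alone does not give it), run a Minty-type argument, and only then identify $\bar\upsilon$ with the limit-parameter solution through uniqueness. Both versions ultimately rest on the same strong-monotonicity inequalities (\ref{ineg_case1}) and (\ref{inegp}) plus Korn's inequality (\ref{eq320}); the paper's is shorter, avoids weak convergence, trace compactness and the subsequence-plus-uniqueness identification, and gives a quantitative modulus of continuity, whereas yours is the more standard pseudo-monotonicity scheme and would survive in situations where one cannot compare directly against a known limit solution — although note that you still invoke uniqueness at the end, so no hypothesis is actually saved here.
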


\begin{proof}
We define the operator $T: L^2(\Omega) \times {\bf L}^p(\Omega)  \to L^2(\Omega) \times {\bf L}^p(\Omega) $ by 
$T (\tilde \theta, \tilde \upsilon ) = ( \tilde \theta_{\delta}, \tilde \upsilon_{\delta}) $ where 
$\tilde \upsilon_{\delta} \in V^p_{0.div}$ is the unique solution of problem (\ref{flow_aux}) with $\upsilon = \tilde \upsilon$ and $\theta = \tilde \theta$ i.e.
\begin{eqnarray*}
\bigl\langle \mathbf{A}_{\tilde \theta, \tilde \upsilon} (\tilde \upsilon_{\delta}) , {\varphi} - \tilde \upsilon_{\delta} \bigr\rangle
+\Psi(\varphi)-\Psi(\tilde \upsilon_{\delta}) 
\geq \int_{\Omega} f\cdot (\varphi- \tilde \upsilon_{\delta})\,dx 
\quad \forall \varphi\in V_{0.div}^{p}
\end{eqnarray*}
and 
$\tilde \theta_{\delta} \in W^{1,2}_{ \Gamma_{1}\cup \Gamma_{L}}(\Omega)$ is the unique solution of (\ref{Lpb}) with $\upsilon = \tilde \upsilon_{\delta}$ i.e.
\begin{eqnarray*}
B(\tilde \upsilon_{\delta}, \tilde \theta_{\delta},w)=L_{\delta} (\tilde \upsilon_{\delta}, \tilde \theta ,w) \quad \forall w\in W^{1,2}_{ \Gamma_{1}\cup \Gamma_{L}}(\Omega).
\end{eqnarray*}

We already know with (\ref{estim2}) that
\begin{eqnarray*}
\begin{array}{ll}
\displaystyle \Vert \tilde \theta_{\delta} \Vert_{L^2(\Omega)} \le C'_{Poincare} \Vert \tilde \theta_{\delta} \Vert_{1.2} 
\\ 
\displaystyle
 \le C'_{Poincare} R_{\delta} := \frac{C'_{Poincare}}{k_0} \left( C'_{Poincare} \left( \frac{1}{\delta} + \| r\|_{L^{\infty} (\mathbb{R})} \right) |\Omega|^{1/2} +  \frac{C_{\gamma}}{\delta} |\Gamma_0|^{1/2}
\right)
\end{array}
\end{eqnarray*}
where we recall that $C'_{Poincare}$ denotes the Poincar\'e's constant on $W^{1,2}_{ \Gamma_{1}\cup \Gamma_{L}}(\Omega)$ and from Proposition \ref{prop1} that
\begin{eqnarray*}
\Vert \tilde \upsilon_{\delta} \Vert_{{\bf L}^p(\Omega)} \le C_{Poincare} \Vert \tilde \upsilon_{\delta} \Vert_{1.p} \le C_{Poincare} {\mathcal C}_{flow}
\end{eqnarray*}
where $C_{Poincare}$ is the Poincar\'e's constant on $W^{1,p}_{ \Gamma_{1}\cup \Gamma_{L}}(\Omega)$. 

\noindent We infer that $ T \bigl( {\overline B}_{L^2(\Omega ) } (0, C'_{Poincare} R_{\delta} ) \times {\overline B}_{{\bf L}^p(\Omega ) } (0, C_{Poincare} {\mathcal C}_{flow} ) \bigr) $ is included into $ {\overline B}_{L^2(\Omega ) } (0, C'_{Poincare} R_{\delta} ) \times {\overline B}_{{\bf L}^p(\Omega ) } (0, C_{Poincare} {\mathcal C}_{flow} )$ and $ T \bigl( {\overline B}_{L^2(\Omega ) } (0, C'_{Poincare} R_{\delta} ) \times {\overline B}_{{\bf L}^p(\Omega ) } (0, C_{Poincare} {\mathcal C}_{flow} ) \bigr)$ is relatively compact in $L^2(\Omega) \times {\bf L}^p (\Omega) $.

\smallskip

Let us prove now that $T$ is continuous on $L^2(\Omega) \times {\bf L}^p(\Omega )$.
So let  $(\tilde\theta_{n}, \tilde \upsilon_n)_{n\geq 0}$  be 
a sequence of $L^2(\Omega) \times {\bf L}^p(\Omega )$  which  converges strongly to  $(\tilde \theta, \tilde \upsilon)$.
For all $n \ge 0$ we define $ ( \tilde \theta_{\delta}^n, \tilde \upsilon_{\delta}^n) = T (\tilde \theta_n, \tilde \upsilon_n )  $ and we have to prove that $( \tilde \theta_{\delta}^n, \tilde \upsilon_{\delta}^n)_{n \ge 0}$ converges strongly in $L^2(\Omega) \times {\bf L}^p (\Omega)$ to $( \tilde \theta_{\delta}, \tilde \upsilon_{\delta}) = T (\tilde \theta, \tilde \upsilon )  $.

For all $n \ge 0$ we have 
\begin{eqnarray*}
\bigl\langle \mathbf{A}_{\tilde \theta_n, \tilde \upsilon_n} (\tilde \upsilon_{\delta}^n) , {\varphi} - \tilde \upsilon_{\delta}^n \bigr\rangle
+\Psi(\varphi)-\Psi(\tilde \upsilon_{\delta}^n) 
\geq \int_{\Omega} f\cdot (\varphi- \tilde \upsilon_{\delta}^n)\,dx 
\quad \forall \varphi\in V_{0.div}^{p}
\end{eqnarray*}
and we have also
\begin{eqnarray*}
B(\tilde \upsilon_{\delta}^n, \tilde \theta_{\delta}^n,w)=L_{\delta} (\tilde \upsilon_{\delta}^n, \tilde \theta_n ,w) \quad \forall w\in W^{1,2}_{ \Gamma_{1}\cup \Gamma_{L}}(\Omega).
\end{eqnarray*}
By choosing $\varphi = \tilde \upsilon_{\delta}$ then $\varphi = \tilde \upsilon_{\delta}^n$ we get 
\begin{eqnarray*} 
\begin{array}{ll}
\displaystyle 0 \ge \bigl\langle \mathbf{A}_{\tilde \theta_{n}, \tilde \upsilon_{n}} (\tilde \upsilon_{\delta}^n) - \mathbf{A}_{\tilde \theta, \tilde \upsilon} (\tilde \upsilon_{\delta}), \tilde \upsilon_{\delta}^n - \tilde \upsilon_{\delta}  \bigr\rangle \\
\displaystyle
= \mu_0 \int_{\Omega} \bigl( \bigl|D(  \tilde \upsilon_{\delta}^n +G) \bigr|^{p-2} D( \tilde \upsilon_{\delta}^n + G) 
- \bigl|D( \tilde \upsilon_{\delta} +G) \bigr|^{p-2} D(\tilde \upsilon_{\delta} + G) \Bigr) : D( \tilde \upsilon_{\delta}^n - \tilde \upsilon_{\delta}) \, dx \\
\displaystyle 
+ \int_{\Omega} \Bigl( {\overline {\mathcal F}} \bigl( \tilde \theta_{n}, \tilde \upsilon_n + G, D(\tilde \upsilon_{\delta}^n  +G) \bigr)
- {\overline {\mathcal F}} \bigl(\tilde  \theta, \tilde \upsilon +G, D(\tilde \upsilon_{\delta}  +G) \bigr) \Bigr)  : D(  \tilde \upsilon_{\delta}^n - \tilde \upsilon_{\delta}) \, dx \\
\displaystyle 
=  \mu_0 \int_{\Omega} \bigl( \bigl|D( \tilde \upsilon_{\delta}^n +G) \bigr|^{p-2} D( \tilde \upsilon_{\delta}^n + G) 
- \bigl|D( \tilde \upsilon_{\delta} +G) \bigr|^{p-2} D(\tilde \upsilon_{\delta} + G) \Bigr) : D( \tilde \upsilon_{\delta}^n - \tilde \upsilon_{\delta}) \, dx \\
\displaystyle 
\displaystyle 
+ \int_{\Omega} \Bigl( {\overline {\mathcal F}} \bigl( \tilde \theta_{n}, \tilde \upsilon_n +G, D( \tilde \upsilon_{\delta}^n  +G) \bigr)
- {\overline {\mathcal F}} \bigl(\tilde  \theta_{n}, \tilde \upsilon_n +G, D(\tilde \upsilon_{\delta}  +G) \bigr) \Bigr)  : D( \tilde \upsilon_{\delta}^n - \tilde \upsilon_{\delta}) \, dx \\
\displaystyle
+ \int_{\Omega} \Bigl( {\overline {\mathcal F}} \bigl( \tilde \theta_{n}, \tilde \upsilon_n +G, D( \tilde \upsilon_{\delta}  +G) \bigr)
- {\overline {\mathcal F}} \bigl(\tilde  \theta, \tilde \upsilon +G, D(\tilde \upsilon_{\delta}  +G) \bigr) \Bigr)  : D(  \tilde \upsilon_{\delta}^n - \tilde \upsilon_{\delta}) \, dx
\end{array}
\end{eqnarray*}
and thus
\begin{eqnarray*} 
\begin{array}{ll}
\displaystyle \mu_0 \int_{\Omega} \bigl( \bigl|D( \tilde \upsilon_{\delta}^n +G) \bigr|^{p-2} D( \tilde \upsilon_{\delta}^n + G) 
- \bigl|D( \tilde \upsilon_{\delta} +G) \bigr|^{p-2} D(\tilde \upsilon_{\delta} + G) \Bigr) : D( \tilde \upsilon_{\delta}^n - \tilde \upsilon_{\delta}) \, dx \\
\displaystyle 
\le \int_{\Omega} \Bigl( {\overline {\mathcal F}} \bigl( \tilde \theta, \tilde \upsilon +G, D( \tilde \upsilon_{\delta}  +G) \bigr)
- {\overline {\mathcal F}} \bigl( \tilde \theta_{n}, \tilde \upsilon_n +G , D( \tilde \upsilon_{\delta}  +G) \bigr) \Bigr)  : D(  \tilde \upsilon_{\delta}^n - \tilde \upsilon_{\delta}) \, dx \\
\displaystyle 
\le  \Bigl\| {\overline {\mathcal F}} \bigl(\tilde \theta, \tilde \upsilon + G, D( \tilde \upsilon_{\delta}  +G) \bigr)
- {\overline {\mathcal F}} \bigl( \tilde  \theta_{n}, \tilde \upsilon_n + G, D(\tilde \upsilon_{\delta}  +G) \bigr) \Bigr\|_{( L^{p'}(\Omega))^{3 \times 3}} 
 \Vert D( \tilde \upsilon_{\delta}^n - \tilde \upsilon_{\delta}) \Vert_{( L^p(\Omega))^{3 \times 3}}.
\end{array}
\end{eqnarray*}
Let us distinguish  two cases.

\smallskip

{\bf Case 1:} $p \ge 2$.

\smallskip

With (\ref{ineg_case1}) we get
\begin{eqnarray*}
\begin{array}{ll}
\displaystyle \mu_0 \frac{1}{2^{p-1}}  \Vert  D( \tilde \upsilon_{\delta}^n - \tilde \upsilon_{\delta}) \Vert^p_{(L^p( \Omega))^{3 \times 3}} \\
\displaystyle 
\le 
 \mu_0 \int_{\Omega} \bigl( \bigl|D( \tilde \upsilon_{\delta}^n +G) \bigr|^{p-2} D( \tilde \upsilon_{\delta}^n + G) 
- \bigl|D( \tilde \upsilon_{\delta} +G) \bigr|^{p-2} D(\tilde \upsilon_{\delta} + G) \Bigr) : D( \tilde \upsilon_{\delta}^n  - \tilde \upsilon_{\delta} ) \, dx \\
\displaystyle 
\le  \Bigl\| {\overline {\mathcal F}} \bigl(\tilde \theta, \tilde \upsilon + G, D( \tilde \upsilon_{\delta}  +G) \bigr)
- {\overline {\mathcal F}} \bigl( \tilde \theta_{n}, \tilde \upsilon_n + G, D( \tilde \upsilon_{\delta}  +G) \bigr) \Bigr\|_{ (L^{p'}(\Omega))^{3 \times 3}} 
 \Vert D( \tilde  \upsilon_{\delta}^n  - \tilde \upsilon_{\delta}) \Vert_{( L^p(\Omega))^{3 \times 3}}
\end{array}
\end{eqnarray*}
and thus
\begin{eqnarray*}
\displaystyle \mu_0 \frac{1}{2^{p-1}}  \Vert  D( \tilde  \upsilon_{\delta}^n - \tilde \upsilon_{\delta}) \Vert^{p-1}_{(L^p( \Omega))^{3 \times 3}}
\le \Bigl\| {\overline {\mathcal F}} \bigl( \tilde \theta, \tilde \upsilon +G , D( \tilde \upsilon_{\delta}  +G) \bigr)
- {\overline {\mathcal F}} \bigl( \tilde \theta_{n}, \tilde \upsilon_n + G, D( \tilde \upsilon_{\delta}  +G) \bigr) \Bigr\|_{( L^{p'}(\Omega))^{3 \times 3}} .
\end{eqnarray*}
By possibly extracting a subsequence, still denoted $(\tilde \theta_n, \tilde \upsilon_n)_{n  \ge 0}$, we have 
\begin{eqnarray*}
\tilde \theta_n \to \tilde \theta , \quad \tilde \upsilon_n \to \tilde \upsilon \quad \hbox{\rm a.e. in $\Omega$.}
\end{eqnarray*}
Then the continuity and uniform boundedness of the mapping ${\overline \mu}$ combined with Lebesgue's theorem imply that 
\begin{eqnarray*}
{\overline {\mathcal F}} \bigl( \tilde \theta_{n}, \tilde \upsilon_n + G, D( \tilde \upsilon_{\delta}  +G) \bigr) \Bigr) 
\to {\overline {\mathcal F}} \bigl( \tilde \theta, \tilde \upsilon +G , D(\tilde  \upsilon_{\delta}  +G) \bigr) \Bigr) \quad \hbox{\rm strongly in $(L^{p'}(\Omega))^{3 \times 3}$}.
\end{eqnarray*}
Thus 
\begin{eqnarray*}
\tilde \upsilon_{\delta}^n \to \tilde \upsilon_{\delta} \quad \hbox{\rm strongly in $V_{0.div}^p$.}
\end{eqnarray*}
Recalling that $(\tilde \upsilon_{\delta}^n)_{n \ge 0}$ is bounded in $V^p_{0.div}$, we may conclude that the whole sequence $(\tilde \upsilon_{\delta}^n)_{n \ge 0}$ converges strongly to $\tilde \upsilon_{\delta}$ in $V^p_{0.div}$.

\smallskip

\noindent {\bf Case 2:} $1<p<2$.

\smallskip

With (\ref{inegp}) and the same computations as in Theorem \ref{th1} we have
\begin{eqnarray*}
\begin{array}{ll}
\displaystyle C_p \int_{\Omega} \bigl| D( \tilde  \upsilon_{\delta}^n - \tilde \upsilon_{\delta}) \bigr|^p \, dx 
\\
\displaystyle
\le \left( \int_{\Omega} \bigl( \bigl|D( \tilde  \upsilon_{\delta}^n +G) \bigr|^{p-2} D( \tilde \upsilon_{\delta}^n + G) 
- \bigl|D( \tilde \upsilon_{\delta} +G) \bigr|^{p-2} D(\tilde \upsilon_{\delta} + G) \Bigr) : D(  \tilde \upsilon_{\delta}^n - \tilde \upsilon_{\delta}) \, dx \right)^{\frac{p}{2}} \\
\displaystyle 
\qquad  \times \left( \int_{\Omega} \Bigl(  \bigl| D( \tilde  \upsilon_{\delta}^n + G ) \bigr|^p +  \bigl| D( \tilde \upsilon_{\delta} + G ) \bigr|^p \Bigr) \, dx \right)^{\frac{2-p}{2}}
\end{array}
\end{eqnarray*}
and with Proposition \ref{prop1} we obtain
\begin{eqnarray*}
\begin{array}{ll}
\displaystyle  \frac{ C_p}{ ( 2^p ( C_{flow}^p + \Vert D(G) \Vert^p_{ (L^p(\Omega))^{3 \times 3} } ))^{\frac{2-p}{2}}}  \int_{\Omega} \bigl| D( \tilde  \upsilon_{\delta}^n - \tilde \upsilon_{\delta}) \bigr|^p \, dx \\
\displaystyle 
\le \left( \int_{\Omega} \bigl( \bigl|D( \tilde \upsilon_{\delta}^n +G) \bigr|^{p-2} D( \tilde \upsilon_{\delta}^n + G) 
- \bigl|D( \tilde \upsilon_{\delta} +G) \bigr|^{p-2} D(\tilde \upsilon_{\delta} + G) \Bigr) : D( \tilde  \upsilon_{\delta}^n - \tilde \upsilon_{\delta}) \, dx \right)^{\frac{p}{2}} .
\end{array}
\end{eqnarray*}
Thus
\begin{eqnarray*}
\begin{array}{ll}
\displaystyle \left( \frac{ C_p}{(2^p (C_{flow}^p + \Vert D(G) \Vert^p_{(L^p(\Omega))^{3 \times 3}} ))^{\frac{2-p}{2}}} \right)^{\frac{2}{p}}   \Vert D( \tilde  \upsilon_{\delta}^n - \tilde \upsilon_{\delta}) \Vert_{( L^p(\Omega))^{3 \times 3}} 
\\
\displaystyle 
\le \frac{1}{\mu_0} \Bigl\| {\overline {\mathcal F}} \bigl( \tilde \theta, \tilde \upsilon, D( \tilde \upsilon_{\delta}  +G) \bigr)
- {\overline {\mathcal F}} \bigl( \tilde \theta_{n}, \tilde \upsilon_n, D(\tilde  \upsilon_{\delta}  +G) \bigr) \Bigr\|_{( L^{p'}(\Omega))^{3 \times 3}}
\end{array}
\end{eqnarray*}
and we may conclude as in the previous case.

\smallskip

Next for all $n \ge 0$ we have 
\begin{eqnarray} \label{point-fixe}
B(\tilde \upsilon_{\delta}^n, \tilde \theta_{\delta}^n,w)=L_{\delta} (\tilde \upsilon_{\delta}^n, \tilde \theta_n ,w) \quad \forall w\in W^{1,2}_{ \Gamma_{1}\cup \Gamma_{L}}(\Omega).
\end{eqnarray}
We already know that, possibly extracting a subsequence, still denoted $(\tilde \theta_n, \tilde \upsilon_n)_{n  \ge 0}$, we have 
\begin{eqnarray*}
\tilde \theta_n \to \tilde \theta , \quad \tilde \upsilon_n \to \tilde \upsilon \quad \hbox{\rm a.e. in $\Omega$.}
\end{eqnarray*}
With the previous result we infer that, possibly extracting another subsequence, we have
\begin{eqnarray*}
\tilde \upsilon_{\delta}^n + G \to \tilde \upsilon_{\delta} +G, \quad D(\tilde \upsilon_{\delta}^n +G) \to D( \tilde \upsilon_{\delta} +G) \quad \hbox{\rm a.e. in $\Omega$.}
\end{eqnarray*}
By using the continuity and the uniform boundedness of the mappings $\mu$ and $r$ combined with Lebesgue's theorem we obtain that 
\begin{eqnarray*}
g_{\delta} ( \tilde \theta_n,  \tilde \upsilon_{\delta}^n) \to g_{\delta} ( \tilde \theta, \tilde \upsilon_{\delta}) \quad \hbox{\rm strongly in $L^2(\Omega)$}
\end{eqnarray*}
and 
\begin{eqnarray*}
r( \tilde \theta_n) \to r(\tilde \theta) \quad \hbox{\rm strongly in $L^2(\Omega)$.}
\end{eqnarray*}
On the other hand the sequence $(\tilde \theta_{\delta}^n)_{n \ge 0}$ is bounded in $W^{1,2}_{ \Gamma_{1}\cup \Gamma_{L}}(\Omega)$, so possibly extracting another subsequence there exists $\tilde \theta_{\delta}^* \in W^{1,2}_{ \Gamma_{1}\cup \Gamma_{L}}(\Omega)$ such that 
\begin{eqnarray*}
\tilde \theta_{\delta}^n \to \tilde \theta_{\delta}^* \quad \hbox{\rm weakly in $W^{1,2}_{ \Gamma_{1}\cup \Gamma_{L}}(\Omega)$ and strongly in $L^2(\Omega)$.}
\end{eqnarray*}
By passing to the limit in (\ref{point-fixe}) as $n$ tends to $+ \infty$ we obtain that
\begin{eqnarray*} 
B(\tilde \upsilon_{\delta}, \tilde \theta_{\delta}^*,w)=L_{\delta} (\tilde \upsilon_{\delta}, \tilde \theta ,w) \quad \forall w\in W^{1,2}_{ \Gamma_{1}\cup \Gamma_{L}}(\Omega).
\end{eqnarray*}
By uniqueness of the solution of (\ref{Lpb}) with $\upsilon = \tilde \upsilon_{\delta}$ we infer that $\tilde \theta_{\delta}^* = \tilde \theta_{\delta}$ and the whole sequence $(\tilde \theta_{\delta}^n)_{n \ge 0}$ converges strongly in $L^2(\Omega)$ to $\tilde \theta_{\delta}$.

\smallskip

Finally Schauder's fixed point theorem allows us to conclude.

\end{proof}


Let us consider now the sequence $(\theta_m, \upsilon_m)_{m \ge 1}$  defined by $(\theta_m, \upsilon_m) = (\tilde \theta_{1/m}, \tilde \upsilon_{1/m})$ for all $m \ge 1$ i.e.
\begin{eqnarray} \label{flow_m_bis}
\displaystyle
a(\tilde \theta_{1/m}; \tilde \upsilon_{1/m} , \varphi- \tilde  \upsilon_{1/m})
+\Psi(\varphi)-\Psi(\tilde  \upsilon_{1/m}) 
\geq \int_{\Omega} f\cdot (\varphi- \tilde  \upsilon_{1/m})\,dx 
\quad \forall \varphi\in V_{0.div}^{p}
\end{eqnarray}
and
\begin{eqnarray}  \label{heat_m_bis}
B(\tilde  \upsilon_{1/m}, \tilde  \theta_{1/m}, w) = L_{1/m} (\tilde  \upsilon_{1/m}, \tilde  \theta_{1/m}, w) \quad \forall w \in W^{1,2}_{ \Gamma_{1}\cup \Gamma_{L}}(\Omega). 
\end{eqnarray}

Let $\displaystyle q \in \bigl[1, 3/2 \bigr)$ and $p>3$ if $q=1$, $\displaystyle p \ge \frac{3q}{4q-3}$ otherwise.
With the results of Theorem \ref{th1}, Proposition \ref{prop1} and Proposition \ref{prop3} we know that the sequences $(\upsilon_m)_{m \ge 1}$ and $(\theta_m)_{m \ge 1}$ are bounded in $ V_{0.div}^{p}$ and in $ W^{1,q}_{ \Gamma_{1}\cup \Gamma_{L}}(\Omega)$ respectively. Reminding that we have to  deal with any $\displaystyle q \in \bigl[1, 3/2 \bigr)$ we can not infer directly any good convergence property for $(\nabla \theta_m)_{m \ge 1}$ since the unit ball of $ W^{1,q}_{ \Gamma_{1}\cup \Gamma_{L}}(\Omega)$ is not weakly compact when $q=1$. So we let $\displaystyle \zeta \in \left( 0, \frac{3-2q}{3-q} \right)$. Then $\displaystyle \rho= \frac{(\zeta +1) q}{2-q} \in (1, q_*)$ and $W^{1,q}_{ \Gamma_{1}\cup \Gamma_{L}}(\Omega)$ is compactly embedded into $L^{\rho}(\Omega)$. 
It follows that, possibly extracting a subsequence, still denoted $(\upsilon_m)_{m \ge 1}$ and $(\theta_m)_{m \ge 1}$, there exist $\upsilon \in  V_{0.div}^{p}$ and   $\theta \in L^{\rho}(\Omega)$ such that
\begin{eqnarray} \label{conv1}
\upsilon_m \to \upsilon \quad \hbox{\rm weakly in $V_{0.div}^{p}$ and strongly in ${\bf L}^p(\Omega)$} 
\end{eqnarray}
and
\begin{eqnarray} \label{conv2}
\theta_m \to \theta \quad \hbox{\rm 
strongly in $L^{\rho}(\Omega)$.}
\end{eqnarray}
Moreover, possibly extracting another subsequence, we have
\begin{eqnarray} \label{conv3}
\upsilon_m \to \upsilon, \quad \theta_m \to \theta \quad \hbox{\rm a.e. in $\Omega$.}
\end{eqnarray}
Let us prove now the strong convergence of $\bigl( D(\upsilon_m + G) \bigr)_{m \ge 1}$ in ${\bf L}^p(\Omega)$. 

\begin{proposition} \label{prop4bis}
Under the previous assumptions 
\begin{eqnarray} \label{conv4}
\upsilon_m \to \upsilon \quad \hbox{\rm strongly in $V_{0.div}^{p}$}
\end{eqnarray}
and $\upsilon$ satisfies the variational inequality
\begin{eqnarray} \label{limit-flow-pb}
\displaystyle
a(\theta ;\upsilon , \varphi-\upsilon)
+\Psi(\varphi)-\Psi(\upsilon) 
\geq \int_{\Omega} f\cdot (\varphi-\upsilon)\,dx 
\quad \forall \varphi\in V_{0.div}^{p}.
\end{eqnarray}
\end{proposition}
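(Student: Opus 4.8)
The plan is to upgrade the weak convergence (\ref{conv1}) to the strong convergence (\ref{conv4}) first, and only then pass to the limit in (\ref{flow_m_bis}); once the strong convergence is in hand the passage to the limit is routine. To obtain it I would test the inequality (\ref{flow_m_bis}) with $\varphi = \upsilon$, which gives
\[
\bigl\langle \mathbf{A}_{\theta_m, \upsilon_m}(\upsilon_m), \upsilon_m - \upsilon \bigr\rangle \le \Psi(\upsilon) - \Psi(\upsilon_m) - \int_{\Omega} f \cdot (\upsilon - \upsilon_m)\, dx.
\]
Since $\Psi$ is convex and continuous on $V_{0.div}^p$ it is weakly lower semi-continuous, so $\liminf_m \Psi(\upsilon_m) \ge \Psi(\upsilon)$, while $\int_{\Omega} f\cdot(\upsilon - \upsilon_m)\,dx \to 0$ by the strong ${\bf L}^p$ convergence in (\ref{conv1}). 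Hence $\limsup_m \bigl\langle \mathbf{A}_{\theta_m,\upsilon_m}(\upsilon_m), \upsilon_m - \upsilon\bigr\rangle \le 0$.

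Next I would isolate the pure $p$-Laplacian contribution. Writing $\mu = \tfrac{\mu_0}{2} + \overline{\mu}$ as in the uniqueness part of Theorem \ref{th1}, so that $\mathcal{F} = \mathcal{F}_0 + \overline{\mathcal{F}}$ with $\mathcal{F}_0$ and $\overline{\mathcal{F}}$ both monotone in the last variable, I set
\[
I_m = \int_{\Omega} \Bigl( |D(\upsilon_m + G)|^{p-2} D(\upsilon_m + G) - |D(\upsilon + G)|^{p-2} D(\upsilon + G) \Bigr) : D(\upsilon_m - \upsilon)\, dx \ge 0.
\]
Monotonicity of $\overline{\mathcal{F}}$ yields $\mu_0 I_m \le \bigl\langle \mathbf{A}_{\theta_m,\upsilon_m}(\upsilon_m) - \mathbf{A}_{\theta_m,\upsilon_m}(\upsilon), \upsilon_m - \upsilon\bigr\rangle$. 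The term $\langle \mathbf{A}_{\theta_m,\upsilon_m}(\upsilon_m), \upsilon_m - \upsilon\rangle$ has $\limsup \le 0$ by the previous step, while $\langle \mathbf{A}_{\theta_m,\upsilon_m}(\upsilon), \upsilon_m - \upsilon\rangle = \int_{\Omega} \mathcal{F}(\theta_m, \upsilon_m + G, D(\upsilon + G)) : D(\upsilon_m - \upsilon)\, dx$ has the third argument \emph{frozen} at $D(\upsilon+G)$; using the a.e.\ convergence (\ref{conv3}), the continuity of $\mathcal{F}$ in its first two arguments and the fixed majorant $2\mu_1 |D(\upsilon + G)|^{p-1} \in L^{p'}(\Omega)$, Lebesgue's theorem gives strong convergence of this factor in $(L^{p'}(\Omega))^{3\times3}$, which paired with the weak ${\bf L}^p$ convergence of $D(\upsilon_m - \upsilon)$ to $0$ forces the term to $0$. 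Therefore $I_m \to 0$.

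The strong convergence (\ref{conv4}) then follows exactly as in Theorem \ref{th1}: for $p \ge 2$ inequality (\ref{ineg_case1}) gives $\tfrac{1}{2^{p-1}}\|D(\upsilon_m - \upsilon)\|^p_{(L^p(\Omega))^{3\times3}} \le I_m \to 0$, whereas for $1 < p < 2$ the Hölder argument built on (\ref{inegp}) yields $C_p \|D(\upsilon_m - \upsilon)\|^p_{(L^p(\Omega))^{3\times3}} \le I_m^{p/2}\bigl(\|D(\upsilon_m+G)\|^p_{(L^p(\Omega))^{3\times3}} + \|D(\upsilon+G)\|^p_{(L^p(\Omega))^{3\times3}}\bigr)^{(2-p)/2}$, whose right-hand side tends to $0$ because the second factor stays bounded by Proposition \ref{prop1}. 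Korn's inequality then gives $\upsilon_m \to \upsilon$ in $V_{0.div}^p$.

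Finally I would pass to the limit in (\ref{flow_m_bis}) for fixed $\varphi \in V_{0.div}^p$. With the strong convergence just obtained, $D(\upsilon_m + G) \to D(\upsilon + G)$ in $(L^p(\Omega))^{3\times3}$ and (along a subsequence) a.e., so using the growth bound (\ref{mlo}), the a.e.\ convergence of $\mathcal{F}(\theta_m, \upsilon_m + G, D(\upsilon_m + G))$ and the $L^1$-convergent majorant $(2\mu_1)^{p'}|D(\upsilon_m + G)|^p$, a generalized (Vitali) dominated convergence argument gives $\mathcal{F}(\theta_m, \upsilon_m + G, D(\upsilon_m + G)) \to \mathcal{F}(\theta, \upsilon + G, D(\upsilon + G))$ strongly in $(L^{p'}(\Omega))^{3\times3}$; hence $a(\theta_m; \upsilon_m, \varphi - \upsilon_m) \to a(\theta; \upsilon, \varphi - \upsilon)$. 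The continuity of $\Psi$ under strong convergence and the strong ${\bf L}^p$ convergence on the right-hand side then deliver (\ref{limit-flow-pb}). I expect the main obstacle to be the strong convergence step, and within it the case $1 < p < 2$, where monotonicity alone is too weak and one must combine the degenerate estimate (\ref{inegp}) with Hölder's inequality and the uniform bound of Proposition \ref{prop1}.
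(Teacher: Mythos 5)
Your proof is correct, and it takes a genuinely different route from the paper's. The paper never forms the Minty-type bound $\limsup_m\bigl\langle \mathbf{A}_{\theta_m,\upsilon_m}(\upsilon_m),\upsilon_m-\upsilon\bigr\rangle\le 0$. Instead it introduces the auxiliary problem (\ref{flow_aux_bis}) in which the limit pair $(\theta,\upsilon)$ is frozen as data, denotes by $\tilde\upsilon$ its unique solution (this is exactly where the uniqueness statement of Theorem \ref{th1} is used), and cross-tests the two variational inequalities, taking $\varphi=\upsilon_m$ in (\ref{flow_aux_bis}) and $\varphi=\tilde\upsilon$ in (\ref{flow_m_bis}); in this combination the $\Psi$-terms and the $f$-terms cancel exactly, leaving $\bigl\langle\mathbf{A}_{\theta_m,\upsilon_m}(\upsilon_m)-\mathbf{A}_{\theta,\upsilon}(\tilde\upsilon),\upsilon_m-\tilde\upsilon\bigr\rangle\le0$. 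From there the paper runs the same machinery you use (the splitting $\mathcal F=\mathcal F_0+\overline{\mathcal F}$ from Theorem \ref{th1}, monotonicity of $\overline{\mathcal F}$, Lebesgue's theorem on the factor with frozen third argument, then (\ref{ineg_case1}) for $p\ge2$, respectively (\ref{inegp}) plus H\"older and the uniform bound of Proposition \ref{prop1} for $1<p<2$, and Korn's inequality) to conclude $\upsilon_m\to\tilde\upsilon$ strongly; it then identifies $\tilde\upsilon=\upsilon$ from the weak convergence (\ref{conv1}), and the limit inequality (\ref{limit-flow-pb}) requires no passage to the limit whatsoever, since it is literally (\ref{flow_aux_bis}) written with $\tilde\upsilon=\upsilon$. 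What your route buys: you never need the auxiliary problem nor its unique solvability, and the argument is the classical monotonicity scheme applied directly to the sequence; the price is two extra limit passages that the paper avoids, namely the weak lower semicontinuity of $\Psi$ at the start (fine, since $\Psi$ is convex and strongly continuous) and the term-by-term passage to the limit in (\ref{flow_m_bis}) at the end, for which your Vitali/generalized-dominated-convergence argument for $a(\theta_m;\upsilon_m,\varphi-\upsilon_m)$, paired with the continuity of $\Psi$ along the strong convergence, is indeed needed and correct. The paper's route buys cleaner bookkeeping ($\Psi$ and $f$ disappear from the estimate at once, and the limit inequality is automatic), at the cost of invoking the uniqueness of Theorem \ref{th1} — which is precisely the reason the paper works with the doubly-frozen operator $\mathbf{A}_{\theta,\upsilon}$ rather than $\mathbf{A}_{\theta}$ (see the Remark following Proposition \ref{prop1}).
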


\begin{proof}
Let $\tilde \upsilon \in V_{0.div}^{p}$ be the unique solution of 
 the following auxiliary problem
\begin{eqnarray} \label{flow_aux_bis}
\bigl\langle \mathbf{A}_{\theta, \upsilon} (\tilde \upsilon) , {\varphi} - \tilde \upsilon \bigr\rangle
+\Psi(\varphi)-\Psi(\tilde \upsilon) 
\geq \int_{\Omega} f\cdot (\varphi- \tilde \upsilon)\,dx 
\quad \forall \varphi\in V_{0.div}^{p} .
\end{eqnarray}


Now we let $\varphi = \upsilon_m $ in (\ref{flow_aux_bis}) and $\varphi = \tilde \upsilon$ in (\ref{flow_m_bis}). We obtain
\begin{eqnarray*} 
\begin{array}{ll}
\displaystyle 0 \ge \bigl\langle \mathbf{A}_{\theta_{m}, \upsilon_m} (\upsilon_m) - \mathbf{A}_{\theta, \upsilon} (\tilde \upsilon),  \upsilon_m - \tilde \upsilon  \bigr\rangle \\
\displaystyle
= \mu_0 \int_{\Omega} \bigl( \bigl|D(  \upsilon_m +G) \bigr|^{p-2} D( \upsilon_m+ G) 
- \bigl|D( \tilde \upsilon +G) \bigr|^{p-2} D(\tilde \upsilon+ G) \Bigr) : D(  \upsilon_m - \tilde \upsilon) \, dx \\
\displaystyle 
+ \int_{\Omega} \Bigl( {\overline {\mathcal F}} \bigl( \theta_{m}, \upsilon_m + G, D( \upsilon_m  +G) \bigr)
- {\overline {\mathcal F}} \bigl( \theta, \upsilon +G, D(\tilde \upsilon  +G) \bigr) \Bigr)  : D(  \upsilon_m - \tilde \upsilon) \, dx \\
\displaystyle 
=  \mu_0 \int_{\Omega} \bigl( \bigl|D(  \upsilon_m +G) \bigr|^{p-2} D( \upsilon_m+ G) 
- \bigl|D( \tilde \upsilon +G) \bigr|^{p-2} D(\tilde \upsilon+ G) \Bigr) : D(  \upsilon_m - \tilde \upsilon) \, dx \\
\displaystyle 
\displaystyle 
+ \int_{\Omega} \Bigl( {\overline {\mathcal F}} \bigl( \theta_{m}, \upsilon_m +G, D( \upsilon_m  +G) \bigr)
- {\overline {\mathcal F}} \bigl( \theta_{m}, \upsilon_m +G, D(\tilde \upsilon  +G) \bigr) \Bigr)  : D(  \upsilon_m - \tilde \upsilon) \, dx \\
\displaystyle
+ \int_{\Omega} \Bigl( {\overline {\mathcal F}} \bigl( \theta_{m}, \upsilon_m +G, D( \tilde \upsilon  +G) \bigr)
- {\overline {\mathcal F}} \bigl( \theta, \upsilon +G, D(\tilde \upsilon  +G) \bigr) \Bigr)  : D(  \upsilon_m - \tilde \upsilon) \, dx
\end{array}
\end{eqnarray*}
and thus
\begin{eqnarray*} 
\begin{array}{ll}
\displaystyle  \mu_0 \int_{\Omega} \bigl( \bigl|D(  \upsilon_m +G) \bigr|^{p-2} D( \upsilon_m+ G) 
- \bigl|D( \tilde \upsilon +G) \bigr|^{p-2} D(\tilde \upsilon+ G) \Bigr) : D(  \upsilon_m - \tilde \upsilon) \, dx \\
\displaystyle 
\le \int_{\Omega} \Bigl( {\overline {\mathcal F}} \bigl( \theta, \upsilon +G, D( \tilde \upsilon  +G) \bigr)
- {\overline {\mathcal F}} \bigl( \theta_{m}, \upsilon_m +G , D( \tilde \upsilon  +G) \bigr) \Bigr)  : D(  \upsilon_m - \tilde \upsilon) \, dx \\
\displaystyle 
\le  \Bigl\| {\overline {\mathcal F}} \bigl( \theta, \upsilon, D( \tilde \upsilon  +G) \bigr)
- {\overline {\mathcal F}} \bigl( \theta_{m}, \upsilon_m, D(\tilde  \upsilon  +G) \bigr) \Bigr\|_{ (L^{p'}(\Omega))^{3 \times 3}} 
 \Vert D(  \upsilon_m - \tilde \upsilon) \Vert_{( L^p(\Omega))^{3 \times 3}}.
\end{array}
\end{eqnarray*}
Let us distinguish once again two cases.

\smallskip

{\bf Case 1:} $p \ge 2$.

\smallskip

With the same computations as in Proposition \ref{prop4} we get
\begin{eqnarray*}
\begin{array}{ll}
\displaystyle \mu_0 \frac{1}{2^{p-1}}  \Vert  D(  \upsilon_m - \tilde \upsilon) \Vert^p_{(L^p( \Omega))^{3 \times 3}} 
\\
\displaystyle 
\le 
 \mu_0 \int_{\Omega} \bigl( \bigl|D(  \upsilon_m +G) \bigr|^{p-2} D( \upsilon_m+ G) 
- \bigl|D( \tilde \upsilon +G) \bigr|^{p-2} D(\tilde \upsilon+ G) \Bigr) : D(  \upsilon_m - \tilde \upsilon) \, dx \\
\displaystyle 
\le  \Bigl\| {\overline {\mathcal F}} \bigl( \theta, \upsilon, D( \tilde \upsilon  +G) \bigr)
- {\overline {\mathcal F}} \bigl( \theta_{m}, \upsilon_m, D(\tilde \upsilon  +G) \bigr) \Bigr\|_{( L^{p'}(\Omega))^{3 \times 3} } 
 \Vert D(  \upsilon_m - \tilde \upsilon) \Vert_{( L^p(\Omega))^{3 \times 3}}
\end{array}
\end{eqnarray*}
and thus
\begin{eqnarray*}
\displaystyle \mu_0  \frac{1}{2^{p-1} }  \Vert  D(  \upsilon_m - \tilde \upsilon) \Vert^{p-1}_{(L^p( \Omega))^{3 \times 3}} 
\displaystyle 
\le \Bigl\| {\overline {\mathcal F}} \bigl( \theta, \upsilon, D( \tilde \upsilon  +G) \bigr)
- {\overline {\mathcal F}} \bigl( \theta_{m}, \upsilon_m, D( \tilde \upsilon  +G) \bigr) \Bigr\|_{( L^{p'}(\Omega))^{3 \times 3}} .
\end{eqnarray*}
By using the continuity and uniform boundedness of the mapping ${\overline \mu}$, the convergence (\ref{conv3}) and Lebesgue's theorem we obtain that 
\begin{eqnarray*}
{\overline {\mathcal F}} \bigl( \theta_{m}, \upsilon_m + G, D( \tilde \upsilon  +G) \bigr) \Bigr) 
\to {\overline {\mathcal F}} \bigl( \theta, \upsilon +G , D( \tilde \upsilon  +G) \bigr) \Bigr) \quad \hbox{\rm strongly in $\bigl(L^{p'}(\Omega) \bigr)^{3 \times 3}$}.
\end{eqnarray*}
Thus 
\begin{eqnarray*}
\upsilon_m \to \tilde \upsilon \quad \hbox{\rm strongly in $V_{0.div}^p$}
\end{eqnarray*}
and with (\ref{conv1}) we infer that $\upsilon = \tilde \upsilon$ which yields the announced result.

\smallskip

\noindent {\bf Case 2:} $1<p<2$.

\smallskip

Once again with the same computations as in Proposition \ref{prop4}
\begin{eqnarray*}
\begin{array}{ll}
\displaystyle \left( \frac{ C_p}{( 2^p ( C_{flow}^p + \Vert D(G) \Vert^p_{ (L^p(\Omega))^{3 \times 3} }  )^{\frac{2-p}{2}}} \right)^{\frac{2}{p}}   \Vert D(  \upsilon_m - \tilde \upsilon) \Vert_{ (L^p(\Omega))^{3 \times 3} }
 \\
\displaystyle 
\le \frac{1}{\mu_0} \Bigl\| {\overline {\mathcal F}} \bigl( \theta, \upsilon, D( \tilde \upsilon  +G) \bigr)
- {\overline {\mathcal F}} \bigl( \theta_{m}, \upsilon_m, D( \tilde \upsilon  +G) \bigr) \Bigr\|_{( L^{p'}(\Omega))^{3 \times 3}}.
\end{array}
\end{eqnarray*}
and we may conclude as in the previous case.

\end{proof}

We may  construct now  a pressure $\pi \in L^{p'}_0(\Omega)$ satisfying
\begin{eqnarray} \label{pressure}
\displaystyle
a(\theta;\upsilon , \varphi-\upsilon) - \int_{\Omega} \pi {\rm div}(\varphi) \, dx 
+\Psi(\varphi)-\Psi(\upsilon) 
\geq \int_{\Omega} f\cdot (\varphi-\upsilon )\,dx 
\quad \forall \varphi\in V_{0}^{p}.
\end{eqnarray}

As usual the main tool is De Rham's theorem.
 
 \begin{proposition} \label{prop5}
 Let $q \in \bigl[1, 3/2 \bigr)$ and $p>3$ if $q=1$, $\displaystyle p \ge \frac{3q}{4q-3}$ otherwise and assume that (\ref{eqG})-(\ref{eqfk}), (\ref{rop})-(\ref{m5})-(\ref{mlo}) and (\ref{TEM2})-(\ref{TEM2bis})-(\ref{Cr})-(\ref{thetab}) hold.
 Let $(\upsilon, \theta)$ be obtained as previously. Then there exists a unique $\pi \in L^{p'}_0(\Omega)$ satisfying (\ref{pressure}).
 \end{proposition}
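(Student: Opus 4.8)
The plan is to recover the pressure by a de Rham argument, handling the friction functional $\Psi$ through the fact that it depends on the test field only via its trace on $\Gamma_0$. First I would introduce the linear form $\mathcal{L}$ on ${\bf W}^{1,p}_0(\Omega)$ (the closure of ${\bf D}(\Omega)$ in ${\bf W}^{1,p}(\Omega)$, i.e. the fields vanishing on all of $\partial\Omega$) defined by
\[
\langle \mathcal{L}, \phi \rangle = a(\theta;\upsilon, \phi) - \int_{\Omega} f\cdot \phi\,dx .
\]
Since $|\mathcal{F}(\theta,\upsilon+G, D(\upsilon+G))| \le 2\mu_1 |D(\upsilon+G)|^{p-1} \in L^{p'}(\Omega)$ and $f \in {\bf L}^{p'}(\Omega)$, this form is continuous, so $\mathcal{L} \in {\bf W}^{-1,p'}(\Omega)$. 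The crucial observation is that for any divergence-free $\phi \in {\bf W}^{1,p}_0(\Omega)$ both $\upsilon + \phi$ and $\upsilon - \phi$ belong to $V_{0.div}^p$ and, since $\phi = 0$ on $\Gamma_0$, satisfy $\Psi(\upsilon \pm \phi) = \Psi(\upsilon)$. Inserting them successively as test functions in (\ref{limit-flow-pb}) and using the linearity of $\varphi \mapsto a(\theta;\upsilon,\varphi)$ yields $\langle\mathcal{L}, \phi\rangle \le 0$ and $\langle\mathcal{L}, \phi\rangle \ge 0$, whence $\langle\mathcal{L},\phi\rangle = 0$ on the whole divergence-free subspace of ${\bf W}^{1,p}_0(\Omega)$.

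Next I would invoke the $L^p$ version of de Rham's theorem: a functional in ${\bf W}^{-1,p'}(\Omega)$ annihilating the divergence-free fields of ${\bf W}^{1,p}_0(\Omega)$ is the gradient of some $\pi \in L^{p'}(\Omega)$, determined up to an additive constant; normalizing to zero mean gives a unique $\pi \in L^{p'}_0(\Omega)$ with
\[
a(\theta;\upsilon, \phi) - \int_{\Omega} f\cdot \phi\,dx = \int_{\Omega} \pi\,{\rm div}(\phi)\,dx \qquad \forall \phi \in {\bf W}^{1,p}_0(\Omega),
\]
where the sign has been fixed by replacing $\pi$ with $-\pi$ if necessary. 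This is the point where the Lipschitz regularity of $\Omega$ enters, through Ne\v{c}as' inequality and the closed-range property of the gradient in the $L^{p'}$ scale.

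It then remains to upgrade this interior identity to the inequality (\ref{pressure}) posed over all of $V_0^p$, which carries a non-trivial tangential trace on $\Gamma_0$. Given $\varphi \in V_0^p$, I would note that ${\rm div}(\varphi)$ has zero mean, because $\int_{\partial\Omega}\varphi\cdot n\,dY = 0$ thanks to $\varphi = 0$ on $\Gamma_1\cup\Gamma_L$ and $\varphi\cdot n = 0$ on $\Gamma_0$; hence the Bogovskii operator provides $\phi_0 \in {\bf W}^{1,p}_0(\Omega)$ with ${\rm div}(\phi_0) = {\rm div}(\varphi)$. Then $\varphi - \phi_0 \in V_{0.div}^p$ has the same trace as $\varphi$ on $\Gamma_0$, so $\Psi(\varphi - \phi_0) = \Psi(\varphi)$; inserting $\varphi - \phi_0$ into (\ref{limit-flow-pb}), expanding by linearity, and substituting the de Rham identity applied to $\phi_0$ (using ${\rm div}(\phi_0) = {\rm div}(\varphi)$ and ${\rm div}(\upsilon)=0$) produces precisely (\ref{pressure}). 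Uniqueness follows by testing (\ref{pressure}) for two candidate pressures with $\varphi = \upsilon \pm \phi$, $\phi \in {\bf W}^{1,p}_0(\Omega)$: both must then satisfy the de Rham identity, so their difference pairs to zero against every ${\rm div}(\phi)$, and the surjectivity of ${\rm div}: {\bf W}^{1,p}_0(\Omega) \to L^p_0(\Omega)$ together with the zero-mean normalization forces it to vanish.

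The main obstacle is the de Rham / Ne\v{c}as representation step: passing from the property that $\mathcal{L}$ annihilates the divergence-free subspace to the existence of a genuine function $\pi \in L^{p'}(\Omega)$ with $\mathcal{L} = \nabla\pi$ requires the $L^p$-theory of the divergence equation on the merely Lipschitz domain $\Omega$. The accompanying difficulty, specific to the slip/friction setting, is the bookkeeping of the $\Gamma_0$-trace in passing from $V_0^p$ to $V_{0.div}^p$; this is defused by the fact that $\Psi$ depends on the test field only through its trace on $\Gamma_0$ and that the Bogovskii corrector $\phi_0$ vanishes there, so the friction term is left untouched by the reduction to divergence-free test functions.
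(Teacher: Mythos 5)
Your proposal is correct, and its first half coincides with the paper's own argument: both test (\ref{limit-flow-pb}) with $\varphi = \upsilon \pm \phi$ for $\phi$ divergence-free and vanishing on $\partial\Omega$ (so that $\Psi$ is unaffected), and both then invoke the $L^p$ de Rham theorem of Amrouche--Girault to produce $\pi \in L^{p'}_0(\Omega)$. Where you genuinely diverge is in the passage from divergence-free test functions back to all of $V_0^p$. The paper takes a ``stress tensor'' route: it defines $\sigma = 2\mu(\cdot)\,|D(\upsilon+G)|^{p-2}D(\upsilon+G) - \pi\,{\rm Id}$, deduces $-{\rm div}(\sigma)=f$ in ${\bf L}^{p'}(\Omega)$, applies Green's formula to exhibit the boundary term $\int_{\Gamma_0}\sigma_\tau\cdot(\varphi-\upsilon)\,dx'$, and then shows this term plus $\Psi(\varphi)-\Psi(\upsilon)$ is nonnegative by lifting the trace of $\varphi$ to a divergence-free field $\hat\varphi \in V^p_{0.div}$ (Corollary 3.8 of Amrouche--Girault) and using (\ref{limit-flow-pb}) for $\hat\varphi$. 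You instead correct $\varphi$ by a Bogovskii field $\phi_0 \in {\bf W}^{1,p}_0(\Omega)$ with ${\rm div}(\phi_0)={\rm div}(\varphi)$, insert $\varphi - \phi_0 \in V^p_{0.div}$ directly into (\ref{limit-flow-pb}), and substitute the de Rham identity for $\phi_0$; since $\phi_0$ vanishes on $\Gamma_0$, the functional $\Psi$ is untouched. The two auxiliary decompositions are equivalent ($\phi_0$ and $\varphi - \hat\varphi$ are interchangeable), but your route is more economical: it never needs to give meaning to the trace $\sigma\cdot n$ of a tensor field that is merely in ${\bf L}^{p'}$ with divergence in ${\bf L}^{p'}$ (a genuinely delicate point, since that trace lives only in a negative-order Sobolev space on $\partial\Omega$, so the paper's boundary integrals require interpretation), whereas the paper's route has the merit of actually constructing the physical stress and identifying the tangential traction $\sigma_\tau$ on $\Gamma_0$, which connects the weak formulation back to the Tresca law (\ref{tr}). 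Your uniqueness argument, via the surjectivity of ${\rm div}: {\bf W}^{1,p}_0(\Omega) \to L^p_0(\Omega)$ and the zero-mean normalization, is also sound; the paper gets uniqueness directly from the de Rham statement.
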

 
\begin{proof}
We introduce the linear form $\mathcal{L}$  defined on $V_{0 }^p$ by 
\begin{eqnarray}\label{brux}
\mathcal{L}(\psi)=\int_{\Omega} f \cdot \psi \, dx  -a(\theta;\upsilon,\psi) \quad \forall \psi \in V_{0 }^p.
\end{eqnarray}
Clearly $\mathcal{L}$ is continuous on $V_{0 }^p$. 
Let  
\begin{eqnarray*}
\begin{array}{ll} 
 \displaystyle {\bf W}^{1,p}_{0}(\Omega) = \bigl\{ w \in {\bf W}^{1,p}(\Omega): \ w=0 \ {\rm on} \ \partial \Omega \bigr\}, \quad \\
 \displaystyle  {\bf W}^{1,p}_{0.div}(\Omega) = \bigl\{ w \in {\bf W}^{1,p}_{0}(\Omega): \ {\rm div} (w) = 0 \ {\rm in } \ \Omega \bigr\}.
 \end{array}
 \end{eqnarray*}
  By choosing $\varphi = \upsilon \pm \psi $ with $\psi \in {\bf W}^{1,p}_{0.div}(\Omega)$ as a test-function in (\ref{limit-flow-pb}) we get $\mathcal{L}(\psi) =0$ for all $\psi \in {\bf W}^{1,p}_{0.div}(\Omega)$.  With De Rham's theorem (see for instance Lemma   2.7 
in  \cite{Amrouche-Girault}) we infer that there 
exists  a unique   $\pi\in L^{p'}_0(\Omega)$ such that
\begin{eqnarray*}\label{halak}
\mathcal{L}(\psi)= \int_{\Omega} f \cdot \psi \, dx  -a(\theta;\upsilon,\psi) = \langle \nabla \pi, \psi \rangle_{\bf{\mathcal D}'(\Omega),\bf{\mathcal D}(\Omega)} 
\quad\forall\psi \in\bf{\mathcal D}(\Omega).
\end{eqnarray*}
On the other hand, with Green's formula
\begin{eqnarray*}
\begin{array}{ll}
\displaystyle  a(\theta;\upsilon,\psi) 
= \int _{\Omega}
2\mu \bigl(\theta,\upsilon+G,
|D(\upsilon+G)| \bigr) \bigl|D(\upsilon+G) \bigr|^{p-2} D(\upsilon+G):D(\psi) \, dx \\
 = - 
\Bigl\langle {\rm div} \bigl( 2\mu \bigl(\theta,\upsilon+G,|D(\upsilon+G)| \bigr) 
\bigl|D(\upsilon+G) \bigr|^{p-2} D( \upsilon+G) \bigl), 
\psi \Bigr\rangle_{\bf{\mathcal D}'(\Omega),\bf{\mathcal D}(\Omega)}
\end{array}
\end{eqnarray*}
and thus 
\begin{eqnarray} \label{pression1}
\begin{array}{ll}
\displaystyle 
- \Bigl\langle {\rm div} \bigl( 2\mu \bigl(\theta,\upsilon+G,|D(\upsilon+G)| \bigr) 
\bigl|D(\upsilon+G) \bigr|^{p-2} D( \upsilon+G) \bigl), 
\psi \Bigr\rangle_{\bf{\mathcal D}'(\Omega),\bf{\mathcal D}(\Omega)} 
\\
\displaystyle
+ \langle \nabla\pi,\psi \rangle_{\bf{\mathcal D}'(\Omega),\bf{\mathcal D}(\Omega)} 
= \int_{\Omega} f \cdot \psi \, dx
\end{array}
\end{eqnarray}
for all  $\psi \in\bf{\mathcal D}(\Omega)$. We define the stress tensor as (see (\ref{sigma}))
\begin{eqnarray} \label{pression2}
\sigma = 2 \mu \bigl( \theta, \upsilon +G, \bigl| D(\upsilon +G) \bigr| \bigr) \bigl| D(\upsilon +G) \bigr|^{p-2} D(\upsilon +G) - \pi {\rm Id}_{{\mathbb R}^3} .
\end{eqnarray}
With (\ref{pression1}) we have
\begin{eqnarray*}\label{ss"}
 -\bigl\langle {\rm div} ( \sigma), \psi \bigr\rangle_{\bf{\mathcal D}'(\Omega),\bf{\mathcal D}(\Omega)} 
=  \int_{\Omega} f \cdot \psi \, dx
\quad\forall\psi \in\bf{\mathcal D}(\Omega).   
\end{eqnarray*}
Recalling that $f \in {\bf L}^{p'}(\Omega)$ we obtain that ${\rm div} (\sigma) \in {\bf L}^{p'}(\Omega)$ and 
\begin{eqnarray*}
- {\rm div} (\sigma)  = f \quad \hbox{\rm in ${\bf L}^{p'}(\Omega)$.}
\end{eqnarray*}
With (\ref{pression2}) we have also $\sigma \in {\bf L}^{p'} (\Omega)$ and with Green's formula we get
\begin{eqnarray*}
\int_{\Omega} \sigma : \nabla w \, dx - \int_{\partial \Omega} \sum_{i,j =1}^3 \sigma_{ij} n_j w_i \, dY = \int_{\Omega} f \cdot w \, dx \quad \forall w \in {\bf W}^{1,p}(\Omega).
\end{eqnarray*}
where we recall that $n$ denotes the unit outward normal vector to $\partial \Omega$. We choose $w = \varphi - \upsilon$ with $\varphi \in V_0^p$. We get 
\begin{eqnarray*}
\int_{\Omega} \sigma : \nabla (\varphi - \upsilon)  \, dx - \int_{\Gamma_0} \sum_{i,j =1}^3 \sigma_{ij} n_j (\varphi - \upsilon)_i \, dx' = \int_{\Omega} f \cdot (\varphi - \upsilon) \, dx  \quad \forall \varphi \in V_0^p.
\end{eqnarray*}
We observe that the vector   $\sigma \cdot n = \left( \sum_{j=1}^3 \sigma_{ij}n_{j} \right)_{1 \le i \le 3}$ can be decomposed as $\sigma_{\tau}+\sigma_{n} n$. 
Since $(\varphi - \upsilon) \cdot n = 0$ on $\Gamma_0$ we obtain
\begin{eqnarray*}
\int_{\Omega} \sigma : \nabla (\varphi - \upsilon)  \, dx - \int_{\Gamma_0}  \sigma_{\tau} \cdot (\varphi - \upsilon) \, dx' = \int_{\Omega} f \cdot (\varphi - \upsilon) \, dx  \quad \forall \varphi \in V_0^p
\end{eqnarray*}
and by replacing $\sigma$ by its expression (see (\ref{pression2})) we get
\begin{eqnarray*}\label{eq329} 
a(\theta; {\upsilon},\varphi - {\upsilon} 
)- \int_{\Omega} \pi {\rm div} (\varphi ) \, dx -
\int_{\Omega} f \cdot (\varphi - \upsilon) \, dx  
=\int_{\Gamma_{0}} 
\sigma_{\tau} \cdot (\varphi - \upsilon )\, dx'
\end{eqnarray*}
i.e.
\begin{eqnarray*}
\begin{array}{ll}
\displaystyle 
a(\theta; {\upsilon},\varphi - {\upsilon} 
)- \int_{\Omega} \pi {\rm div} (\varphi ) \, dx -
\int_{\Omega} f \cdot (\varphi - \upsilon) \, dx  
+ \Psi( \varphi) - \Psi(\upsilon) 
\\
\displaystyle 
=\int_{\Gamma_{0}} 
\sigma_{\tau} \cdot (\varphi - \upsilon )\, dx' + \Psi( \varphi) - \Psi(\upsilon)  \quad \forall \varphi \in V_0^p.
\end{array}
\end{eqnarray*}
On the other hand, we have
\[\int_{\partial\Omega} \varphi\cdot n\, dY=0 \quad\forall 
 \varphi\in V_{0}^p \]
so
there exists  $\hat  \varphi\in \bf{W}^{1,p}(\Omega)$ 
such that 
 ${\rm div} (\hat\varphi)=0 $ in $ \Omega$ and $ \hat \varphi=  
\varphi$ on $\partial\Omega$ (see for instance Corollary 3.8 in  \cite{Amrouche-Girault}). Hence $\hat \varphi \in V_{0.div}^p$ and with (\ref{limit-flow-pb})
\begin{eqnarray*}
\begin{array}{ll}
\displaystyle 
a(\theta; {\upsilon},\hat \varphi - {\upsilon} 
)- \underbrace{\int_{\Omega} \pi {\rm div} (\hat \varphi ) \, dx}_{=0} -
\int_{\Omega} f \cdot (\hat \varphi - \upsilon) \, dx  
+ \Psi(\hat \varphi) - \Psi(\upsilon) 
\\
\displaystyle 
=\int_{\Gamma_{0}} 
\sigma_{\tau} \cdot (\hat \varphi - \upsilon )\, dx' + \Psi(\hat \varphi) - \Psi(\upsilon) \ge 0.
\end{array}
\end{eqnarray*}
But 
\begin{eqnarray*}
\int_{\Gamma_{0}} 
\sigma_{\tau} \cdot (\varphi - \upsilon )\, dx' + \Psi( \varphi) - \Psi(\upsilon)  
= \int_{\Gamma_{0}} 
\sigma_{\tau} \cdot (\hat \varphi - \upsilon )\, dx' + \Psi(\hat \varphi) - \Psi(\upsilon) 
\end{eqnarray*}
since $ \hat \varphi=  \varphi$ on $\partial\Omega$. Thus
\begin{eqnarray*}
\begin{array}{ll}
\displaystyle 
a(\theta; {\upsilon},\varphi - {\upsilon} 
)- \int_{\Omega} \pi {\rm div} (\varphi ) \, dx -
\int_{\Omega} f \cdot (\varphi - \upsilon) \, dx  
+ \Psi( \varphi) - \Psi(\upsilon) 
\\
\displaystyle
=\int_{\Gamma_{0}} 
\sigma_{\tau} \cdot (\varphi - \upsilon )\, dx' + \Psi( \varphi) - \Psi(\upsilon) \ge 0 \quad \forall \varphi \in V_0^p
\end{array}
\end{eqnarray*}
and the variational inequality (\ref{pressure}) is satisfied.
\end{proof}

Finally 
we can state the following existence result 

\begin{theorem} \label{main_theorem}
 Let $p>3/2$ and $\displaystyle q \in \bigl[1, 3/2 \bigr)$ satisfying the compatibility condition (\ref{compa1}) i.e.  $p>3$ if $q=1$, $\displaystyle p \ge \frac{3q}{4q-3}$ otherwise and assume that (\ref{eqG})-(\ref{eqfk}), (\ref{rop})-(\ref{m5})-(\ref{mlo}) and (\ref{TEM2})-(\ref{TEM2bis})-(\ref{Cr})-(\ref{thetab}) hold. Then the coupled problem (P) admits a solution.
 \end{theorem}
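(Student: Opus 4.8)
The plan is to collect the ingredients already established and to pass to the limit as $m \to + \infty$ in the heat equation (\ref{heat_m_bis}). From Proposition \ref{prop4bis} we already dispose of $\upsilon \in V_{0.div}^p$ solving the flow variational inequality (\ref{limit-flow-pb}), together with the strong convergence (\ref{conv4}) of $(\upsilon_m)_{m \ge 1}$ in $V_{0.div}^p$, and from Proposition \ref{prop5} of a pressure $\pi \in L^{p'}_0(\Omega)$ satisfying (\ref{pressure}); this pair takes care of the first line of Problem (P). It therefore remains only to prove that the limit temperature $\theta$ belongs to $W^{1,q}_{\Gamma_1 \cup \Gamma_L}(\Omega)$ and satisfies the weak heat equation of Problem (P) for every $w \in C^1(\overline\Omega)$ with $w = 0$ on $\Gamma_1 \cup \Gamma_L$. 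Since Proposition \ref{prop3} provides a uniform bound of $(\theta_m)_{m \ge 1}$ in $W^{1,\bar q}_{\Gamma_1 \cup \Gamma_L}(\Omega)$ for every $\bar q \in [1,3/2)$, and since the case $q=1$ (with $p>3$) follows a fortiori by choosing a slightly larger exponent $\bar q \in (1,3/2)$ still compatible with $p$, I may assume $q>1$. Then the unit ball of $W^{1,q}_{\Gamma_1 \cup \Gamma_L}(\Omega)$ is weakly compact, so up to a further subsequence $\nabla \theta_m \rightharpoonup \nabla \theta$ weakly in $L^q(\Omega)$ and $\theta \in W^{1,q}_{\Gamma_1 \cup \Gamma_L}(\Omega)$.

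I would then treat the five terms of (\ref{heat_m_bis}) separately. For the diffusion term, the weak convergence $\nabla \theta_m \rightharpoonup \nabla \theta$ in $L^q(\Omega)$ paired against $K^{T} \nabla w \in L^{q'}(\Omega)$ gives $\int_\Omega (K \nabla \theta_m)\cdot \nabla w \, dx \to \int_\Omega (K \nabla \theta)\cdot \nabla w \, dx$. For the reaction term I use (\ref{Cr}), the a.e.\ convergence (\ref{conv3}) and Lebesgue's theorem. For the boundary term I use $\theta^b_{1/m} \to \theta^b$ in $L^1(\Gamma_0)$ (dominated convergence, since $|\theta^b_{1/m}| \le |\theta^b|$) against the bounded trace $\gamma(w)$. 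The source term is the first delicate point: writing $g := 2\mu(\theta, \upsilon+G, |D(\upsilon+G)|)|D(\upsilon+G)|^p$ and using the strong convergence (\ref{conv4}) I have $D(\upsilon_m+G) \to D(\upsilon+G)$ strongly in $L^p(\Omega)$ and, up to a subsequence, a.e.; combined with (\ref{conv3}) and the continuity of $\mu$ this yields $g_{1/m}(\theta_m,\upsilon_m) \to g$ a.e. Since $0 \le g_{1/m}(\theta_m, \upsilon_m) \le 2\mu_1 |D(\upsilon_m+G)|^p =: h_m$ with $h_m \to 2\mu_1|D(\upsilon+G)|^p$ strongly in $L^1(\Omega)$, the generalized dominated convergence theorem (with a dominating sequence converging in $L^1$) gives $g_{1/m}(\theta_m,\upsilon_m) \to g$ strongly in $L^1(\Omega)$, hence convergence of $\int_\Omega g_{1/m}(\theta_m,\upsilon_m) w \, dx$.

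The genuine difficulty is the convection term, and this is where the compatibility condition (\ref{compa1})--(\ref{compa2}) enters decisively. Using ${\rm div}(\upsilon_m+G)=0$, the boundary conditions $(\upsilon_m+G)\cdot n = 0$ on $\Gamma_0$ and $w=0$ on $\Gamma_1 \cup \Gamma_L$, Green's formula gives $\int_\Omega ((\upsilon_m+G)\cdot \nabla \theta_m) w \, dx = -\int_\Omega \theta_m ((\upsilon_m+G)\cdot \nabla w) \, dx$, which transfers the derivative onto the smooth test function $w$. I would then pass to the limit on the right-hand side: the bound of $(\theta_m)$ in $W^{1,q}$ gives $\theta_m \rightharpoonup \theta$ weakly in $L^{q_*}(\Omega)$ (with $q_* = 3q/(3-q)$, the limit being identified through (\ref{conv2})), while (\ref{conv1}) gives $\upsilon_m + G \to \upsilon+G$ strongly in $L^p(\Omega)$, hence $(\upsilon_m+G)\cdot \nabla w \to (\upsilon+G)\cdot \nabla w$ strongly in $L^p(\Omega)$. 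Since $(q_*)' = 3q/(4q-3)$, the compatibility condition is exactly $p \ge (q_*)'$, so that this weak--strong pairing in $L^{q_*}(\Omega) \times L^{(q_*)'}(\Omega)$ converges and $-\int_\Omega \theta_m ((\upsilon_m+G)\cdot \nabla w)\,dx \to -\int_\Omega \theta((\upsilon+G)\cdot \nabla w)\,dx$. The same compatibility ensures $\upsilon+G \in L^{p_*}(\Omega)$ with $p_* \ge q'$, so that $(\upsilon+G)\cdot \nabla \theta \in L^1(\Omega)$ and integrating by parts back for the limit functions (the boundary terms again vanishing) identifies this limit with $\int_\Omega ((\upsilon+G)\cdot \nabla \theta) w \, dx$. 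Assembling the five limits yields the weak heat equation of Problem (P), and together with (\ref{pressure}) this shows that $(\upsilon, \pi, \theta)$ is a solution, which completes the proof.
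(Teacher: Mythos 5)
Your proposal is correct, but it resolves the crux of the theorem by a genuinely different mechanism than the paper. The paper's proof keeps the convection term in the form $\int_\Omega ((\upsilon_m+G)\cdot\nabla\theta_m)w\,dx$ and therefore needs \emph{strong} convergence of the gradients: it re-runs the truncation technique of Proposition \ref{prop3} on the difference of two approximate equations (indices $m_1$, $m_2$) to show that $(\theta_m)_{m\ge1}$ is a Cauchy sequence in $W^{1,q}_{\Gamma_1\cup\Gamma_L}(\Omega)$, the compatibility condition entering through the bound $\Vert(\upsilon_{m_2}-\upsilon_{m_1})\cdot\nabla\theta_{m_2}\Vert_{L^1(\Omega)}\le\Vert\upsilon_{m_2}-\upsilon_{m_1}\Vert_{{\bf L}^{q'}(\Omega)}\Vert\theta_{m_2}\Vert_{1.q}$ (or the ${\bf C}^0(\overline\Omega)$ embedding when $q=1$, $p>3$), so that (\ref{conv4}) forces the Cauchy property; it treats $q=1$ head-on this way. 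You instead transfer the derivative onto the test function by Green's formula (legitimate, by exactly the density arguments the paper itself uses in Propositions \ref{prop2} and \ref{prop3}), so that only the weak $L^{q_*}(\Omega)$ convergence of $(\theta_m)$ and the strong ${\bf L}^p(\Omega)$ convergence of $(\upsilon_m)$ are needed, with the compatibility condition appearing in its sharpest form $p\ge(q_*)'=3q/(4q-3)$ as a duality exponent; the case $q=1$ is handled by the clean observation that for $p>3$ any $\bar q\in(1,3/2)$ is admissible and $W^{1,\bar q}\subset W^{1,1}$. Your route is more elementary — it avoids the Cauchy-sequence estimate entirely and needs no convergence of $\nabla\theta_m$ beyond weak convergence — at the price of two integration-by-parts justifications (for the approximate and for the limit functions). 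The paper's route is heavier but buys a strictly stronger conclusion, namely $\theta_m\to\theta$ strongly in $W^{1,q}_{\Gamma_1\cup\Gamma_L}(\Omega)$, which your argument does not deliver; for the existence statement alone both are sufficient, and your source-term treatment via Pratt's generalized dominated convergence theorem is in fact a more explicit justification of the paper's (\ref{conv6}).
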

 
 \begin{proof}
 Let us prove that the triplet $(\upsilon, \pi, \theta)$ is a solution to the coupled problem (P) i.e. let us prove that 
\begin{eqnarray*}
\begin{array}{ll}
\displaystyle  \int _{\Omega} \bigl( (\upsilon+G) \cdot\nabla\theta \bigr)w\,dx
+ \int _{\Omega} (K \nabla \theta) \cdot \nabla w\,dx 
= \int _{\Omega}2 \mu \bigl(\theta , \upsilon +G, \bigl|D(\upsilon +G) \bigr| \bigr)
\bigl|D(\upsilon +G) \bigr|^p w \,dx 
 \\
\displaystyle 
+\int _{\Omega} r(\theta ) w \,dx + \int_{\Gamma_{0}} \theta^{b} w \,dx' \quad \forall w \in C^1 ({\overline  \Omega}) \ {\rm s.t.}  \ w = 0 \ {\rm on} \ \Gamma_1 \cup \Gamma_L.
\end{array}
\end{eqnarray*}

With Proposition \ref{prop4bis}, by extracting possibly another subsequence, we have 
\begin{eqnarray} \label{conv5}
D( \upsilon_m +G) \to D( \upsilon+G)   \quad \hbox{\rm a.e. in $\Omega$.}
\end{eqnarray}
By using the continuity and uniform boundedness of the mappings $\mu$ and $r$ combined with the convergence properties (\ref{conv3})-(\ref{conv4})-(\ref{conv5}) we obtain that
\begin{eqnarray} \label{conv6}
\displaystyle g_{1/m} ( \theta_m, \upsilon_m)   \to 
2 \mu \bigl(\theta, \upsilon +G,  \bigl| D( \upsilon +G) \bigr| \bigr) \bigl|D(\upsilon + G) \bigr|^p \quad \hbox{\rm strongly in $L^1(\Omega)$} 
\end{eqnarray}
and
\begin{eqnarray} \label{conv7}
\displaystyle r(\theta_m) \to r(\theta) \quad \hbox{\rm strongly in $L^1(\Omega)$}.
\end{eqnarray}
Moreover 
\begin{eqnarray} \label{conv8}
\displaystyle \theta^b_{1/m}  \to \theta^b \quad \hbox{\rm strongly in $L^1(\Gamma_0)$}.
\end{eqnarray}
These properties allow us to pass to the limit in the right hand side of (\ref{heat_m_bis}) for all $w \in C^1 ({\overline  \Omega})$ such that $w=0$ on $\Gamma_1 \cup \Gamma_L$.

\smallskip

As  explained at the beginning of Section \ref{section5} the uniform boundedness of the sequence $(\theta_m)_{m \ge 1}$  in $W^{1,q}_{\Gamma_1 \cup \Gamma_L} (\Omega)$ does not yield  any good convergence property for $(\nabla \theta_m)_{m \ge 1}$ when $q=1$ but we already know that the (sub-)sequence $(\theta_m)_{m \ge 1}$ converges strongly to $\theta$ in $L^{\rho}(\Omega)$ with $\displaystyle \rho = \frac{(\zeta+1) q}{2-q} \in (1, q_*)$ (see (\ref{conv2})). 

We may improve this result and we will show that $(\theta_m)_{m \ge 1}$ converges strongly to $\theta$ in $W^{1,q}_{\Gamma_1 \cup \Gamma_L} (\Omega)$. Indeed let $m_1 \ge 1$ and $m_2 \ge 1$. With (\ref{heat_m}) we have
\begin{eqnarray*}
B (\upsilon_{m_1}, \theta_{m_1}, w) - B (\upsilon_{m_2}, \theta_{m_2}, w) = L_{1/m_1} (\upsilon_{m_1}, \theta_{m_1}, w) - L_{1/m_2} (\upsilon_{m_2}, \theta_{m_2}, w) \quad \forall w \in W^{1,2}_{\Gamma_1 \cup \Gamma_L} (\Omega)
\end{eqnarray*}
i.e.
\begin{eqnarray*}
\begin{array}{ll}
\displaystyle  \int _{\Omega} \bigl( (\upsilon_{m_1}+G) \cdot\nabla (\theta_{m_1} - \theta_{m_2} ) \bigr) w\,dx
+ \int _{\Omega} \bigl( K \nabla (\theta_{m_1} - \theta_{m_2}) \bigr) \cdot \nabla w\,dx 
=  \int _{\Omega} \bigl( (\upsilon_{m_2} - \upsilon_{m_1} ) \cdot\nabla \theta_{m_2} \bigr) w\,dx
 \\
\displaystyle 
+ \int _{\Omega} \bigl( g_{1/m_1} ( \theta_{m_1}, \upsilon_{m_1}) - g_{1/m_2} ( \theta_{m_2}, \upsilon_{m_2}) \bigr) w \, dx  
+\int _{\Omega} \bigl(  r(\theta_{m_1} ) - r(\theta_{m_2} ) \bigr)  w \,dx 
\\
\displaystyle 
\ \ + \int_{\Gamma_{0}} \bigl( \theta^{b}_{1/m_1} - \theta^{b}_{1/m_2} \bigr) w \,dx' 
\quad \forall w \in W^{1,2}_{\Gamma_1 \cup \Gamma_L} (\Omega).
\end{array}
\end{eqnarray*}
By using the same technique as in Proposition \ref{prop3} we obtain
\begin{eqnarray*}
\begin{array}{ll}
\displaystyle k_0 \zeta \int_{\Omega} \frac{\bigl| \nabla (\theta_{m_1} - \theta_{m_2} ) \bigr|^2}{ \bigl( 1 + | (\theta_{m_1} - \theta_{m_2} | \bigr)^{\zeta +1} } \, dx 
\le  \Vert (\upsilon_{m_2} - \upsilon_{m_1} ) \cdot\nabla \theta_{m_2}  \Vert_{L^1 (\Omega)}
\\
\displaystyle
+  \Vert g_{1/m_1} ( \theta_{m_1}, \upsilon_{m_1}) - g_{1/m_2} ( \theta_{m_2}, \upsilon_{m_2}) \bigr) \Vert_{L^1(\Omega)}  
+ \Vert r(\theta_{m_1} ) - r(\theta_{m_2} ) \Vert_{L^1(\Omega)}
+ \Vert \theta^{b}_{1/m_1} - \theta^{b}_{1/m_2} \Vert_{L^1(\Gamma_0)}
\end{array}
\end{eqnarray*}
and recalling that $\displaystyle \zeta  \in \left( 0, \frac{3 -2q}{3-q} \right)$ and  $\displaystyle \rho = \frac{(\zeta+1) q}{2-q} \in (1, q_*)$ we get
\begin{eqnarray} \label{cauchy1}
\Vert \theta_{m_1} - \theta_{m_2}  \Vert_{1.q} \le C_{m_1, m_2} 2^{(\rho-1) \frac{2-q}{2q}} \Bigl( |\Omega|^{\frac{2-q}{2q}} + \Vert \theta_{m_1} - \theta_{m_2}  \Vert_{L^{\rho}(\Omega)}^{\rho \frac{2-q}{2q}} \Bigr) 
\end{eqnarray}
with
\begin{eqnarray} \label{cauchy2}
\begin{array}{ll}
\displaystyle C_{m_1, m_2} =  \frac{1}{ \sqrt{k_0 \zeta}} 
\Bigl( \Vert (\upsilon_{m_2} - \upsilon_{m_1} ) \cdot\nabla \theta_{m_2}  \Vert_{L^1 (\Omega)}
+  \Vert g_{1/m_1} ( \theta_{m_1}, \upsilon_{m_1}) - g_{1/m_2} ( \theta_{m_2}, \upsilon_{m_2}) \bigr) \Vert_{L^1(\Omega)}   \\
\displaystyle 
\qquad \qquad  + \Vert r(\theta_{m_1} ) - r(\theta_{m_2} ) \Vert_{L^1(\Omega)}
+ \Vert \theta^{b}_{1/m_1} - \theta^{b}_{1/m_2} \Vert_{L^1(\Gamma_0)} \Bigr)^{1/2}.
\end{array}
\end{eqnarray}

If $\displaystyle q \in \bigl( 1, 3/2 \bigr)$ we have
\begin{eqnarray*}
\Vert (\upsilon_{m_2} - \upsilon_{m_1} ) \cdot\nabla \theta_{m_2}  \Vert_{L^1 (\Omega)} 
\le \Vert \upsilon_{m_2} - \upsilon_{m_1} \Vert_{{\bf L}^{q'}(\Omega)} \Vert \nabla \theta_{m_2} \Vert_{L^q(\Omega)}
= \Vert \upsilon_{m_2} - \upsilon_{m_1} \Vert_{{\bf L}^{q'}(\Omega)} \Vert  \theta_{m_2} \Vert_{1.q}
\end{eqnarray*}
and $V_{0.div}^p$ is continuously embedded into ${\bf L}^{q'}(\Omega)$ thanks to the compatibility condition $\displaystyle p \ge  \frac{3q}{4q -3 }$. Since $(\theta_m)_{m \ge 1}$ is bounded in $W^{1,q}_{\Gamma_1 \cup \Gamma_L} (\Omega)$
we infer from (\ref{conv4}) that
\begin{eqnarray*}
\lim_{m_1, m_2 \to + \infty} \Vert (\upsilon_{m_2} - \upsilon_{m_1} ) \cdot\nabla \theta_{m_2}  \Vert_{L^1 (\Omega)}  =0.
\end{eqnarray*}
If $q=1$ we have $p>3$ and $V_{0.div}^p$ is continuously embedded into ${\bf C}^0 \bigl( {\overline \Omega} \bigr) $ and we have 
\begin{eqnarray*}
\Vert (\upsilon_{m_2} - \upsilon_{m_1} ) \cdot\nabla \theta_{m_2}  \Vert_{L^1 (\Omega)} 
\le \Vert \upsilon_{m_2} - \upsilon_{m_1} \Vert_{{\bf C}^0 ( {\overline \Omega})} \Vert \nabla \theta_{m_2} \Vert_{L^1(\Omega)} 
= \Vert \upsilon_{m_2} - \upsilon_{m_1} \Vert_{{\bf C}^0 ( {\overline \Omega})} \Vert  \theta_{m_2} \Vert_{1.q}
\end{eqnarray*}
and we obtain once again
\begin{eqnarray*}
\lim_{m_1, m_2 \to + \infty} \Vert (\upsilon_{m_2} - \upsilon_{m_1} ) \cdot\nabla \theta_{m_2}  \Vert_{L^1 (\Omega)}  =0.
\end{eqnarray*}
Hence with (\ref{cauchy1})-(\ref{cauchy2}) and (\ref{conv6})-(\ref{conv7})-(\ref{conv8}) we may conclude that the (sub-)sequence $(\theta_m)_{m \ge 1}$ is a Cauchy sequence in $W^{1,q}_{\Gamma_1 \cup \Gamma_L} (\Omega)$ and with (\ref{conv2}) we infer that
\begin{eqnarray*}
\theta_m \to \theta \quad \hbox{\rm strongly in $W^{1,q}_{\Gamma_1 \cup \Gamma_L} (\Omega)$.}
\end{eqnarray*}
Finally we choose $w \in C^1 ({\overline \Omega})$ such that $w=0$ on $\Gamma_1 \cup \Gamma_L$ as a test-function in (\ref{heat_m_bis}). We can now pass to the limit in all the terms and we get 
\begin{eqnarray*}
\begin{array}{ll}
\displaystyle  \int _{\Omega} \bigl( (\upsilon+G) \cdot\nabla\theta \bigr)w\,dx
+ \int _{\Omega} (K \nabla \theta) \cdot \nabla w\,dx 
= \int _{\Omega}2 \mu \bigl(\theta , \upsilon +G, \bigl|D(\upsilon +G) \bigr| \bigr)
\bigl|D(\upsilon +G) \bigr|^p w \,dx 
 \\
\displaystyle 
+\int _{\Omega} r(\theta ) w \,dx + \int_{\Gamma_{0}} \theta^{b} w \,dx' \quad \forall w \in {\mathcal D} ({\overline  \Omega}) \ {\rm s.t.}  \ w = 0 \ {\rm on} \ \Gamma_1 \cup \Gamma_L.
\end{array}
\end{eqnarray*}
Together with Proposition \ref{prop5} we infer that the triplet $(\upsilon, \pi, \theta)$ is a solution to problem (P).

\end{proof}


\end{document}